\documentclass[10pt]{article}

\usepackage{amssymb,amsthm,amsmath,hyperref,txfonts}

\usepackage{color}

\usepackage{graphicx,float}

\numberwithin{equation}{section}

\newtheorem{thm}{Theorem}[section]
\newtheorem{lem}{Lemma}[section]
\newtheorem{cor}{Corollary}[section]
\newtheorem{prop}{Proposition}[section]
\newtheorem{rem}{Remark}[section]

\newcommand{\beq}{\begin{eqnarray}}
\newcommand{\eeq}{\end{eqnarray}}
\newcommand{\beqno}{\begin{eqnarray*}}
\newcommand{\eeqno}{\end{eqnarray*}}
\newcommand{\be}{\begin{equation}}
\newcommand{\ee}{\end{equation}}
\theoremstyle{definition}

\newcommand{\crl}{\nabla^{\perp}}

\newcommand{\otv}{\operatorname{div}}

\newcommand{\lnorm}[2]{\left\| #1 \right\|_{L^{#2}}}

\usepackage{authblk}
\hypersetup{hidelinks}
\usepackage[utf8]{inputenc}  
\usepackage[T1]{fontenc}
\usepackage{lmodern}         
\input{glyphtounicode}

\allowdisplaybreaks

\begin{document}
\title{\bf Large-bulk-viscosity limit for two-dimensional compressible micropolar fluids with vacuum}

\author{Zijian Du, Sen Yang, Xinglong You}

\date{}
\maketitle
 
\begin{abstract}

We consider the Cauchy problem for the two-dimensional compressible
micropolar fluid system on the torus $\mathbb T^2$. The initial density
is bounded and nonnegative and may contain vacuum, while the initial
velocity and microrotation belong to $H^1(\mathbb T^2)$. For sufficiently
large bulk viscosity $\nu$, we prove the global-in-time existence of
solutions without a smallness assumption on the $H^1$ size of the data,
provided that the initial compressible component is well prepared in the
sense that $\sqrt{\nu}\|\operatorname{div}u_0\|_{L^2}$ is uniformly
bounded. The proof combines the effective viscous flux with a coupled
energy scheme that exploits the vorticity--microrotation dissipation at
both the basic and material-derivative levels. For the linear pressure
law, we also prove uniqueness in the stated solution class by means of a
coupled backward parabolic problem, logarithmic interpolation, and
Osgood's lemma. For fixed initial data with $\operatorname{div}u_0=0$, we also justify the limit $\nu\to\infty$:
every sequence of the corresponding solutions admits a subsequence
converging to a solution of the two-dimensional inhomogeneous
incompressible micropolar system.

\vspace{4mm}

 {\textbf{Keywords:} } Compressible micropolar fluids; vacuum; incompressible limit; large bulk viscosity; large initial data.  

 {\textbf{2020 Mathematics Subject Classification.} Primary 35Q35; Secondary 76N10, 35B25. }

\end{abstract}

\section{Introduction}

\subsection{Background and motivation}

Micropolar fluid theory, introduced by Eringen \cite{Eringen1966},
extends the classical continuum description by assigning an independent
microrotational velocity to the fluid particles and allowing the Cauchy
stress tensor to be nonsymmetric; see also the monograph \cite{Lu1999}.
The model is intended to describe particle-spin effects that are absent
from the Newtonian theory and arise, for example, in drag problems for
micropolar media \cite{Hoffmann2007}. In two space dimensions the
microrotation is scalar, but it remains dynamically coupled to the
transverse part of the velocity. In this paper, we consider the Cauchy
problem on the normalized torus $\mathbb T^2$:
\begin{equation}\label{0}
\begin{cases}
\rho_t+\operatorname{div}(\rho u)=0,\\
(\rho u)_t+\operatorname{div}(\rho u\otimes u)+\nabla P(\rho)
=(\mu+\xi)\Delta u+(\mu+\lambda-\xi)\nabla\operatorname{div}u
-2\xi\nabla^\perp w,\\
(\rho w)_t+\operatorname{div}(\rho u w)+4\xi w
=\mu_2\Delta w+2\xi\nabla^\perp\!\cdot u,
\end{cases}
\qquad (t,x)\in\mathbb R_+\times\mathbb T^2.
\end{equation}
Here $\rho=\rho(t,x)\geq0$, $u=(u_1,u_2)=u(t,x)$, and
$w=w(t,x)$ denote the density, velocity, and microrotation,
respectively. We use
\[
\nabla^\perp w=(-\partial_2w,\partial_1w),\qquad
\nabla^\perp\!\cdot u=\partial_1u_2-\partial_2u_1,
\qquad \nu:=2\mu+\lambda,
\]
and assume throughout that $\mu,\mu_2,\xi,$ and $\nu$ are positive.
The pressure obeys the isentropic law $P(\rho)=\rho^\gamma$ with
$\gamma\geq1$. The system is supplemented with
\begin{equation}\label{eq:IC}
(\rho,u,w)|_{t=0}=(\rho_0,u_0,w_0).
\end{equation}

For sufficiently regular solutions, the total mass and linear momentum
are conserved, while the microrotational momentum satisfies a balance
law with damping:
\begin{equation}\label{con}
\begin{aligned}
&\int_{\mathbb T^2}\rho(t)\,dx=\int_{\mathbb T^2}\rho_0\,dx,\\
&\int_{\mathbb T^2}\rho u(t)\,dx
=\int_{\mathbb T^2}\rho_0u_0\,dx,\\
&\int_{\mathbb T^2}\rho w(t)\,dx
+4\xi\int_0^t\!\int_{\mathbb T^2}w\,dx\,d\tau
=\int_{\mathbb T^2}\rho_0w_0\,dx.
\end{aligned}
\end{equation}
We impose the normalization
\begin{equation}\label{norm}
\int_{\mathbb T^2}\rho_0\,dx=1,
\qquad
\int_{\mathbb T^2}\rho_0u_0\,dx=0.
\end{equation}

The regime studied here is distinguished by a large bulk viscosity
$\nu$. Indeed, the viscous part of the momentum equation can be written
as
\[
(\mu+\xi)(\Delta u-\nabla\operatorname{div}u)
+\nu\nabla\operatorname{div}u.
\]
Thus, increasing $\nu$ directly penalizes only the potential part of
the velocity. It neither gives enhanced dissipation for the solenoidal
part nor removes the coupling of that part with $w$. More precisely,
the terms $-2\xi\nabla^\perp w$ and
$2\xi\nabla^\perp\!\cdot u$ exchange energy between the
microrotation and the vorticity. This interaction persists in the limit
$\nu\to\infty$. Moreover, because the initial density may be nonconstant
and may vanish, the limiting density is transported by a divergence-free
velocity rather than being forced to a positive constant. The natural
limit is therefore an inhomogeneous incompressible micropolar system.

We next recall the results most closely related to the present work.
For incompressible micropolar equations, global strong solutions in
bounded or three-dimensional domains were studied in
\cite{Boldrini2010,Nowakowski2013,Yamaguchi}, and the long-time dynamics
were considered in \cite{Chen2006}. Well-posedness in critical spaces
was established in \cite{Chen2012}, while nonhomogeneous asymmetric
fluids were treated in \cite{Boldrini2003}. Two-dimensional models with
partial or fractional dissipation were investigated in
\cite{Dong2010,Dong2017,Ye2026}; see also \cite{Chamorro2025} for a
partial-regularity result. The long-time alignment of microrotation and
vorticity obtained in \cite{guterres2025} further reflects the
importance of the transverse coupling appearing in \eqref{0}.

For compressible micropolar models, a systematic one-dimensional global
theory was developed in \cite{Mujakovi1,Mujakovi2,Mujakovi3}.
Amirat--Hamdache \cite{Amirat2009} constructed finite-energy weak
solutions for a related compressible magnetic-fluid system containing
an angular-momentum equation. In several space dimensions, global weak
solutions with discontinuous density and vacuum were obtained in
\cite{Chen2015}, and global classical solutions with far-field vacuum
were considered in \cite{Zhang2025}. Heat-conducting compressible
micropolar fluids with far-field vacuum were studied in \cite{liu2025},
whereas \cite{wei2026} addressed a two-dimensional non-isentropic
initial-boundary value problem and derived a blow-up criterion. These
works provide essential existence and regularity results, but they do
not consider a large-bulk-viscosity limit for large data with vacuum.

When $\xi=0$, the density--velocity subsystem of \eqref{0} reduces to
the isentropic compressible Navier--Stokes equations. Their solution
theory includes global classical solutions near a non-vacuum equilibrium
\cite{Matsumura1980}, solutions with discontinuous density
\cite{Hoff1995,Hoff19952}, local strong or classical solutions with
nonnegative density \cite{Cho2004,Cho20042}, and global finite-energy
weak solutions \cite{Feireisl2001,Lions1998}. Global classical
solutions with small total energy but possibly large oscillations and
vacuum were established in \cite{Huang2012}. Of particular relevance
here is the large-bulk-viscosity approach of Danchin--Mucha. They proved
global results for large solutions and justified an incompressible limit
in \cite{R.Danchin1}; their later work \cite{R.Danchin2} allowed bounded
density with discontinuities and vacuum for the linear pressure law.
The corresponding uniform time-weighted estimates were further refined
by Liu--Zhang \cite{liu2026}.

The incompressible limit for compressible micropolar fluids was recently
studied by Han--Wu \cite{Han2025} in a perturbative critical-space
setting without vacuum, under an assumption on the global strong
solution of the limiting system. More recently, Lin--Liu--Zhang \cite{LLZ2026} studied the
combined limits of large bulk viscosity and vanishing angular
viscosity for the two-dimensional compressible micropolar
system on $\mathbb R^2$. Under a positive lower bound for
the density and suitable Sobolev regularity and
well-preparedness assumptions on the initial data, they
established global strong solutions and estimates uniform
in both viscosity parameters.  On the incompressible side,
Danchin--Mucha \cite{R.Danchin5} developed a vacuum theory for the
inhomogeneous Navier--Stokes equations, and
Guo--Qian--Zhang--Zheng \cite{guo2026} established global solutions and
time-weighted regularity for the two-dimensional inhomogeneous
incompressible micropolar system with bounded density. These results do
not supply the estimates, uniform in the bulk viscosity, that are needed
to pass from \eqref{0} with vacuum and large $H^1$ data to the
inhomogeneous incompressible micropolar dynamics.

The purpose of this paper is twofold. First, for every $\gamma\geq1$,
we prove global-in-time existence for sufficiently large $\nu$ when
$0\leq\rho_0\in L^\infty(\mathbb T^2)$ and
$(u_0,w_0)\in H^1(\mathbb T^2)\times H^1(\mathbb T^2)$. Apart from the
well-preparedness condition
\[
\sqrt\nu\,\|\operatorname{div}u_0\|_{L^2}\leq\digamma,
\]
no smallness is imposed on the $H^1$ size of the initial velocity or
microrotation; the required threshold for $\nu$ may depend on the size
of the data. For the linear pressure law, the resulting solution is
unique in the class stated below. Second, for fixed initial data with
$\operatorname{div}u_0=0$, we justify the limit $\nu\to\infty$ toward
\begin{equation}\label{Incom}
\begin{cases}
\eta_t+v\cdot\nabla\eta=0,\\
\eta(v_t+v\cdot\nabla v)-(\mu+\xi)\Delta v+\nabla\Pi
=-2\xi\nabla^\perp w,\\
\eta(w_t+v\cdot\nabla w)+4\xi w
=\mu_2\Delta w+2\xi\nabla^\perp\!\cdot v,\\
\operatorname{div}v=0,
\end{cases}
\qquad (t,x)\in\mathbb R_+\times\mathbb T^2,
\end{equation}
with initial data $(\eta,v,w)|_{t=0}=(\rho_0,u_0,w_0)$. Here $\Pi$
denotes the incompressible pressure. The convergence is subsequential in
the weak and strong topologies specified in Theorem~\ref{result2}. A
quantitative consequence of the uniform estimates is
\[
\|\operatorname{div}u^\nu\|_{L^\infty(\mathbb R_+;L^2)}
+\|\operatorname{div}u^\nu\|_{L^2(\mathbb R_+\times\mathbb T^2)}
\leq C\nu^{-1/2}.
\]

\bigbreak

\noindent\textbf{Notation and
conventions.} 

Before stating the main results, we fix the notation used below. The
two-dimensional torus is normalized so that $|\mathbb T^2|=1$. Unless
another domain is displayed, all spatial integrals and norms are taken
over $\mathbb T^2$. We write $\|\cdot\|_{L^q}$ for the norm of
$L^q(\mathbb T^2)$ and set $H^k=W^{k,2}$. Standard notation is used
for Bochner spaces on time intervals.
For an integrable function $h$, its mean and zero-mean part are denoted by
\[
\bar h:=\int_{\mathbb T^2}h(x)\,dx,
\qquad \widetilde h:=h-\bar h.
\]
The inverse $(-\Delta)^{-1}$ always acts on zero-mean functions. The
Helmholtz projections onto solenoidal and potential vector fields are
denoted by $\mathcal P$ and $\mathcal Q:=I-\mathcal P$, respectively.
For a scalar or vector field $h$, we use
\[
D_t h=\dot h:=\partial_t h+u\cdot\nabla h.
\]
Moreover, $\nabla^\perp h=(-\partial_2h,\partial_1h)$ and
$\nabla^\perp\!\cdot u=\partial_1u_2-\partial_2u_1$. If $\mathcal{A} = (\mathcal{A}_{ij})$ and $\mathcal{B} = (\mathcal{B}_{ij})$
are matrices, then $\mathcal{A} : \mathcal{B} = \sum_{i,j}\mathcal{A}_{ij}\mathcal{B}_{ij}$. The letter $C$
denotes a positive constant which may vary from one occurrence to the
next; any relevant dependence will be indicated.
The pressure potential associated with $P$ is
\begin{equation}\label{potential-energy}
e(\rho):=\rho\int_1^\rho\frac{P(s)-P(1)}{s^2}\,ds.
\end{equation}
Thus $e(1)=e'(1)=0$ and $\rho e''(\rho)=P'(\rho)$. For
$P(\rho)=\rho^\gamma$,
\[
e(\rho)=
\begin{cases}
\rho\log\rho+1-\rho, & \gamma=1,\\[2mm]
\dfrac{\rho^\gamma-\gamma\rho+\gamma-1}{\gamma-1}, & \gamma>1.
\end{cases}
\]
In particular, for every fixed $R>0 $ and $0\leq\rho\leq R$,
$e(\rho)$ is comparable to $|\rho-1|^2$, with constants depending only
on $R$ and $\gamma$.
We shall use the energy
\[
\mathcal E(t):=\int_{\mathbb T^2}
\left(\frac12\rho|u|^2+\frac12\rho|w|^2+e(\rho)\right)dx,
\qquad \mathcal E_0:=\mathcal E(0).
\]
Every sufficiently regular solution satisfies
\begin{align}\label{energy}
\mathcal E(t)&+\int_0^t\Bigl(
\mu\|\nabla^\perp\!\cdot u\|_{L^2}^2
+\nu\|\operatorname{div}u\|_{L^2}^2
+\mu_2\|\nabla w\|_{L^2}^2
+\xi\|\nabla^\perp\!\cdot u-2w\|_{L^2}^2\Bigr)d\tau
=\mathcal E_0.
\end{align}

\bigbreak

We now state our main theorems concerning the global existence, uniqueness for the linear pressure law, and the large-bulk-viscosity limit for the Cauchy problem \eqref{0}--\eqref{eq:IC}.
\begin{thm}\label{result1}
Let $\digamma>0$ be a given  constant  and let the initial data $(\rho_0,u_0,w_0)$ satisfy 
\begin{equation}\label{a1}
\begin{aligned}
0\leq \rho_0(x)\leq \|\rho_0\|_{L^\infty},\quad \|\operatorname{div} u_0\|_{L^2}\leq \digamma\nu^{-\frac{1}{2}}. 
\end{aligned}
\end{equation}
Assume further that $(u_0,w_0)\in H^1(\mathbb{T}^2)\times H^1(\mathbb{T}^2)$. Then there exists a  number $\nu_0>0$, depending only on  $\digamma,\ \gamma,\ \mu,\ \mu_2,\ \xi, \ \mathcal{E}_0,\ \|\rho_0\|_{L^{\infty}},\ \|\nabla u_0\|_{L^2}$  and $\|\nabla w_0\|_{L^2}$, such that for all $\nu \ge \nu_0$\textsuperscript{1},
\footnotetext[1]{Here we find
$\nu_0 = \max\Bigl\{\mu, C\sqrt{\frac{\rho^* \log(e+\rho^*)}{\mu}} P(\rho^*), \frac{P(\rho^*)}{2}, 8\rho^*+4\sqrt{2\rho^*(2\rho^*+\Lambda(\rho^*))}, \frac{16C_0^4}{\log^4 2}\Bigr\}$} 
 the Cauchy problem \eqref{0}--\eqref{eq:IC} admits a global-in-time solution $(\rho,u,w)$ verifying \eqref{con}--\eqref{norm}, \eqref{energy},
 \begin{align*}
 0\leq \rho(t,x) \leq 2e^{\frac{\gamma-1}{\gamma}\mathcal{E}_0 }\|\rho_0\|_{L^\infty},\quad (t,x)\in \mathbb{R}_+\times\mathbb{T}^2,
 \end{align*}
 \begin{equation}\label{1.11}
\begin{cases}
\rho \in C(\mathbb{R}_+; L^p(\mathbb{T}^2)), \ 1 < p < \infty, \quad (\sqrt{\rho}u, \sqrt{\rho}w) \in C(\mathbb{R}_+; L^2(\mathbb{T}^2)); \\[6pt]
 (u, w) \in L^\infty(\mathbb{R}_+; H^1(\mathbb{T}^2)), 
\ (\nabla(\nabla^{\perp}\cdot u), \nabla \mathcal{G}, \Delta w, \sqrt{\rho} \dot{u}, \sqrt{\rho} \dot{w} ) \in L^2(\mathbb{R}_+; L^2(\mathbb{T}^2));\\[6pt]
(\sqrt{\rho t}  \dot{u}, \sqrt{\rho t}  \dot{w}) \in L^\infty_{\operatorname{loc}}(\mathbb{R}_+; L^2(\mathbb{T}^2)), \ \sqrt{t}(\nabla^{\perp}\cdot \dot{u}, \dot{Z},\nabla \dot{w}) \in L^2_{\operatorname{loc}}(\mathbb{R}_+; L^2(\mathbb{T}^2)); \\[6pt]
(\mathrm{div}\, u, \nabla^{\perp}\cdot u) \in L^r_{\operatorname{loc}}(\mathbb{R}_+; L^\infty(\mathbb{T}^2)), \quad r \in (1, 2).
\end{cases}
\end{equation}
If $\gamma=1$, then the solution is unique. Here, $\dot{u} = u_t + u \cdot \nabla u$ is the material derivative, $\mathcal{G} = \nu \mathrm{div}\, u-P(\rho)$ is the effective viscous flux, and $Z=\operatorname{div}u$.
\end{thm}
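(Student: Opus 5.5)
The strategy follows Danchin--Mucha's large-bulk-viscosity scheme, adapted to the micropolar coupling. We first regularize the data, taking $\rho_0^\delta=\rho_0+\delta$ mollified and $(u_0^\delta,w_0^\delta)$ smooth, so that the local strong solution theory for nonvacuum data applies. We then derive a priori estimates that are independent of $\delta$ and of time, and run a continuation (bootstrap) argument on the assumption $\rho\le 2e^{\frac{\gamma-1}{\gamma}\mathcal E_0}\|\rho_0\|_{L^\infty}=:\rho^*$.

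\textbf{Step 1: basic $H^1$ level.} The energy identity \eqref{energy} gives $\mathcal E(t)\le\mathcal E_0$ and dissipation control of $\nu\|\operatorname{div}u\|_{L^2_tL^2}^2$. Next we decompose $u=\mathcal Pu+\mathcal Qu$ and write the momentum equation as $\rho\dot u=(\mu+\xi)\nabla^\perp(\nabla^\perp\!\cdot u)+\nabla\mathcal G-2\xi\nabla^\perp w$, where $\mathcal G=\nu\operatorname{div}u-P(\rho)$. We test this with $\dot u$ and the $w$-equation with $\dot w$. The key point is that the cross terms $-2\xi\nabla^\perp w\cdot\dot u$ and $2\xi\nabla^\perp\!\cdot u\,\dot w$ combine into the time derivative of $\xi\|\nabla^\perp\!\cdot u-2w\|_{L^2}^2$ plus convective remainders. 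This yields a functional
\[
\Phi(t)\approx\mu\|\nabla^\perp\!\cdot u\|_{L^2}^2+\frac{1}{\nu}\|\mathcal G\|_{L^2}^2+\mu_2\|\nabla w\|_{L^2}^2+\xi\|\nabla^\perp\!\cdot u-2w\|_{L^2}^2
\]
with dissipation $\|\sqrt\rho\dot u\|_{L^2}^2+\|\sqrt\rho\dot w\|_{L^2}^2$. The elliptic estimates $\|\nabla\mathcal G\|_{L^2}+\|\nabla(\nabla^\perp\!\cdot u)\|_{L^2}\lesssim\|\sqrt\rho\dot u\|_{L^2}+\|\nabla w\|_{L^2}$ and $\|\Delta w\|_{L^2}\lesssim\|\sqrt\rho\dot w\|_{L^2}+\|w\|_{L^2}+\|\nabla u\|_{L^2}$ hold, using $\|\rho\|_\infty\le\rho^*$. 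The cubic terms such as $\int|\nabla u|^3$ are controlled through Gagliardo--Nirenberg in 2D together with $\|\operatorname{div}u\|_{L^4}\le\nu^{-1}(\|\mathcal G\|_{L^4}+\|P\|_{L^4})$. The well-preparedness $\nu\|\operatorname{div}u_0\|_{L^2}^2\le\digamma^2$ makes $\Phi(0)$ bounded independently of $\nu$. Gronwall then closes with $\int_0^\infty\|\nabla u\|_{L^2}^2$ finite from Step 1's energy bound; the $\mu$-weighted $L^2_t$ bound of $\nabla w$ controls the damping terms.

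\textbf{Step 2: density bound, the main obstacle.} Along particle paths, $\frac{D}{Dt}\log\rho=-\operatorname{div}u=-\frac1\nu(\mathcal G+P(\rho))$. Since $P\ge0$ this gives $\rho(t,x)\le\rho_0\exp(\nu^{-1}\int_0^t\|\mathcal G\|_{L^\infty})$. The difficulty is that the integral runs over all of $\mathbb R_+$. We plan a two-regime estimate. For short times we use $\|\mathcal G\|_{L^\infty}\lesssim\|\mathcal G\|_{L^2}^{1/2}\|\nabla\mathcal G\|_{L^2}^{1/2}\log$-corrections, and for long times time-weighted bounds, obtained by testing the $\dot u,\dot w$ equations with $t\dot u$, $t\dot w$, which give $\sqrt t\|\nabla\dot u\|$ in $L^2$ and hence $\int_1^\infty\|\nabla\mathcal G\|_{L^4}$ decay. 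The mean part of $\mathcal G$ is handled by $\overline{\mathcal G}=-\overline{P}$. With $\nu\ge\nu_0$ the exponent becomes smaller than $\log 2$ plus the factor $\frac{\gamma-1}{\gamma}\mathcal E_0$ coming from the pressure; this is where the thresholds in the footnote (e.g.\ $16C_0^4/\log^42$) arise. This closes the bootstrap. I expect this step, together with the $t$-weighted estimates needed for $\sqrt t(\nabla^\perp\!\cdot\dot u,\dot Z,\nabla\dot w)$ and $L^r_tL^\infty$ of $\nabla u$ ($r<2$ from the $t^{-1/2}$ singularity), to be the hardest.

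\textbf{Step 3: compactness and uniqueness.} The uniform bounds let us pass $\delta\to0$ by Aubin--Lions and Lions' renormalized argument for $\rho$, which give strong $L^p$ convergence of $\rho$ and the continuity classes in \eqref{1.11}. For uniqueness when $\gamma=1$, we use Lagrangian coordinates are unavailable with only $L^r_tL^\infty$ of $\nabla u$ for $r<2$. Instead, following Danchin--Mucha, we estimate $\delta\rho$ in $\dot H^{-1}$ by duality with a coupled backward parabolic problem for $(\phi,\psi)$ adjoint to the linearized $(u,w)$ system. The linear pressure makes $\delta P=\delta\rho$, so the estimate closes linearly. The velocity differences are estimated in $L^2$ with $\rho$ weight, and logarithmic interpolation $\|\nabla u\|_{L^\infty}$-type bounds produce an inequality $X'\le C(t)X\log(e/X)$. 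Osgood's lemma then gives $X\equiv0$.
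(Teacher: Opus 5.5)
Your Steps 1 and 3 are broadly in line with the paper. Step 2 has a genuine gap, and it is the step that makes the result global in time.

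\textbf{Why your density bound fails on $\mathbb R_+$.} Discarding the pressure via $P\ge0$ leaves $D_t\log\rho\le-\nu^{-1}\mathcal G$. But $\overline{\mathcal G}=-\overline P$, and $\overline P=\int\rho^\gamma\,dx\ge(\int\rho\,dx)^\gamma=1$. Hence $\nu^{-1}\int_0^t\|\mathcal G\|_{L^\infty}\,d\tau\ge t/\nu$, and the right-hand side of your bound $\rho\le\rho_0\exp(\nu^{-1}\int_0^t\|\mathcal G\|_{L^\infty}d\tau)$ exceeds $\rho^*$ after a time of order $\nu$, for every fixed $\nu$. Invoking $\overline{\mathcal G}=-\overline P$ does not rescue this; it is exactly what produces the linear growth.

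The fluctuation is no better. The $H^1$-level estimate controls $\tilde{\mathcal G}$ globally only in $L^2(\mathbb R_+;H^1)$. That gives neither $L^\infty_x$ (the 2D endpoint; your logarithmic correction needs $\|\nabla\mathcal G\|_{L^q}$ with $q>2$) nor $L^1_t$ on $\mathbb R_+$. The $t$-weighted estimates you want for large times have $T$-dependent constants and give boundedness, not decay. Even a global $\sqrt t\,\nabla\dot u\in L^2$ would only give $\|\nabla\mathcal G\|_{L^4}\in L^2_t$, not $L^1_t$. These estimates also presuppose the density bound. So your bootstrap closes at best on $[0,T]$ with $\nu_0=\nu_0(T)$, which does not yield a global solution for fixed $\nu\ge\nu_0$. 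Neither the factor $\frac{\gamma-1}{\gamma}\mathcal E_0$ nor the threshold $16C_0^4/\log^42$ comes out of your argument.

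\textbf{The two missing ideas.}
\begin{enumerate}
\item \emph{Keep the pressure; it is the only damping.} Write $D_t\log\rho=-\nu^{-1}(\tilde{\mathcal G}+\tilde P)$. Using $\rho^\gamma-1\ge\gamma\log\rho$ and $\overline P-1=(\gamma-1)\|e\|_{L^1}\le(\gamma-1)\mathcal E_0$, you get damping at rate $\gamma/\nu$ with constant forcing $(\gamma-1)\mathcal E_0/\nu$. The equilibrium of that ODE is exactly $\frac{\gamma-1}{\gamma}\mathcal E_0$.
\item \emph{Never put $\tilde{\mathcal G}$ in $L^\infty$.} Since $\Delta\tilde{\mathcal G}=\partial_t\operatorname{div}(\rho u)+\operatorname{div}\operatorname{div}(\rho u\otimes u)$, transport $\Upsilon=\log\rho-\nu^{-1}(-\Delta)^{-1}\operatorname{div}(\rho u)$ instead. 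The forcing becomes the commutator $\nu^{-1}[u^j,(-\Delta)^{-1}\partial_i\partial_j]\rho u^i$. By Coifman--Lions--Meyer--Semmes its $L^\infty$ norm is bounded by $\|\nabla u\|_{L^{12}}\|\rho u\|_{L^4}$, i.e.\ by $\mathcal D^{1/2}\|\nabla u\|_{L^2}^{1/2}$ up to logarithms and lower-order terms. This lies in $L^{4/3}(\mathbb R_+)$ uniformly in $\nu$ by the $H^1$-level estimate. The extra term $\gamma\nu^{-2}\|(-\Delta)^{-1}\operatorname{div}(\rho u)\|_{L^\infty}\lesssim\nu^{-2}\|\nabla u\|_{L^2}^{1/2}$ lies in $L^4(\mathbb R_+)$.
\end{enumerate}
H\"older's inequality against the kernel $e^{-\gamma(t-\tau)/\nu}$ then gives a bound of size $C_0\nu^{-1/4}$, uniform in $t$. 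That closes the bootstrap on all of $\mathbb R_+$ with $\nu_0$ independent of $T$.

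A smaller point for Step 3: in the paper, the $\dot H^{-1}$ bound on $\delta\rho$ comes from a direct estimate of $\kappa=-(-\Delta)^{-1}\delta\rho$. That estimate uses $\mathcal H^1$--$\mathrm{BMO}$ duality with weight $\|\nabla\bar u\|_{\mathrm{BMO}}$, not $\|\nabla u\|_{L^\infty}$. The coupled backward problem is used only to bound $\sqrt\rho\,\delta u$ and $\sqrt\rho\,\delta w$.
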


The uniform a priori estimates derived in Theorem \ref{result1} allow us to rigorously justify the large bulk-viscosity limit.  They yield subsequential convergence to the
inhomogeneous incompressible micropolar system \eqref{Incom}. The precise statement
is as follows. 
\begin{thm}[Large bulk-viscosity limit]\label{result2}
Fix $\gamma\ge1$ and $\mu,\mu_2,\xi>0$. Assume that
$
0\le\rho_0\in L^\infty(\mathbb T^2)$, $(u_0,w_0)\in H^1(\mathbb T^2)\times H^1(\mathbb T^2)$, and
\[
\int\rho_0\,dx=1,\qquad \int\rho_0u_0\,dx=0,
\qquad \operatorname{div}u_0=0.
\]
For each $\nu\ge\nu_0$, let $(\rho^\nu,u^\nu,w^\nu)$ be a global
solution supplied by Theorem~\ref{result1}. Then every sequence
$\nu_n\to\infty$ contains a subsequence, still denoted by $\nu_n$,
and functions $(\eta,v,w,\Pi)$ such that, for every $T>0$,
\begin{equation*}
\left\{
\begin{aligned}
&\rho^{\nu_n}\overset{*}{\rightharpoonup}\eta
&&\text{in }L^\infty((0,T)\times\mathbb T^2),\\
&(u^{\nu_n},w^{\nu_n})\overset{*}{\rightharpoonup}(v,w)
&&\text{in }L^\infty(0,T;H^1(\mathbb T^2)),\\
&(u^{\nu_n},w^{\nu_n})\to(v,w)
&&\text{in }L^2_{\text{loc}}(0,T;L^2(\mathbb T^2)),\\
&\nabla\mathcal G^{\nu_n}\rightharpoonup-\nabla\Pi
&&\text{in }L^2(0,T;L^2(\mathbb T^2)).
\end{aligned}
\right.
\end{equation*}
The limit $(\eta,v,w,\Pi)$ solves \eqref{Incom} with initial
data $(\rho_0,u_0,w_0)$. Moreover, for a constant $C$ independent
of $\nu$,
\begin{align}\label{div2.11}
\|\operatorname{div}u^\nu\|_{L^\infty(\mathbb R_+;L^2)}
+\|\operatorname{div}u^\nu\|_{L^2(\mathbb R_+\times\mathbb T^2)}
\le C\nu^{-1/2}.
\end{align}
\end{thm}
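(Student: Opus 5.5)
The plan is to take the uniform-in-$\nu$ bounds furnished by (the proof of) Theorem~\ref{result1} as the starting point. With $\operatorname{div}u_0=0$, the hypothesis \eqref{a1} holds for every $\nu$ with any $\digamma>0$. Hence, for $\nu\ge\nu_0$, the threshold and all constants in the a priori estimates depend only on $\gamma,\mu,\mu_2,\xi,\mathcal E_0,\|\rho_0\|_{L^\infty},\|\nabla u_0\|_{L^2},\|\nabla w_0\|_{L^2}$, and not on $\nu$. We therefore have the following bounds uniformly in $\nu$:
\begin{itemize}
\item $0\le\rho^\nu\le C$;
\item $(u^\nu,w^\nu)$ bounded in $L^\infty(\mathbb R_+;H^1)$;
\item $\nabla\mathcal G^\nu$, $\nabla(\nabla^\perp\cdot u^\nu)$, $\Delta w^\nu$, $\sqrt{\rho^\nu}\dot u^\nu$, $\sqrt{\rho^\nu}\dot w^\nu$ bounded in $L^2(\mathbb R_+;L^2)$.
\end{itemize}
The estimate \eqref{div2.11} then comes from the energy identity \eqref{energy}, which gives $\nu\int_0^\infty\|\operatorname{div}u^\nu\|_{L^2}^2\le\mathcal E_0$. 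For the $L^\infty_tL^2$ part I would use the $H^1$-level estimate underlying Theorem~\ref{result1}, in which $\nu\|\operatorname{div}u\|_{L^2}^2$ appears on the left of the differentiated energy, i.e.\ $\sup_t\nu\|\operatorname{div}u^\nu\|_{L^2}^2\le C(1+\nu\|\operatorname{div}u_0\|_{L^2}^2)$. Alternatively, one can write $\nu\operatorname{div}u^\nu=\mathcal G^\nu+P(\rho^\nu)-\overline{P(\rho^\nu)}+\overline{\mathcal G^\nu}$ and note that $\|\mathcal G^\nu-\overline{\mathcal G^\nu}\|_{L^2}$ and $\|P-\bar P\|_{L^2}$ are controlled uniformly; this yields $\|\operatorname{div}u^\nu\|_{L^2}\le C\nu^{-1}$, which is even better.

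Compactness comes next. Weak-$*$ limits $\eta$, $v$, $w$ exist by Banach--Alaoglu, and $\nabla\mathcal G^{\nu_n}\rightharpoonup g$ in $L^2L^2$. Since $\operatorname{div}u^{\nu_n}\to0$ in $L^2L^2$, we get $\operatorname{div}v=0$. The quantity $\nabla\mathcal G$ is a gradient, and its weak limit is a gradient, so we set $g=-\nabla\Pi$.

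For strong convergence of the velocity and microrotation I would use an Aubin--Lions/Lions--Aubin type argument adapted to vacuum. The main difficulty is that only $\sqrt\rho\,\dot u$, not $u_t$, is controlled. I would first show that $\rho^{\nu_n}\to\eta$ in $C([0,T];H^{-1})$ via $\rho_t=-\operatorname{div}(\rho u)$, which is bounded in $L^\infty(0,T;H^{-1})$. Then $(\rho u)_t$ and $(\rho w)_t$ are bounded in $L^2(0,T;H^{-1})$, from the momentum equations with $\nu\nabla\operatorname{div}u-\nabla P=\nabla\mathcal G$. Combined with the $H^1$ bounds, this gives $\rho^{\nu_n}u^{\nu_n}\to\eta v$ weakly and, through the pairing $\int\rho|u|^2$, the convergence $\sqrt{\rho^{\nu_n}}u^{\nu_n}\to\sqrt\eta v$ strongly in $L^2L^2$.

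Strong convergence of $u^{\nu_n}$ itself in $L^2L^2$ is handled through the Helmholtz decomposition $u=\mathcal Pu+\mathcal Qu$. For the potential part, $\|\mathcal Qu^\nu\|_{L^2}\le C\|\operatorname{div}u^\nu\|_{L^2}\to0$; the constant mode is handled by momentum conservation \eqref{norm} together with mass normalization. For the solenoidal part, $\nabla\mathcal Pu=\nabla\nabla^{\perp}\!\cdot u$-controlled quantities are in $L^2H^1$, i.e.\ $\mathcal Pu\in L^2H^2$. For the time derivative I would use the time-weighted bounds $\sqrt t\,\nabla\dot u\in L^2_{loc}$ in the form of an $H^{-1}$ control of $\mathcal P(\rho u)_t$. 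The cleanest route is instead to use the Lions-type lemma: strong convergence of $\rho u$ in $L^2H^{-1}$ together with weak convergence of $u$ in $L^2H^1$ gives $\int\rho|u|^2\to\int\eta|v|^2$, and the same applies to $w$. To recover strong convergence of $u$ itself where $\eta$ vanishes, I would use the uniform $L^2H^2$ bound on $\mathcal Pu^\nu$ and $w^\nu$ (from $\nabla(\nabla^\perp\cdot u),\Delta w\in L^2L^2$) together with an Aubin--Lions bound on $\partial_t\mathcal Pu^\nu$ in $L^1(\delta,T;H^{-s})$. This last bound follows from $u_t=\dot u-u\cdot\nabla u$, the time-weighted estimate $\sqrt t\,\nabla\dot u\in L^2_{loc}L^2$ and Poincaré, with the mean of $\dot u$ controlled by momentum. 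This is the step I expect to be the main obstacle, and it explains the $L^2_{loc}$ in time in the statement.

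Passing to the limit is then standard. The nonlinear terms $\rho u\otimes u$, $\rho uw$ and $\rho w$ converge in the sense of distributions by combining the strong convergence of $(u,w)$ in $L^2_{loc}L^2$ with the weak-$*$ convergence of $\rho$. The pressure-type terms combine into $\nabla\mathcal G^{\nu_n}\rightharpoonup-\nabla\Pi$, the Laplacians pass weakly, and $\nabla^\perp w$, $\nabla^\perp\!\cdot u$ are linear. The continuity equation passes to $\eta_t+\operatorname{div}(\eta v)=0$, i.e.\ $\eta_t+v\cdot\nabla\eta=0$ since $\operatorname{div}v=0$. The initial data are recovered from the $C([0,T];H^{-1})$ convergences.
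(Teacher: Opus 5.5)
Your main line is the paper's proof:
\begin{itemize}
\item uniform bounds from Propositions~\ref{prop1}--\ref{prop3}, with the threshold and constants independent of $\nu$ because $\operatorname{div}u_0=0$;
\item \eqref{div2.11} from \eqref{energy} together with $\nu\|\operatorname{div}u\|_{L^2}^2\le 4\Phi$;
\item Aubin--Lions on $(\delta,T)$, using $u_t=\dot u-u\cdot\nabla u$ and the time-weighted bounds on $\dot u,\dot w$;
\item $\operatorname{div}v=0$ from $\operatorname{div}u^{\nu_n}\to0$, and $-\nabla\Pi$ as the weak limit of $\nabla\mathcal G^{\nu_n}$.
\end{itemize}
Your recovery of the initial data through the $C([0,T];H^{-1})$ limits of $\rho^{\nu_n}$, $\rho^{\nu_n}u^{\nu_n}$, $\rho^{\nu_n}w^{\nu_n}$ is more careful than the paper's sketch. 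The Helmholtz splitting and the Lions pairing are unnecessary detours. Aubin--Lions applies to $u^\nu$ directly, since it is bounded in $L^2(\delta,T;H^1)$. The uniform $L^\infty(0,T;L^2)$ bound then gives $\int_0^\delta\|u^{\nu_n}-v\|_{L^2}^2\,dt\le C\delta$, so the convergence actually holds in $L^2(0,T;L^2)$, not only away from $t=0$. For $\dot w$, use Lemma~\ref{lemma2.2} rather than momentum, since $\int\rho\dot w\,dx=-4\xi\int w\,dx\neq0$.

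Two side claims are not justified and should be dropped; neither is needed for the theorem.

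\emph{The rate $\nu^{-1}$.} First, the ``even better'' rate $\|\operatorname{div}u^\nu\|_{L^2}\le C\nu^{-1}$ does not follow. There is a sign slip: the correct identity is $\nu\operatorname{div}u=\widetilde{\mathcal G}+\widetilde P$. More seriously, nothing in the estimates bounds $\sup_t\|\widetilde{\mathcal G}\|_{L^2}$ uniformly in $\nu$. The functional $\Phi$ only gives $\|\widetilde{\mathcal G}\|_{L^2}^2\le 2\nu\Phi$, which returns exactly $\nu^{-1/2}$. The other route, $\|\widetilde{\mathcal G}\|_{L^2}\lesssim\|\nabla\mathcal G\|_{L^2}\lesssim\|\sqrt\rho\,\dot u\|_{L^2}+\|\nabla w\|_{L^2}$, is only square integrable in time, or bounded after multiplication by $\sqrt t$.

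\emph{Strong convergence of $\sqrt{\rho}\,u$.} Second, the claimed strong convergence $\sqrt{\rho^{\nu_n}}u^{\nu_n}\to\sqrt\eta\,v$ does not follow from $\int\!\!\int\rho^{\nu_n}|u^{\nu_n}|^2\to\int\!\!\int\eta|v|^2$. The cross term requires $\sqrt{\rho^{\nu_n}}\rightharpoonup\sqrt\eta$. By concavity, weak-$*$ convergence of $\rho^{\nu_n}$ only yields a weak limit of $\sqrt{\rho^{\nu_n}}$ that is $\le\sqrt\eta$. Equality would need strong convergence of the density, e.g.\ via renormalization and DiPerna--Lions for the limiting transport equation, which neither you nor the paper establish.
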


\subsection{Difficulties and ideas}
We now summarize the principal analytical difficulties and the ideas
used to overcome them. The first issue is that the large coefficient
$\nu$ controls only $\operatorname{div}u$, whereas the transverse
velocity remains coupled to the microrotation. Consequently, the
velocity and microrotation equations cannot be estimated independently.
At the basic level, adding their energy identities produces the
coercive dissipation
\[
\mu\|\nabla^\perp\!\cdot u\|_{L^2}^2
+\mu_2\|\nabla w\|_{L^2}^2
+\xi\|\nabla^\perp\!\cdot u-2w\|_{L^2}^2.
\]
After differentiating along the flow, the same cancellation yields the
term
\[
\xi\|\nabla^\perp\!\cdot\dot u-2\dot w\|_{L^2}^2.
\]
These two coupled coercive structures control the modes on which the
bulk viscosity provides no direct gain and form the main modification
of the corresponding Navier--Stokes energy method.

The second issue is the possible presence of vacuum. The energy method
directly controls $\sqrt\rho\,\dot u$ and
$\sqrt\rho\,\dot w$, but division by $\rho$ is unavailable. Weighted
Poincar\'e inequalities and the two-dimensional logarithmic estimate of
Desjardins \cite{Desjardins1997} recover the unweighted integrability
needed in the nonlinear estimates. To propagate a uniform upper bound
for the density, we introduce the effective viscous flux
\begin{equation}\label{Gdingyi}
\mathcal G:=\nu\operatorname{div}u-P(\rho)
\end{equation}
and, first for strictly positive approximate solutions, the quantity
\[
\Upsilon:=\log\rho-\nu^{-1}(-\Delta)^{-1}
\operatorname{div}(\rho u).
\]
Its transport equation contains a commutator between the velocity and
a double Riesz transform. Combining the effective-flux bounds with a
Hardy-space commutator estimate \cite{Coifman1993} and the logarithmic
interpolation inequality gives an upper bound for $\rho$ that is
independent of the positive lower bound of the approximate density; see
also \cite{R.Danchin2}. Vacuum can therefore be recovered through the
approximation limit, without applying $\log\rho$ to the limiting density.

The third issue is uniformity as $\nu\to\infty$. Let
$Z=\operatorname{div}u$ and $\dot Z=D_tZ$. The identity
\[
\operatorname{div}\dot u
=\dot Z+\partial_i u_j\,\partial_j u_i
\]
shows that estimating $\operatorname{div}\dot u$ as the primary
acoustic quantity would attach the large factor $\nu$ to a quadratic
term for which no such bound is available. We instead estimate
$\dot Z$ and adapt the time-weighted scheme of Liu--Zhang
\cite{liu2026}. The microrotation equation generates additional
commutator terms of the form $[D_t,\Delta]w$. They are controlled by the
uniform $L_t^4L_x^4$ bound for $\nabla u$, elliptic estimates for $w$,
and the coupled material-derivative dissipation displayed above. This
is why the micropolar problem is not obtained by merely appending a
separate estimate for $w$ to the Navier--Stokes argument.

The fourth issue concerns compactness. The estimates are first proved
for smooth solutions with strictly positive density, with constants
independent of its lower bound, and then passed to initial data allowing
vacuum. For the singular limit, $u^\nu$ and $w^\nu$ must be compactified
simultaneously. The time weights degenerate at $t=0$; hence the
Aubin--Lions lemma is applied on $(\delta,T)$, and the interval
$(0,\delta)$ is handled by the uniform $L_t^\infty L_x^2$ bounds. This
yields strong convergence of both fields in $L^2(0,T;L^2)$, which is
sufficient, together with weak-* compactness of the density, to pass to
the nonlinear terms. The weak limit of $\nabla\mathcal G^\nu$ supplies
the incompressible pressure. No strong convergence of the density and
no convergence rate for the full triple are asserted.

Finally, uniqueness for $P(\rho)=\rho$ requires a coupled argument. A
negative Sobolev estimate for the density difference is combined with
a backward vector--scalar parabolic system for the velocity and
microrotation differences. Hardy--BMO duality, logarithmic
interpolation, and Osgood's lemma then close the estimate
\cite{Bahouri2011,Mucha2010}. The coupling in the backward system is
essential: using only the dual problem from the Navier--Stokes case
would leave the microrotation difference uncontrolled.

\bigbreak

The remainder of the paper is organized as follows. Section~2 collects
the interpolation, weighted Poincar\'e, Hardy--BMO, and elliptic
estimates used below. Section~3 establishes the basic Sobolev estimate,
the uniform upper bound for the density, and the time-weighted estimates
for smooth strictly positive solutions. Section~4 passes to initial
densities with vacuum, proves uniqueness for the linear pressure law,
and justifies the large-bulk-viscosity limit.

\section{Preliminaries and key lemmas}

In this section, we collect several fundamental analytical tools that will be used repeatedly throughout the paper, including standard interpolation inequalities, weighted Poincar\'{e}-type estimates, and properties of Hardy and $\mathrm{BMO}$ spaces on the 2D torus.

We first recall the  Gagliardo-Nirenberg interpolation inequalities on $\mathbb{T}^2$ (see \cite{Nirenberg1959}).
\begin{lem}\label{GNS}
Let $2\leq p<\infty$, $1<q<\infty$ and $2<r<\infty$. For any
$g \in H^1(\mathbb{T}^2)$ and $h \in L^q (\mathbb{T}^2)\cap W^{1,r}(\mathbb{T}^2)$, there exists a positive constant $C$  depending on $p,q,r$ such that
\begin{align}\label{ding2.1}
\|g\|_{L^p} \leq C\|g\|_{L^2}^{\frac{2}{p}}\|\nabla g\|_{L^2}^{\frac{p-2}{p}} + C\|g\|_{L^2},
\end{align}
and
\begin{align}\label{ding2.2}
\|h\|_{L^\infty} \leq C\|h\|_{L^q}^{\frac{q(r-2)}{2r+q(r-2)}}\|\nabla h\|_{L^r}^{\frac{2r}{2r+q(r-2)}} + C\|h\|_{L^2}.
\end{align}
In addition, if $\bar{g} = 0$ or  $\bar{h} = 0$, the last terms on the right-hand side of \eqref{ding2.1}-\eqref{ding2.2} may be omitted.
\end{lem}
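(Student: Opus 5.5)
We reduce both inequalities to the zero-mean case on $\mathbb T^2$, prove them there by scaling-free Sobolev embeddings and real interpolation, and then restore the mean. For general $g\in H^1(\mathbb T^2)$ we write $g=\bar g+\widetilde g$. Since $|\mathbb T^2|=1$, we have $|\bar g|\le\|g\|_{L^2}$, so $\|g\|_{L^p}\le \|\widetilde g\|_{L^p}+\|g\|_{L^2}$. Moreover $\nabla\widetilde g=\nabla g$ and $\|\widetilde g\|_{L^2}\le\|g\|_{L^2}$. It therefore suffices to prove \eqref{ding2.1} without the last term for zero-mean $g$. The same splitting handles $h$ in \eqref{ding2.2}: $|\bar h|\le\|h\|_{L^2}$, and $\|\widetilde h\|_{L^q}\le 2\|h\|_{L^q}$ because $|\bar h|\le\|h\|_{L^q}$. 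This reduction also accounts for the final remark that the lower-order terms may be dropped when $\bar g=0$ or $\bar h=0$.

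For \eqref{ding2.1} with $\bar g=0$, we use the Ladyzhenskaya-type inequality on the torus, which follows from the embedding $\dot H^{s}(\mathbb T^2)\hookrightarrow L^p(\mathbb T^2)$ with $s=1-\frac2p\in[0,1)$. On zero-mean functions this embedding is valid with the homogeneous norm, since the zero Fourier mode is absent and the standard Hardy–Littlewood–Sobolev/Fourier-series argument applies. We then interpolate by Plancherel and H\"older in frequency:
\[
\|g\|_{\dot H^{s}}\le \|g\|_{L^2}^{1-s}\|\nabla g\|_{L^2}^{s},\qquad 1-s=\frac2p .
\]
This gives exactly the exponents $\frac2p$ and $\frac{p-2}{p}$.

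For \eqref{ding2.2} with $\bar h=0$, we first use Morrey's inequality on $\mathbb T^2$ for $r>2$: oscillations satisfy $|h(x)-h(y)|\le C|x-y|^{1-2/r}\|\nabla h\|_{L^r}$. Suppose $M=\|h\|_{L^\infty}$ is attained (approximately) at $x_0$. Then $|h|\ge M/2$ on the ball of radius $\delta=c\,(M/\|\nabla h\|_{L^r})^{r/(r-2)}$ around $x_0$, provided $\delta\le 1/4$. Integrating over this ball gives
\[
\|h\|_{L^q}^q\ge c\,M^q\delta^2=c\,M^{q+\frac{2r}{r-2}}\|\nabla h\|_{L^r}^{-\frac{2r}{r-2}}.
\]
Solving for $M$, we get $M\le C\|h\|_{L^q}^{\theta}\|\nabla h\|_{L^r}^{1-\theta}$ with
\[
\theta=\frac{q}{q+\frac{2r}{r-2}}=\frac{q(r-2)}{2r+q(r-2)},
\]
which matches the stated exponents.

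The main obstacle is the case $\delta>1/4$, where the ball would wrap around the torus. In that case $M\lesssim\|\nabla h\|_{L^r}\cdot(\text{something small})$ fails to follow directly. We instead use zero mean: Poincar\'e–Morrey gives $M\le C\|\nabla h\|_{L^r}$, and also $M\le C\|h\|_{L^q}+C\|\nabla h\|_{L^r}$ taken on the whole torus. The condition $\delta>1/4$ means $M\gtrsim\|\nabla h\|_{L^r}$. Combined with $M\le C\|\nabla h\|_{L^r}$, this yields $M\simeq\|\nabla h\|_{L^r}$, and hence $\|h\|_{L^q}\ge cM$ from the integral over the full torus, where $|h|\ge M/2$ on a set of fixed positive measure. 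It follows that $M\le C\|h\|_{L^q}^{\theta}M^{1-\theta}\lesssim C\|h\|_{L^q}^{\theta}\|\nabla h\|_{L^r}^{1-\theta}$. This closes both cases with constants depending only on $p,q,r$.
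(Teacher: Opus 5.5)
Your proof is correct, but it takes a different route from the paper. The paper gives no argument for this lemma and simply quotes the classical Gagliardo--Nirenberg inequality from Nirenberg's 1959 paper.

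You prove it directly on $\mathbb T^2$:
\begin{itemize}
\item The mean-splitting, with $|\bar g|\le\|g\|_{L^2}$ and $|\bar h|\le\|h\|_{L^1}\le\min\{\|h\|_{L^2},\|h\|_{L^q}\}$ on the unit torus, reduces everything to zero-mean functions.
\item \eqref{ding2.1} then follows from the embedding $\dot H^{1-2/p}\hookrightarrow L^p$ together with Plancherel--H\"older in frequency.
\item \eqref{ding2.2} follows from a Morrey oscillation argument on $B(x_0,\min\{\delta,1/4\})$ with $\delta\simeq(M/\|\nabla h\|_{L^r})^{r/(r-2)}$. This reproduces exactly $\theta=q(r-2)/(2r+q(r-2))$.
\end{itemize}

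What your route buys is transparency about the lower-order terms. It shows that $C\|g\|_{L^2}$ and $C\|h\|_{L^2}$ come only from the mean. So the last clause of the lemma (dropping them when $\bar g=0$ or $\bar h=0$) is actually proved, not left implicit. With Nirenberg's bounded-domain version, that clause would need an extra Poincar\'e-type absorption step. The citation, on the other hand, covers all Gagliardo--Nirenberg exponents at once. Your proof of \eqref{ding2.1} also still relies on the torus form of the fractional Sobolev embedding, which you quote rather than prove.

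One step should be stated more cleanly. When $\delta>1/4$, you only need two facts:
\begin{itemize}
\item $|h|\ge M/2$ on $B(x_0,1/4)$, since $1/4<\delta$; hence $M\le C\|h\|_{L^q}$.
\item $M\le C\|\nabla h\|_{L^r}$, by Morrey's inequality and the fact that the continuous zero-mean $h$ vanishes somewhere.
\end{itemize}
Then $M=M^{\theta}M^{1-\theta}$ closes the estimate. The detour through $M\simeq\|\nabla h\|_{L^r}$ and the bound $M\le C\|h\|_{L^q}+C\|\nabla h\|_{L^r}$ is unnecessary. Also, the set of fixed positive measure is the ball of radius $1/4$, not the whole torus.
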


To bound $L^p$-norms of the velocity field, we employ a Poincar\'{e}-type inequality from \cite{liu2026}.
\begin{lem}\label{lemma2.2}
Assume that $\rho\geq 0$ satisfies, for some constants $M_1$ and $M_2$ and $\theta>1$,
\begin{align}\label{ding2.3}
0 < M_1 \leq \int \rho(x) dx, \quad \int \rho^\theta(x) dx \leq M_2.
\end{align}
Then for any $p \geq 2$, there exists a  constant $C(M_1,M_2, \theta,p)>0$  such that for all $u \in H^1(\mathbb{T}^2)$,
\begin{align}\label{ding2.4}
\|u\|_{L^p}^2 \leq C \left( \|\sqrt{\rho}u\|_{L^2}^2 + \|\nabla u\|_{L^2}^2 \right).
\end{align}
We note that condition \eqref{ding2.3} is automatically satisfied for barotropic pressure laws $P(\rho) = \rho^\gamma$ with $\gamma \geq 1$, provided the density is uniformly bounded on $\mathbb{T}^2$.
\end{lem}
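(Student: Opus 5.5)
The idea is to bound $\|\bar u\|$ using the mass of $\rho$, then control the oscillation $u-\bar u$ by $\nabla u$. Write $u=\bar u+\widetilde u$, so that $\widetilde u$ has zero mean. By Lemma~\ref{GNS} (with the zero-mean version of \eqref{ding2.1}) together with the Poincar\'e inequality on $\mathbb T^2$,
\[
\|\widetilde u\|_{L^p}\le C\|\nabla u\|_{L^2}\quad\text{for every } 2\le p<\infty .
\]
Since $|\mathbb T^2|=1$, we have $\|u\|_{L^p}\le|\bar u|+\|\widetilde u\|_{L^p}$. It therefore suffices to prove
\[
|\bar u|^2\le C\bigl(\|\sqrt\rho u\|_{L^2}^2+\|\nabla u\|_{L^2}^2\bigr).
\]

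For the mean, we multiply $\bar u=u-\widetilde u$ by $\rho$ and integrate. This gives
\[
M_1|\bar u|\le\Bigl|\int\rho\bar u\,dx\Bigr|\le\int\rho|u|\,dx+\int\rho|\widetilde u|\,dx .
\]
The first term is handled by Cauchy--Schwarz:
\[
\int\rho|u|\,dx\le\Bigl(\int\rho\,dx\Bigr)^{1/2}\|\sqrt\rho u\|_{L^2}.
\]
Here $\int\rho\,dx\le\|\rho\|_{L^\theta}\le M_2^{1/\theta}$, using Jensen or H\"older on the unit-measure torus. For the second term we apply H\"older with exponents $\theta$ and $\theta'=\theta/(\theta-1)<\infty$:
\[
\int\rho|\widetilde u|\,dx\le M_2^{1/\theta}\|\widetilde u\|_{L^{\theta'}}\le C\|\nabla u\|_{L^2}.
\]
This step is where $\theta>1$ is needed, because it makes $\theta'$ finite, and $H^1\hookrightarrow L^{\theta'}$ in two dimensions.

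Combining the two bounds gives $|\bar u|\le M_1^{-1}C(M_2,\theta)\bigl(\|\sqrt\rho u\|_{L^2}+\|\nabla u\|_{L^2}\bigr)$. Squaring and adding the estimate for $\widetilde u$ yields \eqref{ding2.4}, with a constant depending only on $M_1,M_2,\theta,p$.

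For the closing remark, \eqref{norm} and \eqref{con} give $\int\rho\,dx=1$, so we may take $M_1=1$. Boundedness of $\rho$ then gives $\int\rho^\theta\,dx\le\|\rho\|_{L^\infty}^{\theta-1}\int\rho\,dx=\|\rho\|_{L^\infty}^{\theta-1}$ for any $\theta>1$, for instance $\theta=2$; alternatively, the energy bound on $e(\rho)$ can be used. The only delicate point in the whole argument is the H\"older step with $\theta'<\infty$, and it is routine.
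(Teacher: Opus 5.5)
Your proof is correct and complete. The splitting $u=\bar u+\widetilde u$, the Sobolev--Poincar\'e bound on $\widetilde u$, and recovering the mean from $M_1|\bar u|\le\int\rho|u|\,dx+\int\rho|\widetilde u|\,dx$ via H\"older in $L^\theta\times L^{\theta'}$ are exactly what is needed. You also correctly read ``$p\ge2$'' as $2\le p<\infty$: for $p=\infty$ the inequality fails in two dimensions, since $H^1(\mathbb T^2)\not\subset L^\infty$.

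The paper gives no argument of its own; it simply quotes the lemma from Liu--Zhang, so there is no step-by-step comparison to make. A compactness/contradiction argument is the other standard way to prove generalized Poincar\'e inequalities of this kind. Compared with that, your direct route has the advantage of an explicit constant, of the form $C(\theta,p)\bigl(1+M_1^{-2}(M_2^{1/\theta}+M_2^{2/\theta})\bigr)$, which makes the dependence on $M_1,M_2$ transparent.

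One small caveat concerns your alternative for the closing remark. The bound on $\int e(\rho)\,dx$ gives $\int\rho^\gamma\,dx\le C$ only when $\gamma>1$; for $\gamma=1$ it yields only an $L\log L$ bound, not $\int\rho^\theta\,dx$ for some $\theta>1$. So in that case the $L^\infty$ bound is genuinely needed, which is precisely what the remark assumes.
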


We now recall the classical \emph{div-curl} estimate (see \cite{Bourguignon1974}), which allows one to bound the full gradient of a vector field using its divergence and scalar vorticity.
\begin{lem}\label{lemma2.3}
Let $k \in \mathbb{N}$ and $1 < p < \infty$. For any $u \in W^{k+1,p}(\mathbb{T}^2)$, there exists a constant $C = C(k, p)>0$ such that
\begin{align}\label{ding2.5}
    \|\nabla u\|_{W^{k,p}} \leq C\left( \|\operatorname{div} u\|_{W^{k,p}} + \|\nabla^{\perp}\cdot u\|_{W^{k,p}} \right).
\end{align}
 \end{lem}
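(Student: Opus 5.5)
The estimate in Lemma~\ref{lemma2.3} is classical; I would prove it on the Fourier side, using that on $\mathbb T^2$ every vector field splits into its mean, a gradient part and a perpendicular-gradient part. First I would reduce to zero-mean fields. The estimate involves only $\nabla u$, and $\nabla u$, $\operatorname{div}u$ and $\nabla^\perp\!\cdot u$ are all unchanged when the constant $\bar u$ is subtracted, so we may assume $\bar u=0$. For such $u$ I would set
\[
\phi:=(-\Delta)^{-1}\bigl(-\operatorname{div}u\bigr),\qquad
\psi:=(-\Delta)^{-1}\bigl(-\nabla^\perp\!\cdot u\bigr).
\]
Both right-hand sides have zero mean, being derivatives of periodic functions, so $(-\Delta)^{-1}$ is well defined on them. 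The candidate representation is then $u=\nabla\phi+\nabla^\perp\psi$.

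To verify this, I would check it mode by mode. For a wave vector $k\neq0$ write $\hat u(k)=\alpha k+\beta k^\perp$; this is possible because $k$ and $k^\perp$ span $\mathbb C^2$. Then $\widehat{\operatorname{div}u}=i|k|^2\alpha$ and $\widehat{\nabla^\perp\!\cdot u}=i|k|^2\beta$. Since $\widehat{(-\Delta)^{-1}f}=|k|^{-2}\hat f$, this gives $\hat\phi=-i\alpha$ and $\hat\psi=-i\beta$, and hence $\widehat{\nabla\phi}+\widehat{\nabla^\perp\psi}=\alpha k+\beta k^\perp=\hat u$. The only thing to double-check here is the sign convention $\nabla^\perp=(-\partial_2,\partial_1)$, which gives $\nabla^\perp\!\cdot\nabla^\perp\psi=\Delta\psi$ and $\nabla^\perp\!\cdot\nabla\phi=0$. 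Differentiating the representation gives
\[
\partial_j u=\partial_j\nabla(-\Delta)^{-1}(-\operatorname{div}u)+\partial_j\nabla^\perp(-\Delta)^{-1}(-\nabla^\perp\!\cdot u).
\]
Each component of the operators $\partial_j\partial_l(-\Delta)^{-1}$ acting on zero-mean functions is a Fourier multiplier with symbol $k_jk_l/|k|^2$, that is, minus a product of two Riesz transforms $R_jR_l$.

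The main ingredient, and the only non-elementary step, is the $L^p$-boundedness of Riesz transforms on $\mathbb T^2$ for $1<p<\infty$. I would invoke it rather than prove it. It follows either from the Calder\'on--Zygmund theory on the torus, or by transferring the Euclidean result through the Marcinkiewicz multiplier theorem, since the symbol $k_jk_l/|k|^2$ is smooth and homogeneous of degree $0$ away from the origin. This yields $\|\nabla u\|_{L^p}\le C(\|\operatorname{div}u\|_{L^p}+\|\nabla^\perp\!\cdot u\|_{L^p})$, which is the case $k=0$.

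For general $k$, I would apply the $k=0$ bound to $\partial^\alpha u$ for each multi-index with $|\alpha|\le k$. This is legitimate because $\partial^\alpha$ commutes with $\operatorname{div}$ and $\nabla^\perp\cdot$, and $\partial^\alpha u\in W^{1,p}$. Summing over $|\alpha|\le k$ gives \eqref{ding2.5} with a constant depending only on $k$ and $p$.
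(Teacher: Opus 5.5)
Your proof is correct. It also does more than the paper, which gives no proof of Lemma~\ref{lemma2.3} and only cites Bourguignon--Brezis. That reference obtains div--curl estimates from elliptic (Hodge-type) regularity on general domains and manifolds, where boundary terms and a lower-order term $\|u\|_{W^{k,p}}$ generally enter.

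Your Fourier-multiplier argument is tailored to $\mathbb T^2$ and gives exactly the homogeneous estimate \eqref{ding2.5}. There is no boundary, and the only harmonic fields are constants, which $\nabla$ annihilates, so your reduction to $\bar u=0$ removes any lower-order term. The explicit splitting $u-\bar u=\nabla\phi+\nabla^\perp\psi$ is also the same Helmholtz/Riesz-transform structure the paper uses later, e.g.\ in \eqref{3.17} and in the $L^{12}$ bound on $\nabla u$ in Proposition~\ref{prop2}. The cost is reliance on the $L^p$ boundedness of $R_jR_l$ on the torus, which is reasonable to treat as classical.

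Two cosmetic points:
\begin{itemize}
\item The operator $\partial_j\partial_l(-\Delta)^{-1}$ has symbol $-k_jk_l/|k|^2$, i.e.\ it equals $R_jR_l$. The symbol $k_jk_l/|k|^2$ you wrote belongs to the composite that includes the minus signs in your definitions of $\phi$ and $\psi$. This does not affect the bound.
\item You use $k$ both for the wave vector and for the Sobolev index.
\end{itemize}
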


The following Beale-Kato-Majda type inequality (see \cite{Huang2011,Kato1986}) will be used to obtain uniform $W^{1,q}$ estimates for the density and the time-integrability
of $\|\nabla u\|_{L^\infty}$.
\begin{lem}
    Let $2< q < \infty$, and suppose $\nabla u \in W^{1,q}$. Then there exists a constant $C=C(q)>0$ such that
    \begin{equation} \label{BKMineq}
        \begin{aligned}
            \lnorm{\nabla u}{\infty} \le C(\lnorm{\otv u}{\infty}+\lnorm{\crl \cdot u}{\infty})\log \left(e+ \lnorm{\nabla^2 u}{q}\right)+C\lnorm{\nabla u}{2}+C.
        \end{aligned}
    \end{equation}
\end{lem}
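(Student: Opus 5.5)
The plan is to reduce \eqref{BKMineq} to a logarithmic Sobolev embedding for the Riesz transforms on $\mathbb T^2$. Write $\nabla u$ as a zero-order Calder\'on--Zygmund operator applied to $(\operatorname{div}u,\nabla^\perp\!\cdot u)$. Since $u$ may have nonzero mean, first set $\widetilde u=u-\bar u$; this changes neither $\nabla u$ nor $\nabla^2u$. On the torus the Fourier symbol of $\partial_j\widetilde u_i$ in terms of $\widehat{\operatorname{div}u}$ and $\widehat{\nabla^\perp\!\cdot u}$ is $-k_ik_j|k|^{-2}\widehat{\operatorname{div}u}$ plus an analogous combination for the curl. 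Hence
\[
\nabla u=\mathcal R_1(\operatorname{div}u)+\mathcal R_2(\nabla^\perp\!\cdot u),
\]
with $\mathcal R_1,\mathcal R_2$ matrices of double Riesz transforms $\partial_i\partial_j(-\Delta)^{-1}$. It therefore suffices to prove, for a zero-order Fourier multiplier $\mathcal R$ with smooth homogeneous symbol and any $f\in L^\infty$ with $\nabla(\mathcal Rf)\in L^q$, the estimate
\[
\|\mathcal R f\|_{L^\infty}\le C\|f\|_{L^\infty}\log\bigl(e+\|\nabla\mathcal Rf\|_{L^q}\bigr)+C\|\mathcal Rf\|_{L^2}+C.
\]
We apply this with $f=\operatorname{div}u$ and $f=\nabla^\perp\!\cdot u$. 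The $L^2$ term is bounded by $\|\nabla u\|_{L^2}$ by $L^2$-boundedness of Riesz transforms, and $\|\nabla\mathcal Rf\|_{L^q}\le C\|\nabla^2u\|_{L^q}$.

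For the key estimate I would use a Littlewood--Paley decomposition $g=\mathcal Rf=\Delta_{-1}g+\sum_{0\le j<N}\Delta_jg+\sum_{j\ge N}\Delta_jg$, where $\Delta_{-1}$ is the projection onto low frequencies $|k|\lesssim 1$. The three pieces are handled as follows.
\begin{enumerate}
\item \textbf{Low frequencies.} There are finitely many Fourier modes, so $\|\Delta_{-1}g\|_{L^\infty}\le C\|g\|_{L^2}$.
\item \textbf{Intermediate frequencies.} Each dyadic block of a zero-order multiplier is bounded on $L^\infty$ uniformly in $j$: the kernel of $\mathcal R\Delta_j$ is a rescaled Schwartz function with uniformly bounded $L^1$ norm, periodized. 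This gives $\|\Delta_jg\|_{L^\infty}\le C\|f\|_{L^\infty}$, and the middle sum is at most $CN\|f\|_{L^\infty}$.
\item \textbf{High frequencies.} By Bernstein, $\|\Delta_jg\|_{L^\infty}\le C2^{2j/q}\|\Delta_jg\|_{L^q}\le C2^{j(2/q-1)}\|\nabla g\|_{L^q}$. Because $q>2$ the series is geometric, so the tail is at most $C2^{-N(1-2/q)}\|\nabla g\|_{L^q}$.
\end{enumerate}

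To conclude, choose $N$ as the smallest integer with $2^{N(1-2/q)}\ge e+\|\nabla g\|_{L^q}$. The tail is then bounded by a constant, and $N\le C(q)\log(e+\|\nabla g\|_{L^q})+1$. Combining the three pieces gives
\[
\|g\|_{L^\infty}\le C\|f\|_{L^\infty}\bigl(1+\log(e+\|\nabla g\|_{L^q})\bigr)+C\|g\|_{L^2}+C.
\]
Since $\log(e+\cdot)\ge1$, the extra $\|f\|_{L^\infty}$ is absorbed into the logarithmic term.

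The only delicate point is the uniform $L^\infty$ bound on each dyadic block of the Riesz transforms on the torus, i.e.\ the uniform $L^1$ bound of the periodized kernels. I would obtain it by the Poisson summation formula: the periodization of an $L^1(\mathbb R^2)$ kernel has $L^1(\mathbb T^2)$ norm at most that of the original kernel. The kernel of $\mathcal R\Delta_j$ on $\mathbb R^2$ is $2^{2j}K(2^j\cdot)$ with $K$ Schwartz, because the symbol is smooth on the annulus, so its $L^1$ norm is independent of $j$. Everything else is routine.
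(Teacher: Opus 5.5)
Your argument is correct, but it takes a different route from the one the paper relies on. The paper gives no proof of this lemma. It cites the works of Huang et al.\ and of Kato, where the estimate is proved in physical space in the Beale--Kato--Majda manner. There $\nabla u$ is written as a singular integral of $(\operatorname{div}u,\nabla^\perp\!\cdot u)$ against second derivatives of the Green function, and the integral is split at two radii. On $|x-y|<\delta$, the cancellation of the kernel together with the H\"older bound $|f(y)-f(x)|\le C|x-y|^{1-2/q}\|\nabla f\|_{L^q}$ gives $C\delta^{1-2/q}\|\nabla^2u\|_{L^q}$. The annulus $\delta\le|x-y|\le1$ gives $C\log(1/\delta)$ times the $L^\infty$ norms of the divergence and curl, and the smooth far part is bounded by $C\|\nabla u\|_{L^2}$. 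Finally one takes $\delta=\min\{1,\|\nabla^2u\|_{L^q}^{-q/(q-2)}\}$.

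Your Littlewood--Paley proof is the frequency-space counterpart of that argument, with $2^{-N}$ playing the role of $\delta$:
\begin{itemize}
\item the Bernstein tail $j\ge N$ replaces the near field;
\item the $N$ intermediate blocks produce the logarithm exactly as the annulus does;
\item $\Delta_{-1}$ replaces the far field.
\end{itemize}
On $\mathbb T^2$ your version needs no explicit periodic Green function or kernel cancellation. All torus-specific input reduces to the uniform $L^1$ bound for periodized dyadic kernels, which you justify correctly. It also gives a sharper estimate for free: since $\Delta_j\mathcal R f=\mathcal R\widetilde\Delta_j\Delta_j f$, the factor $\|f\|_{L^\infty}$ can be replaced by $\sup_{j\ge0}\|\Delta_jf\|_{L^\infty}$, a $B^0_{\infty,\infty}$-type norm. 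The kernel-splitting argument, for its part, carries over more directly to bounded domains, where Fourier series are unavailable.

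One harmless slip: the symbol of the map $\operatorname{div}u\mapsto\partial_ju_i$ is $+k_ik_j|k|^{-2}$, not $-k_ik_j|k|^{-2}$. Only its zero-order homogeneity and smoothness away from $0$ are used, so nothing changes.
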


Finally, we recall the definitions of the $\mathcal{H}^1(\mathbb{T}^2)$  and  $\mathrm{BMO}$ spaces on the torus. Let $(-\Delta)^{-1}$ denote the inverse Laplacian  with zero spatial mean, and let
$\mathcal{R}_i := (-\Delta)^{-1/2}\partial_i$ be the standard Riesz transforms. Following \cite{Coifman1975}, we define
$$
\mathcal{H}^1(\mathbb{T}^2) := \left\{ h \in L^1(\mathbb{T}^2) : \|{h}\|_{\mathcal{H}^1} := \|{h}\|_{L^1} + \|\mathcal{R}_{1}h\|_{L^1}+\|\mathcal{R}_2h\|_{L^1}<\infty,\, \bar{h}=0\right\},
$$
and
\[
\mathrm{BMO}(\mathbb{T}^2) := \left\{ h \in L^1_{\text{loc}}(\mathbb{T}^2) : \|{h}\|_{\mathrm{BMO}} < \infty \right\},
\]
where
\[
\|h\|_{\mathrm{BMO}} := \sup_{\substack{x\in\mathbb{T}^2, r\in(0,1)}} \frac{1}{|\Omega_r(x)|} \int_{\Omega_r(x)} \left| h(y) - \frac{1}{|\Omega_r(x)|} \int_{\Omega_r(x)} h(z)\mathrm{d}z \right| \mathrm{d}y,
\]
with $\Omega_r(x) = \mathbb{T}^2 \cap B_r(x)$ and $B_r(x)$ the ball of radius $r$ centered at $x$.
It is well known that $\mathcal{H}^1(\mathbb{T}^2)$ and $\mathrm{BMO}(\mathbb{T}^2)$ are dual to each other
\cite{Coifman1975}. We also have the following result from \cite{Coifman1993,Mucha2010}.
\begin{lem}
    Let $u, v \in L^2(\mathbb{T}^2)$ satisfy $\nabla^\perp\cdot u = 0$ and $\mathrm{div}\,v = 0$ in the sense of distributions. Then  $u \cdot v \in \mathcal{H}^1(\mathbb{T}^2)$ and
\begin{align}\label{BMO}
\|u \cdot v\|_{\mathcal{H}^1} \leq \|u\|_{L^2}\|v\|_{L^2}.
\end{align}
\end{lem}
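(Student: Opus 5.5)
The plan is to reduce the claim to the Coifman--Lions--Meyer--Semmes div--curl lemma by a Hodge decomposition on $\mathbb T^2$, and to treat the mean (harmonic) parts of $u$ and $v$ separately.

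\textbf{Step 1: Hodge decomposition.} Since $\nabla^\perp\cdot u=0$ on the torus, we can write $u=\bar u+\nabla\varphi$ with $\varphi\in H^1$ and $\bar\varphi=0$. Since $\operatorname{div}v=0$, we can write $v=\bar v+\nabla^\perp\psi$ with $\psi\in H^1$ and $\bar\psi=0$. These are obtained by solving $\Delta\varphi=\operatorname{div}u$ and $\Delta\psi=\nabla^\perp\cdot v$ on mean-zero functions. Plancherel gives the orthogonal splittings
\[
\|u\|_{L^2}^2=|\bar u|^2+\|\nabla\varphi\|_{L^2}^2,
\qquad
\|v\|_{L^2}^2=|\bar v|^2+\|\nabla^\perp\psi\|_{L^2}^2 .
\]
Expanding the product gives
\[
u\cdot v=\bar u\cdot\bar v+\bar u\cdot\nabla^\perp\psi+\bar v\cdot\nabla\varphi+\nabla\varphi\cdot\nabla^\perp\psi .
\]
The two middle terms and the last term (a Jacobian, by integration by parts) have zero mean. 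Hence $\overline{u\cdot v}=\bar u\cdot\bar v$.

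\textbf{Step 2: the mean condition.} Membership in $\mathcal H^1(\mathbb T^2)$ as defined requires $\overline{u\cdot v}=0$, which by Step~1 is exactly $\bar u\cdot\bar v=0$. This is not implied by the stated hypotheses: for $u=(1,0)$ and $v=(1,0)$, both curl-free and divergence-free, $u\cdot v\equiv1\notin\mathcal H^1$. So the lemma as worded cannot hold for all such pairs. What the argument below proves is the statement with this condition added, i.e.\ assuming $\bar u\cdot\bar v=0$, which holds in particular whenever $\bar u=0$ or $\bar v=0$. This is the situation in the uniqueness argument, where one factor is a gradient of a periodic potential and so has zero mean. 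I would record this restriction explicitly; I see no way to prove the statement without it.

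\textbf{Step 3: the linear terms.} For $\bar u\cdot\nabla^\perp\psi$ and $\bar v\cdot\nabla\varphi$, use that $|\mathbb T^2|=1$, that the Riesz transforms are bounded on $L^2$ (with norm at most $1$ by Plancherel), and that $\|\cdot\|_{L^1}\le\|\cdot\|_{L^2}$. This shows that every mean-zero $L^2$ function lies in $\mathcal H^1$ with norm at most $3$ times its $L^2$ norm. These terms are therefore bounded by a multiple of $|\bar u|\,\|\nabla\psi\|_{L^2}+|\bar v|\,\|\nabla\varphi\|_{L^2}$.

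\textbf{Step 4: the Jacobian term (main obstacle).} The hardest step is $J=\nabla\varphi\cdot\nabla^\perp\psi=\partial_1\varphi\,\partial_2\psi-\partial_2\varphi\,\partial_1\psi$. I would follow \cite{Coifman1993}, transplanted to the torus as in \cite{Mucha2010}. First establish the pointwise bound on the grand maximal function $\mathcal M J(x)\lesssim \bigl(M|\nabla\varphi|^{s}\bigr)^{1/s}(x)\,\bigl(M|\nabla\psi|^{s'}\bigr)^{1/s'}(x)$ for suitable exponents $s,s'<2$. This uses the null structure: $\int J\,\phi_r(x-\cdot)$ is rewritten by moving one derivative onto the test function, then applying Poincar\'e/Sobolev on the ball $B_r(x)$, restricted to scales $r<1$. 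Then apply Hardy--Littlewood maximal bounds in $L^{2/s}$ and $L^{2/s'}$ and the maximal-function characterization of $\mathcal H^1(\mathbb T^2)$ (equivalent to the Riesz-transform norm) to get $\|J\|_{\mathcal H^1}\le C\|\nabla\varphi\|_{L^2}\|\nabla\psi\|_{L^2}$.

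\textbf{Step 5: summation and the constant.} Summing the pieces and using the orthogonal splittings of Step~1 with Cauchy--Schwarz gives $\|u\cdot v\|_{\mathcal H^1}\le C\|u\|_{L^2}\|v\|_{L^2}$ for a universal $C$. The constant $1$ in \eqref{BMO} cannot hold for the norm exactly as defined. For $u=(0,1)$ and $v=\nabla^\perp\psi$ with $\psi=\sin(2\pi x_1)/(2\pi)$, the product is $u\cdot v=\cos(2\pi x_1)$. Its $L^1$ norm plus the $L^1$ norms of its two Riesz transforms (here $\mathcal R_1\cos(2\pi x_1)=\pm\sin(2\pi x_1)$ and $\mathcal R_2\cos(2\pi x_1)=0$) equals $4/\pi>1$. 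Meanwhile $\|u\|_{L^2}\|v\|_{L^2}=1/\sqrt2$, so the ratio is $4\sqrt2/\pi\approx1.8$. Thus what is actually proved is \eqref{BMO} with a universal constant $C$ in place of $1$ (only finiteness matters in the Hardy--BMO duality argument), or equivalently with constant $1$ after renormalizing the $\mathcal H^1$ norm by $C$.
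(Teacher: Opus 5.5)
Your proposal is correct and follows the same route as the paper, which simply invokes the Coifman--Lions--Meyer--Semmes div--curl lemma transplanted to the torus. Your Hodge splitting off of the means $\bar u,\bar v$, followed by the maximal-function bound for the Jacobian $\nabla\varphi\cdot\nabla^\perp\psi$, is exactly that argument made explicit. Your two caveats are also right: as literally stated the lemma fails. For instance $u=v=(1,0)$ gives $\overline{u\cdot v}=1\neq0$, and your $\cos(2\pi x_1)$ example shows that the constant $1$ cannot hold for the Riesz-transform norm. The correct statement therefore needs $\bar u\cdot\bar v=0$ and a universal constant $C$. This is harmless for the paper's use in \eqref{5.35}, where $\nabla u_1$ and $\nabla^\perp u_2$ both have zero mean.
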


In what follows, we also recall the following lemmas from  \cite{R.Danchin2,Desjardins1997}.
\begin{lem}\label{lem:desjardins}
Let $\rho\in L^\infty(\mathbb T^2)$ satisfy $\rho\geq0$ and
 $M:=\int_{\mathbb T^2}\rho\,dx>0$.
Let $u\in H^1(\mathbb T^2)$ satisfy 
\[
\int_{\mathbb T^2}\rho u\,dx=0,
\]
which can be obtained from \eqref{con} and \eqref{norm}. Then there exists a universal constant $C>0$ such that, for every
$c\in\mathbb R$,
\begin{equation}\label{A.9}
\left(\int_{\mathbb T^2}\rho|u|^4\,dx\right)^{\frac{1}{2}}
\leq C\|\sqrt\rho u\|_{L^2}\|\nabla u\|_{L^2}
\log^{\frac{1}{2}}\left(
e+\frac{\|\rho-c\|_{L^2}}{M}
+\frac{\|\rho\|_{L^2}\|\nabla u\|_{L^2}^2}
{\|\sqrt\rho u\|_{L^2}^2}
\right).
\end{equation}
In particular, if $M=1$, then, with $c=1$ and
$\widetilde\rho:=\rho-1$,
\begin{align}\label{Desjardins1997}
\left(\int_{\mathbb T^2}\rho|u|^4\,dx\right)^{\frac{1}{2}}
\leq C\|\sqrt\rho u\|_{L^2}\|\nabla u\|_{L^2}
\log^{\frac{1}{2}}\left(
e+\|\widetilde\rho\|_{L^2}
+\frac{\|\rho\|_{L^2}\|\nabla u\|_{L^2}^2}
{\|\sqrt\rho u\|_{L^2}^2}
\right).
\end{align}
If $\|\sqrt\rho u\|_{L^2}=0$, the estimate is understood by
continuity and is immediate.
\end{lem}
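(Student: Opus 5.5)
The plan is the standard Desjardins argument: split $\rho|u|^4$ at a height $K$, estimating the low part by a weighted $L^2$ quantity and the high part by an unweighted Sobolev bound, then optimise over $K$. Write
\[
\int\rho|u|^4\,dx=\int\rho|u|^2\,|u|^2\,dx ,
\]
set $v=\sqrt\rho\,u$ and $A:=\|\sqrt\rho u\|_{L^2}$, $B:=\|\nabla u\|_{L^2}$, and assume $A>0$. The first step is to control the mean of $u$ using the constraint $\int\rho u\,dx=0$. We have
\[
M\bar u=\int\rho\,\bar u\,dx=-\int\rho\,(u-\bar u)\,dx .
\]
To estimate the right side, subtract the constant $c$: since $\int(u-\bar u)=0$,
\[
\int\rho(u-\bar u)\,dx=\int(\rho-c)(u-\bar u)\,dx .
\]
Hence $M|\bar u|\le \|\rho-c\|_{L^2}\|u-\bar u\|_{L^2}\le C\|\rho-c\|_{L^2}B$. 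This is where the term $\|\rho-c\|_{L^2}/M$ in the logarithm comes from.

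The core step is the logarithmic splitting. For $K\ge1$ decompose
\[
\int\rho|u|^4=\int_{|u|\le K'}\rho|u|^4+\int_{|u|>K'} \rho|u|^4 ,
\]
or, better, work on $\mathbb T^2$ with the Fourier/Littlewood--Paley splitting $u=S_Ku+(u-S_Ku)$. The low-frequency part satisfies
\[
\|S_K u-\bar u\|_{L^\infty}\le C B\sqrt{\log(e+K)},
\]
the Brezis--Gallouet-type bound that uses only $\nabla u\in L^2$ and yields $\sqrt{\log}$. The high-frequency part satisfies $\|u-S_Ku\|_{L^4}\le CK^{-1/2}\|\nabla u\|_{L^2}$ by Bernstein. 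Then
\[
\int\rho|u|^4\le 2\|u\|^2_{L^\infty\text{-low}}\int\rho|u|^2+ \|\rho\|_{L^\infty}\ldots
\]
but $\|\rho\|_{L^\infty}$ is not allowed in the final bound. The high part must therefore instead be bounded by
\[
\int\rho|u|^2|u-S_Ku|^2\le\|\rho\|_{L^2}^{1/2}\cdots ;
\]
I would use Hölder as $\int \rho|u|^2|h|^2\le (\int\rho|u|^4)^{1/2}(\int\rho|h|^4)^{1/2}$ and absorb. Concretely,
\[
\left(\int\rho|u|^4\right)^{1/2}\le\left(\int\rho|S_Ku|^4\right)^{1/2}+\left(\int\rho|u-S_Ku|^4\right)^{1/2}.
\]
The first term is at most
\[
\|S_Ku\|_{L^\infty}\,\|\sqrt\rho S_Ku\|_{L^2},
\]
and $\|\sqrt\rho S_Ku\|_{L^2}\le A+\|\rho\|_{L^2}^{1/2}\|u-S_Ku\|_{L^4}$. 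The second term is at most $\|\rho\|_{L^2}^{1/2}\|u-S_Ku\|_{L^8}^2\le C\|\rho\|_{L^2}^{1/2}K^{-1/2}B^2$, using Bernstein/Sobolev with a $\log$-free power of $K$. Here $\|S_Ku\|_{L^\infty}\le|\bar u|+CB\sqrt{\log(e+K)}$.

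Collecting terms gives
\[
\left(\int\rho|u|^4\right)^{1/2}\le C(|\bar u|+B\sqrt{\log(e+K)})\bigl(A+C\|\rho\|_{L^2}^{1/2}K^{-1/2}B\bigr)+C\|\rho\|_{L^2}^{1/2}K^{-1/2}B^2 .
\]
Choose $K=\|\rho\|_{L^2}B^2/A^2$, floored at $e$, so that $\|\rho\|_{L^2}^{1/2}K^{-1/2}B\le A$. This gives $C(|\bar u|+B\log^{1/2}(e+K))A$. To insert the mean bound, use $|\bar u|\le CB\|\rho-c\|_{L^2}/M$; this is only linear, not logarithmic, so I would refine it. Since $\bar u$ is a constant, $\int\rho|\bar u|^4=M|\bar u|^4$, and one splits $u=\bar u+(u-\bar u)$ from the outset, applying the above to $u-\bar u$. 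The mean then contributes $M^{1/2}|\bar u|^2$. To get $\|\sqrt\rho u\|_{L^2}$ instead, note that $M^{1/2}|\bar u|\le \|\sqrt\rho u\|_{L^2}+\|\sqrt\rho(u-\bar u)\|_{L^2}$. The main obstacle is exactly this bookkeeping of $\bar u$, which produces the $\log(e+\|\rho-c\|_{L^2}/M)$ factor; I would handle it by choosing $K$ to also dominate $\|\rho-c\|_{L^2}/M$, i.e.
\[
K=e+\frac{\|\rho-c\|_{L^2}}{M}+\frac{\|\rho\|_{L^2}B^2}{A^2},
\]
accepting a universal constant. The case $M=1$, $c=1$ in \eqref{Desjardins1997} is then immediate.
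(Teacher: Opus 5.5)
Your frequency splitting is sound, and your first pass already has the right structure. Set $A=\|\sqrt\rho u\|_{L^2}$, $B=\|\nabla u\|_{L^2}$ and $K=\max\{e,\|\rho\|_{L^2}B^2/A^2\}$. Minkowski in $L^4(\rho\,dx)$ costs only a factor $2$. Combined with the Brezis--Gallouet bound for $S_Ku-\bar u$ and the embeddings $\dot H^{1/2}\subset L^4$, $\dot H^{3/4}\subset L^8$ for $u-S_Ku$, it yields
\[
\Bigl(\int\rho|u|^4\,dx\Bigr)^{1/2}\le CA\bigl(|\bar u|+B\log^{1/2}(e+K)\bigr).
\]
The gap is the zero mode. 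You only prove $|\bar u|\le CB\|\rho-c\|_{L^2}/M$. That puts $\|\rho-c\|_{L^2}/M$ linearly in front of $AB$, whereas \eqref{A.9} requires it inside $\log^{1/2}$.

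Neither of your proposed repairs fixes this. Enlarging $K$ is irrelevant: $S_K$ leaves $\bar u$ untouched, and $K$ enters only through the factor $\log^{1/2}(e+K)$ on the nonzero modes. Splitting $u=\bar u+(u-\bar u)$ from the outset is worse, because it destroys the factor $A$. Indeed, $\int\rho u=0$ gives $\|\sqrt\rho(u-\bar u)\|_{L^2}^2=A^2+M|\bar u|^2$. So neither $M^{1/2}|\bar u|^2$ nor the estimate for $u-\bar u$ (which now carries $\|\sqrt\rho(u-\bar u)\|_{L^2}$ in place of $A$) is controlled by $A$. For instance, if $u$ vanishes on the support of $\rho$ but $\bar u\neq0$, then $A=0$ and the left side of \eqref{A.9} is zero, while those pieces are not.

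The missing ingredient is a logarithmic Poincar\'e bound for the mean; this is exactly \eqref{A.3} combined with $|\bar u|\le\|u\|_{L^2}$. To prove it directly, start from $M|\bar u|=\bigl|\int\rho(u-\bar u)\,dx\bigr|\le\|\rho\|_{L^{p'}}\|u-\bar u\|_{L^p}$. Two facts then apply:
\begin{itemize}
\item the sharp two-dimensional growth $\|u-\bar u\|_{L^p}\le C\sqrt p\,B$ (Trudinger--Moser, or Hausdorff--Young on $\mathbb T^2$);
\item since $\rho\ge0$, the interpolation $\|\rho\|_{L^{p'}}\le M^{1-2/p}\|\rho\|_{L^2}^{2/p}$.
\end{itemize}
Together they give $|\bar u|\le C\sqrt p\,(\|\rho\|_{L^2}/M)^{2/p}B$. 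Taking $p=\max\{2,2\log(\|\rho\|_{L^2}/M)\}$ yields $|\bar u|\le CB\log^{1/2}(e+\|\rho\|_{L^2}/M)$. On the unit torus $|c-M|\le\|\rho-c\|_{L^1}\le\|\rho-c\|_{L^2}$, hence $\|\rho\|_{L^2}\le M+2\|\rho-c\|_{L^2}$, which turns this into $|\bar u|\le CB\log^{1/2}(e+\|\rho-c\|_{L^2}/M)$. Inserting this into the display above, both logarithms are dominated by the one in \eqref{A.9}, and the lemma follows. This is the route of Desjardins and Danchin--Mucha; the paper itself only cites the lemma and gives no proof.
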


\begin{lem}
Let $\rho \in L^{p'}(\mathbb{T}^2)$ with
$\frac{1}{p} + \frac{1}{p'} = 1$
and $2 \leq p < +\infty$.
Suppose  $b\in H^1(\mathbb{T}^2)$ satisfies
\begin{equation}
\int \rho b \,dx = 0 \quad \text{and} \quad M := \int \rho \, dx > 0.
\end{equation}
Then there exists  $C_p>0$ (with $C_2 = 1$) such that for any $c\in \mathbb{R}$,
\begin{equation}\label{poincare}
\|b\|_{L^2} \leq \left(1 + \frac{C_p}{M} \|\rho - c\|_{L^{p'}}\right) \|\nabla b\|_{L^2}.
\end{equation}
Moreover,
\begin{equation}\label{A.3}
\|b\|_{L^2} \leq C \log^{\frac{1}{2}} \left( e + \frac{\|\rho - c\|_{L^2}}{M} \right) \|\nabla b\|_{L^2}.
\end{equation}
\end{lem}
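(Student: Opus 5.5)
The plan is to derive \eqref{poincare} by splitting $b$ into its mean and its zero-mean part, and then to obtain \eqref{A.3} by choosing the exponent $p$ in \eqref{poincare} optimally. Write $b=\bar b+\widetilde b$. Since $\int\rho b\,dx=0$,
\[
M\bar b=\bar b\int\rho\,dx=-\int\rho\widetilde b\,dx=-\int(\rho-c)\widetilde b\,dx ,
\]
where the last equality uses $\int\widetilde b\,dx=0$ and holds for any $c\in\mathbb R$. H\"older's inequality then gives $|\bar b|\le M^{-1}\|\rho-c\|_{L^{p'}}\|\widetilde b\|_{L^p}$. Because $|\mathbb T^2|=1$, the functions $\bar b$ and $\widetilde b$ are $L^2$-orthogonal, so $\|b\|_{L^2}\le|\bar b|+\|\widetilde b\|_{L^2}$. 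By the sharp Poincar\'e inequality on the normalized torus (first nonzero eigenvalue $4\pi^2\ge1$), $\|\widetilde b\|_{L^2}\le\|\nabla b\|_{L^2}$. The zero-mean Sobolev embedding $H^1\hookrightarrow L^p$ gives $\|\widetilde b\|_{L^p}\le C_p\|\nabla b\|_{L^2}$. Combining these bounds yields \eqref{poincare}. For $p=2$ we may take $C_2=1$, using the Poincar\'e constant $1$ again.

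For \eqref{A.3}, the key quantitative input is the growth of $C_p$. Since $H^1(\mathbb T^2)$ embeds into the Orlicz space $\exp L^2$ (Trudinger--Moser), one has $\|\widetilde b\|_{L^p}\le C\sqrt p\,\|\nabla b\|_{L^2}$ for all $p\ge2$, with $C$ universal. On the density side, since $p'\le2$ and $|\mathbb T^2|=1$, Jensen's inequality gives $\|\rho-c\|_{L^{p'}}\le\|\rho-c\|_{L^2}$. Hence
\[
\|b\|_{L^2}\le\Bigl(1+C\sqrt p\,\frac{\|\rho-c\|_{L^2}}{M}\Bigr)\|\nabla b\|_{L^2}.
\]

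The choice $p=2$ only gives a bound linear in $X:=\|\rho-c\|_{L^2}/M$. The logarithm must come from a sharper estimate of the mean term, and this is the main obstacle. The approach is to estimate $\int(\rho-c)\widetilde b\,dx$ by a truncation/duality argument instead of plain H\"older.

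\begin{itemize}
\item Split $\widetilde b=\widetilde b\,1_{\{|\widetilde b|\le K\}}+\widetilde b\,1_{\{|\widetilde b|>K\}}$, where $K$ is a level to be chosen.
\item It is convenient to take $c=0$ first, using $\rho\ge0$ and $\int\rho=M$; general $c$ then follows by the same algebra.
\item The low part is bounded by $K\int\rho=KM$, so after dividing by $M$ it contributes $K$.
\item The high part is bounded by $\|\rho\|_{L^2}\,\|\widetilde b\,1_{\{|\widetilde b|>K\}}\|_{L^2}$, and the exponential integrability gives the Gaussian tail $\|\widetilde b\,1_{\{|\widetilde b|>K\}}\|_{L^2}\lesssim\|\nabla b\|_{L^2}\,e^{-cK^2/\|\nabla b\|_{L^2}^2}$.
\end{itemize}

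Choosing $K\sim\|\nabla b\|_{L^2}\log^{1/2}(e+X)$ makes the high part at most of order $\|\nabla b\|_{L^2}$. This yields $|\bar b|\le C\log^{1/2}(e+X)\|\nabla b\|_{L^2}$. Adding $\|\widetilde b\|_{L^2}\le\|\nabla b\|_{L^2}$ then gives \eqref{A.3}.

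If the truncation route does not work cleanly for general $c$, I would fall back on the duality between the Orlicz spaces $\exp L^2$ and $L\log^{1/2}L$. In that case one has to check that the Luxemburg norm of $(\rho-c)/M$ in $L\log^{1/2}L$ is controlled by $\log^{1/2}(e+X)$. This uses $\|\rho/M\|_{L^1}=1$ together with the $L^2$ bound on $\rho-c$.
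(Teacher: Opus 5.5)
The paper gives no proof of this lemma; it quotes it from the works of Danchin--Mucha and Desjardins. Your proof of \eqref{poincare} is the standard one and is correct. Your truncation proof of \eqref{A.3} is also correct, subject to two remarks.

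\textbf{You assume $\rho\ge0$.} The lemma as printed does not state this, but the assumption is necessary, so this is not a defect of your argument. On $\mathbb T^2=\mathbb R^2/\mathbb Z^2$ take $\phi=\sqrt2\cos(2\pi x_1)$, $\rho=1+\lambda\phi$ (so $M=1$), and $b=\phi-\lambda$. Then $\int\rho b\,dx=0$, $\|b\|_{L^2}=\sqrt{1+\lambda^2}$, $\|\nabla b\|_{L^2}=2\pi$ and $\|\rho-1\|_{L^2}=\lambda$. This contradicts \eqref{A.3} with $c=1$ as $\lambda\to\infty$.

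\textbf{The passage from $c=0$ to general $c$ is not ``the same algebra''.} Truncating $\int(\rho-c)\widetilde b\,dx$ directly would bound the low part by $K\int|\rho-c|\,dx$, not by $KM$. What you actually need is
$|c|=\bigl|M-\int(\rho-c)\,dx\bigr|\le M+\|\rho-c\|_{L^2}$.
Hence $\|\rho\|_{L^2}\le M+2\|\rho-c\|_{L^2}$, and therefore $\log(e+\|\rho\|_{L^2}/M)\le 2\log(e+\|\rho-c\|_{L^2}/M)$.

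\textbf{Comparison of routes.} Your opening plan of optimizing $p$ in \eqref{poincare} does work, and it is the natural reading of the ``moreover'' in the statement. You abandoned it only because the Jensen bound $\|\rho-c\|_{L^{p'}}\le\|\rho-c\|_{L^2}$ discards the $L^1$ information. For $c=0$ and $\rho\ge0$, interpolate instead:
$\|\rho\|_{L^{p'}}\le\|\rho\|_{L^1}^{1-2/p}\|\rho\|_{L^2}^{2/p}=M^{1-2/p}\|\rho\|_{L^2}^{2/p}$.
With $C_p\le C\sqrt p$ and $Y:=\|\rho\|_{L^2}/M$, this gives
\[
\|b\|_{L^2}\le\bigl(1+C\sqrt p\,Y^{2/p}\bigr)\|\nabla b\|_{L^2}.
\]
The choice $p=2\log(e+Y)\ge2$ gives $Y^{2/p}\le e$, and \eqref{A.3} follows after the same reduction in $c$.

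Your level-set truncation is the dual form of this argument. Both rest on the exponential integrability of zero-mean $H^1(\mathbb T^2)$ functions, and both use $\rho\ge0$ exactly through $\|\rho\|_{L^1}=M$. The $p$-route is shorter and makes \eqref{A.3} a genuine corollary of \eqref{poincare}: only the growth $C_p\lesssim\sqrt p$ is needed. Your route instead works directly with the Gaussian tail from Trudinger--Moser and bypasses \eqref{poincare} altogether.
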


\bigbreak

\section{Regularity estimates}
\subsection{Sobolev estimates for the velocity and  microrotation}

We establish uniform Sobolev estimates for
 the velocity field $u$ and the micro-rotation  velocity $w$ of 
solutions $(\rho, u, w)$ on $\mathbb{R}_+\times \mathbb{T}^2$. A key feature of our analysis is that all such estimates depend exclusively on a uniform upper bound of the density. Henceforth, we denote $\tilde{P} \coloneqq P - \bar{P}$ and $\tilde{\mathcal{G}} \coloneqq \mathcal{G}- \bar{\mathcal{G}}$, where $\bar{P}$ and $\bar{\mathcal{G}}$ are the spatial averages over  the unit torus  $\mathbb{T}^2$, respectively. We further let $\mathcal{P}$ denote  the Helmholtz projection onto the divergence-free vector fields, $\rho^*$ denote  the upper bound of the density, and define  $P^* := {\max_{0\le r\le\rho^*}P(r)}$. Recalling the definition of \emph{effective viscous flux},  we have
\begin{equation}\label{G}
\tilde{\mathcal{G}} = \nu \operatorname{div} u - \tilde{P}.
\end{equation}
 
Throughout this section, we assume that the pressure $P(\rho)$ satisfies  the $\gamma$-law   and that the density obeys the uniform bound 
\begin{equation}\label{shangquejie}
    \begin{aligned}
        0\leq\rho(t,x)\leq\rho^* \quad \text{for all}\, (t,x)\in[0,T]\times\mathbb{T}^2.
    \end{aligned}
\end{equation}
We further denote the material derivative of $u$ by $\dot{u}$, given by $\dot{u}:= u_t+u\cdot\nabla u$.

\begin{prop}\label{prop1}
Let $(\rho,\,u,\,w)$ be a smooth solution of the Cauchy problem \eqref{0}--\eqref{eq:IC} on $[0,T]\times\mathbb{T}^2$ satisfying  \eqref{con}--\eqref{norm}. Assume the pressure satisfies   $P(\rho) = \rho^{\gamma}$ and   set 
$$\Lambda:= \rho P'(\rho)-P(\rho)=(\gamma-1)P(\rho).$$
Then there exists: \\
-- a functional  $\Phi$ satisfying
\begin{align}\label{3.4}
    \Phi \geq& \frac{1}{2}\int \left(\rho |u|^2+\rho|w|^2+\mu|\nabla^{\perp}\cdot u|^2+\frac{1}{\nu}(\tilde{\mathcal{G}}^2+\tilde{P}^2)\right)dx \nonumber \\
    &+\frac{1}{2}\int\left(\mu_2|\nabla w|^2+\xi|\nabla^{\perp}\cdot u-2w|^2+2e\right)d x,
\end{align}
-- an absolute constant $C>0$,\\
-- a positive constant $\nu_0$ depending only on  $P$,  $\mu$,  $\mu_2$, $\xi$, and $\rho^*$, such that for all $\nu\geq\nu_0$ \textsuperscript{2},
\footnotetext[2]{Here we find
$\nu_0=\max \{\mu,C\sqrt{\frac{\rho^*\log (e+\rho^*)}{\mu}}P^*,8\rho^*+4\sqrt{2\rho^*(2\rho^*+\Lambda(\rho^*))},\frac{P^*}{2}\}$.}
  and all $t\in[0,T]$,
\begin{align}\label{3.5}
  1+\frac{1}{\mu\mathcal E_0}
\left(
\Phi(t)+\int_0^t\mathcal D(\tau)\,d\tau
\right)
\leq{}&
\exp\Bigg\{
\Bigg[
\log\left(1+\frac{\Phi_0}{\mu\mathcal E_0}\right)
+
C\left(
1+\frac{(\rho^*)^2}{\mu_3^3}\mathcal E_0
\log(e+\rho^*)
\right)\frac{\mathcal E_0}{\mu}
\nonumber\\
&+
\frac{2\xi^2}{\rho^*\mu}
\left(
\frac1{\mu_2}+\frac3\mu+\frac2\xi
\right)
\Bigg]
\exp\left(
C\frac{(\rho^*)^2\mathcal E_0^2}{\mu^4}
\right)
\Bigg\},
\end{align}
where \(\mu_3:=\min\{\mu,\mu_2\}\), $ \mathcal{E}_0$  is the initial energy given by \eqref{energy}, and the dissipation functional $\mathcal{D}$ is given by
\begin{align}
    \mathcal{D} := &
\frac{1}{4}\|\sqrt{\rho} \dot{u}\|_{L^2}^2 +\frac{1}{4}\|\sqrt{\rho} \dot{w}\|_{L^2}^2+ \frac{1}{12\rho^*}\mu_2^2\|\Delta w\|_{L^2}^2+\frac{(\mu+\xi)^2}{8\rho^*} \|\nabla(\nabla^{\perp}\cdot u)\|_{L^2}^2 + \frac{1}{16\rho^*} \|\nabla \mathcal{G}\|_{L^2}^2
 \nonumber \\
&+ \frac{1}{4\nu} \|\tilde{P}\|_{L^2}^2+\int \left( \frac{\nu + \Lambda}{2} \right) (\operatorname{div } u)^2dx + \frac{\mu+2\xi}{2} \|\nabla^{\perp}\cdot u\|_{L^2}^2+\frac{\mu}{4}\|\nabla u\|_{L^2}^2+\mu_2\|\nabla w\|_{L^2}^2.
\end{align}

\end{prop}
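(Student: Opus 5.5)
The plan is to follow the Danchin--Mucha / Liu--Zhang $H^1$ energy scheme for the effective viscous flux and make the two micropolar couplings cancel. Test the momentum equation with $\dot u$ and the microrotation equation with $\dot w$, then add the identities.

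\emph{Step 1: testing the momentum equation.} Write the viscous term as
\[
(\mu+\xi)\Delta u+(\mu+\lambda-\xi)\nabla\operatorname{div}u-\nabla P=(\mu+\xi)\nabla^\perp(\nabla^\perp\!\cdot u)+\nabla\tilde{\mathcal G},
\]
up to the sign convention for $\nabla^\perp$. For the flux part, $\int\nabla\tilde{\mathcal G}\cdot\dot u$ is computed with $\operatorname{div}u=(\tilde{\mathcal G}+\tilde P)/\nu$ and the renormalized equation
\[
\tilde P_t+\operatorname{div}(\tilde Pu)+\Lambda\operatorname{div}u=\overline{\Lambda\operatorname{div}u}.
\]
This produces $\frac{1}{2\nu}\frac{d}{dt}\|\tilde{\mathcal G}\|_{L^2}^2$, the term $\frac{1}{2\nu}\frac{d}{dt}\|\tilde P\|_{L^2}^2$, the dissipation $\int(\nu+\Lambda)(\operatorname{div}u)^2$, and commutator terms of the type $\int\tilde{\mathcal G}\,\partial_iu_j\partial_ju_i$. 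The vorticity part produces $\frac{\mu+\xi}{2}\frac{d}{dt}\|\nabla^\perp\!\cdot u\|_{L^2}^2$ plus a cubic term $\int(\nabla^\perp\!\cdot u)\,\nabla u\,\nabla u$.

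\emph{Step 2: testing the microrotation equation.} Testing with $\dot w$ gives $\frac{d}{dt}\bigl(\frac{\mu_2}{2}\|\nabla w\|_{L^2}^2+2\xi\|w\|_{L^2}^2\bigr)$ plus a commutator term $\int\nabla w\cdot\nabla u\cdot\nabla w$. The two coupling terms $-2\xi\int\nabla^\perp w\cdot\dot u$ and $2\xi\int(\nabla^\perp\!\cdot u)\dot w$ should combine, up to commutators, into
\[
-\frac{\xi}{2}\frac{d}{dt}\|\nabla^\perp\!\cdot u-2w\|_{L^2}^2+\xi\|\nabla^\perp\!\cdot u\|_{L^2}^2\text{-type quantities}.
\]
Next add a multiple of the basic energy identity \eqref{energy}. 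This assembles $\Phi$ with the lower bound \eqref{3.4} and a dissipation containing $\|\sqrt\rho\dot u\|_{L^2}^2+\|\sqrt\rho\dot w\|_{L^2}^2$.

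\emph{Step 3: elliptic bounds for the second-order part of $\mathcal D$.} From the momentum equation,
\[
\nabla\tilde{\mathcal G}+(\mu+\xi)\nabla^\perp(\nabla^\perp\!\cdot u)=\rho\dot u+2\xi\nabla^\perp w.
\]
The two terms on the left are $L^2$-orthogonal, so
\[
\|\nabla\mathcal G\|_{L^2}^2+(\mu+\xi)^2\|\nabla(\nabla^\perp\!\cdot u)\|_{L^2}^2\le 2\rho^*\|\sqrt\rho\dot u\|_{L^2}^2+C\xi^2\|\nabla w\|_{L^2}^2.
\]
Similarly, $\mu_2\|\Delta w\|_{L^2}\le\sqrt{\rho^*}\|\sqrt\rho\dot w\|_{L^2}+4\xi\|w\|_{L^2}+2\xi\|\nabla u\|_{L^2}$. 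Together these recover the $\frac{1}{\rho^*}$-weighted terms in $\mathcal D$.

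\emph{Step 4: the cubic terms (main obstacle).} I expect this to be the hardest step, because the cubic terms must be absorbed without smallness of the data. Split $\nabla u=\nabla\mathcal P u+\nabla\mathcal Q u$. The potential part is bounded by $\|\operatorname{div}u\|$, which is $O\bigl(\nu^{-1}(\|\tilde{\mathcal G}\|+\|\tilde P\|)\bigr)$, and this is where the conditions on $\nu_0$ (in terms of $P^*$ and $\Lambda$) are used. For the solenoidal part, use Gagliardo--Nirenberg \eqref{ding2.1} in the form
\[
\|\nabla^\perp\!\cdot u\|_{L^4}^2\le C\|\nabla^\perp\!\cdot u\|_{L^2}\|\nabla(\nabla^\perp\!\cdot u)\|_{L^2},
\]
together with the Hardy--BMO structure \eqref{BMO} for the term $\tilde{\mathcal G}\,\partial_iu_j\partial_ju_i$. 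The same interpolation handles $\nabla w$ via $\|\Delta w\|_{L^2}$. After Young's inequality, the result should be a differential inequality
\[
\frac{d}{dt}\Phi+\mathcal D\le C\frac{\rho^*}{\mu^{2}}\|\nabla u\|_{L^2}^2\,\Phi\Bigl(1+\log(e+\Phi)\Bigr)+\text{(lower order)}.
\]
Here the logarithm comes from Lemma~\ref{lem:desjardins} and \eqref{A.3}, which are used to control $\|u\|_{L^4}$-type and $\rho|u|^4$ terms through $\|\sqrt\rho u\|$ and $\|\nabla u\|$.

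\emph{Step 5: closing.} Since $\int_0^\infty\|\nabla u\|_{L^2}^2\,d\tau\le C\mathcal E_0/\mu$ by \eqref{energy}, a log-Gronwall argument applied to $\log\bigl(1+\Phi/(\mu\mathcal E_0)\bigr)$ gives the double-exponential bound \eqref{3.5}. The explicit constant $\frac{2\xi^2}{\rho^*\mu}\bigl(\cdots\bigr)$ in \eqref{3.5} comes from the linear coupling remainders in Steps 2--3, which are integrated using the energy dissipation.
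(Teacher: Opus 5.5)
Your route is genuinely different from the paper's, and it works, though two steps need correcting.

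\textbf{How the two routes differ.} The paper follows Danchin--Mucha. It effectively tests the viscous, pressure and coupling terms with $u_t$ and $w_t$ (see the identities leading to \eqref{3.7}), so the transport part survives as $\int\rho\dot u\cdot(u\cdot\nabla u)\,dx+\int\rho\dot w\,(u\cdot\nabla w)\,dx$. After Young's inequality this leaves $\|\sqrt\rho\,u\cdot\nabla u\|_{L^2}^2$ and $\|\sqrt\rho\,u\cdot\nabla w\|_{L^2}^2$, which would be superlinear in $\Phi$ without the logarithmic gain of \eqref{Desjardins1997}. It also leaves $\nu^{-1}\int Pu\cdot\nabla\mathcal G\,dx$, which needs \eqref{A.3}. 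This is the origin of the logarithm, the log-Gronwall step and the double exponential in \eqref{3.5}. The auxiliary $k(\rho)$ is there to give $\|\tilde P\|_{L^2}^2$ and $\int\Lambda(\operatorname{div}u)^2dx$ the right sign.

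You instead test with the full $\dot u,\dot w$ (Hoff's scheme). The coupling terms then combine exactly with the $\xi$-part of the vorticity term and the damping term into $\frac{\xi}{2}\frac{d}{dt}\|\nabla^\perp\!\cdot u-2w\|_{L^2}^2$, up to $2\xi\int w^2\operatorname{div}u\,dx$. All other remainders are cubic: $\int(\nabla^\perp\!\cdot u)^2\operatorname{div}u$, $\int\partial_iw\,\partial_iu_j\,\partial_jw$, $\int|\nabla w|^2\operatorname{div}u$, $\nu^{-1}\int\tilde{\mathcal G}^2\operatorname{div}u$ and $\int\tilde{\mathcal G}\,\partial_iu_j\partial_ju_i$. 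Gagliardo--Nirenberg, plus the $\mathcal H^1$--BMO bound for $\int\tilde{\mathcal G}\det\nabla u$ (as the paper does later in \eqref{5.35}), reduce them to $\varepsilon\mathcal D+C(\|\nabla u\|_{L^2}^2+\|w\|_{L^2}^2)\Phi$ plus forcing that is integrable by \eqref{energy}. This is a real simplification: no undifferentiated $u$ appears, so Lemma~\ref{lem:desjardins}, \eqref{A.3} and $k(\rho)$ are unnecessary.

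\textbf{First correction: the pressure terms in Step 1.} With $Z=\operatorname{div}u$, $\nu Z=\tilde{\mathcal G}+\tilde P$ and $P_t+u\cdot\nabla P=-\gamma PZ$, one gets
\[
-\int\tilde{\mathcal G}\,\dot Z\,dx=-\frac{1}{2\nu}\frac{d}{dt}\|\tilde{\mathcal G}\|_{L^2}^2+\frac{1}{2\nu}\int\tilde{\mathcal G}^2Z\,dx+\gamma\int PZ^2\,dx-\frac{\gamma}{\nu}\int\tilde P\,P\,Z\,dx .
\]
So the pressure produces the anti-dissipative term $\int(P+\Lambda)Z^2$, not the dissipation $\int(\nu+\Lambda)Z^2$, and no coercive $\frac{d}{dt}\|\tilde P\|_{L^2}^2$. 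Expanding via $\frac{\nu}{2}\|Z\|_{L^2}^2-\int\tilde PZ$ instead gives $\frac{1}{2\nu}(\|\tilde{\mathcal G}\|_{L^2}^2-\|\tilde P\|_{L^2}^2)$, which has the wrong sign. In your scheme, absorb $\gamma P^*\|Z\|_{L^2}^2$ into the $\nu\|Z\|_{L^2}^2$ from \eqref{energy}, which forces $\nu_0\gtrsim\gamma P^*$. Then get the $\frac1\nu\|\tilde P\|_{L^2}^2$ needed in \eqref{3.4} and in $\mathcal D$ from an extra $\|e\|_{L^1}$ in $\Phi$, using \eqref{3.24} and $\|\tilde P\|_{L^2}^2\le C(\rho^*,\gamma)\|e\|_{L^1}$.

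\textbf{Second correction: Steps 4--5.} These graft the paper's mechanism onto your scheme. Your scheme contains no $\rho|u|^4$ or $\|u\|_{L^4}$ terms, so no logarithm arises. What you actually obtain is $\Phi'+\mathcal D\le a(t)\Phi+b(t)$, with $\|a\|_{L^1}$ and $\|b\|_{L^1}$ controlled by $\mathcal E_0$ and the parameters. This gives a single-exponential bound, uniform in $t$, $T$ and $\nu$. Its parameter dependence differs from \eqref{3.5}, so you obtain the qualitative content used later, not the formula \eqref{3.5} itself.
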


\begin{proof}
The overall strategy follows the approach developed by Danchin--Mucha \cite{R.Danchin2}, and relies on constructing a modified energy functional that captures the $H^1$ regularity of both the velocity and the micro-rotational field, combined with the logarithmic interpolation inequality \eqref{Desjardins1997}.

\bigbreak

We begin by deriving bounds for the quantity
\begin{align}\label{Phi}
    \tilde{\Phi}=\frac{1}{2}\int\left(\mu|\nabla^{\perp}\cdot u|^2+\mu_2|\nabla w|^2+\xi|\nabla^{\perp}\cdot u-2w|^2+\frac{1}{\nu}\left(\tilde{P}^2+\tilde{\mathcal{G}}^2+\rho\int_{\rho}^1\frac{P^2(\tau)}{\tau^2}d\tau\right)\right)dx.
\end{align}
Taking the $ L^2 $ inner product of the momentum equation \eqref{0}$_2$ with the material derivative $\dot{u}$ gives
\begin{align}\label{u_1}
    \int \rho |\dot{u}|^2dx&+\frac{1}{2}\frac{d}{dt}\int \left((\mu+\xi)|\nabla u|^2+(\mu+\lambda-\xi)(\operatorname{div}u)^2\right)\,dx +\int \nabla P\cdot u_t\, dx\nonumber\\
    =&\int (\rho \dot{u})\cdot (u\cdot \nabla u)dx-2\xi\int \nabla^{\perp}w\cdot u_tdx\nonumber\\
     =&\int (\rho \dot{u})\cdot (u\cdot \nabla u)dx+2\xi\int  w(\nabla^{\perp}\cdot u_t)dx.
\end{align}
Similarly, testing the angular momentum equation $\eqref{0}_3$ with $\dot{w}$ yields
\begin{align}\label{w_1}
    \int\rho |\dot{w}|^2dx+\frac{d}{dt}\int \left(\frac{1}{2}\mu_2|\nabla w|^2 + 2\xi|w|^2\right)dx
    =\int(\rho \dot{w})\cdot (u\cdot \nabla w)dx+2\xi\int(\nabla^{\perp}\cdot u)  w_t \, dx.
\end{align}
Then, combining \eqref{u_1} and \eqref{w_1}, we obtain
\begin{align}\label{u_1+w_1}
    \int(\rho |\dot{u}|^2&+\rho |\dot{w}|^2)dx+\frac{1}{2}\frac{d}{dt}\int\left((\mu+\xi)|\nabla u|^2+\mu_2|\nabla w|^2+(\mu+\lambda-\xi)(\operatorname{div}u)^2+4\xi|w|^2\right)dx\nonumber\\
   &+\int\nabla P\cdot u_t dx 
   =\int(\rho \dot{u})\cdot (u\cdot \nabla u)dx+\int(\rho \dot{w})\cdot (u\cdot \nabla w)dx+2\xi\frac{d}{dt}\int(\nabla^{\perp}\cdot u)  w dx.
\end{align}
Using the algebraic identity
$$\int\left(|\nabla^{\perp}\cdot u|^2+4|w|^2-4w(\nabla^{\perp}\cdot{u})\right)dx=\int|\nabla^{\perp}\cdot u-2w|^2dx, $$
we may rewrite \eqref{u_1+w_1}  as
 \begin{align}\label{3.7}
      \int(\rho |\dot{u}|^2&+\rho |\dot{w}|^2)dx+\frac{1}{2}\frac{d}{dt}\int\left(\mu|\nabla^{\perp}\cdot u|^2+\mu_2|\nabla w|^2+\nu(\operatorname{div}u)^2+\xi|\nabla^{\perp}\cdot u-2w|^2\right)dx \nonumber\\
      &+\int\nabla P\cdot u_t dx
    =\int(\rho \dot{u})\cdot (u\cdot \nabla u)dx+\int(\rho \dot{w})\cdot (u\cdot \nabla w)dx,
 \end{align}
where   $\nu=2\mu+\lambda$. For the pressure term on the left-hand side of \eqref{3.7}, we have
\begin{align}\label{wanneng}
   \partial_t P + \operatorname{div}(Pu) + \Lambda \operatorname{div}u = 0,
\end{align}
with $\Lambda:=\rho P'-P$. As in \cite{R.Danchin2}, one can derive
\begin{align*}
\int \nabla P \cdot u_t \,dx
 = &\frac{d}{dt}\int \left( \frac{k -(\bar{P})^2}{\nu} - P\operatorname{div}u \right) dx  - \frac{2}{\nu}\int \tilde{\mathcal{G}} \operatorname{div}u\, \Lambda \,dx\\
  &+ \int (\operatorname{div}u)^2 \Lambda \,dx + \frac{1}{\nu}\int Pu \cdot \nabla \mathcal{G} \,dx,
\end{align*}
where the auxiliary function  $k=k(\rho) = P^2(\rho) - \frac{\rho}{2} \int_{1}^{\rho} \frac{P^2(\varrho)}{\varrho^2} \mathrm{d}\varrho$ solves the ODE
    \[
    k - \rho k' = -\frac{P^2}{2} - 2P \Lambda(\rho) \quad \text{with} \quad k(1) = P^2(1).
    \]
We now introduce the following energy functional
\begin{align}\label{3.12}
    \check{E} \coloneqq \int  \left(
\frac{\mu}{2} |\nabla^{\perp}\cdot u|^2 + \frac{\nu}{2} (\operatorname{div}u)^2 +\frac{\mu_2}{2} |\nabla w|^2+ \frac{1}{2}\xi|\nabla^{\perp}\cdot u-2w|^2+\frac{1}{\nu} \left( k - (\bar{P})^2 \right) - \tilde{P}\operatorname{div}u
\right) dx.
\end{align}
Then combining \eqref{u_1} and \eqref{3.12}  yields
\begin{align}\label{3.13}
\frac{d}{dt}\check{E}+ \int (\rho|\dot u|^2 +\rho|\dot w|^2)d x
+ \int (\operatorname{div}u)^2 \Lambda dx= & \int {(\rho\dot{u})} \cdot (u \cdot \nabla u) dx+ \int {(\rho\dot{w})} \cdot (u \cdot \nabla w) dx
\nonumber \\
&+ \frac{2}{\nu}\int \tilde{\mathcal{G}} \operatorname{div}u \Lambda dx
- \frac{1}{\nu}\int P u \cdot \nabla \mathcal{G} dx.
\end{align}
The equivalence $\tilde{\Phi} \simeq \check{E}$ follows from direct algebraic manipulation. Indeed,
\begin{align*}
\check{E} = &\int \left(
\frac{\mu}{2} |\nabla^{\perp}\cdot u|^2
+\frac{\mu_2}{2} |\nabla w|^2+ \frac{1}{2}\xi|\nabla^{\perp}\cdot u-2w|^2\right)d x\\
&+\int  \frac{1}{2\nu} \left( (\nu \operatorname{div}u)^2 - 2\nu \operatorname{div}u \tilde{P} + 2\left(k - (\bar{P})^2\right) \right)
  dx.
\end{align*}
To control the right-hand side of \eqref{3.13}, we rewrite the momentum equation using  $\mathcal{G} = \nu \operatorname{div} u - P$:
\begin{equation}\label{3.15}
(\mu+\xi) (\Delta u - \nabla \operatorname{div} u) + \nabla \mathcal{G} = \rho \dot{u}+2\xi\nabla^{\perp}w.
\end{equation}
Standard elliptic estimates from the Hodge decomposition give
\begin{align}\label{3.16}
    (\mu+\xi)^2 \|\Delta \mathcal{P} u\|^2_{L^2} + \|\nabla \mathcal{G}\|^2_{L^2} = \left\| \rho \dot{u}+2\xi \nabla^{\perp}w \right\|^2_{L^2}\leq 2\rho^* \left\| \sqrt{\rho} \dot{u} \right\|^2_{L^2}+8\xi^2\|\nabla^{\perp}w\|_{L^2}^2.
\end{align}
Similarly, the angular momentum equation becomes
$$\mu_2\Delta w=\rho\dot{w}+4\xi w-2\xi\nabla^{\perp}\cdot u,$$
so that
\begin{align*}
\mu_2^2\|\Delta w\|_{L^2}^2=\|\rho \dot{w}+4\xi w-2\xi\nabla^{\perp}\cdot u\|_{L^2}^2
\leq 3\left(\rho^*\|\sqrt{\rho} \dot{w}\|_{L^2}^2+16\xi^2\|w\|_{L^2}^2+4\xi^2\|\nabla^{\perp}\cdot u\|_{L^2}^2\right).
\end{align*}
By Young's inequality,
$$
\frac{2}{\nu} \int \tilde{\mathcal{G}} \operatorname{div} u \Lambda  dx
\leq \frac{1}{2} \int (\operatorname{div} u)^2 \Lambda   dx
+ \frac{2}{\nu^2} \int \tilde{\mathcal{G}}^2\Lambda  dx.$$
Combining the above with $\tilde{\Phi}=\check{E}$ and the important identity
\begin{align}\label{tihuan}
    \|\nabla^m(\nabla^{\perp}\cdot u)\|^2_{L^2}=\|\nabla^m(\nabla ^{\perp}\cdot\mathcal{P}u)\|^2_{L^2}, \quad  \quad m=0,1,
\end{align}
 we  conclude
\begin{align*}
&\frac{d}{dt} \tilde{\Phi}
+ \frac{1}{4} \left\| \sqrt{\rho} \dot{u} \right\|^2_{L^2}
+ \frac{(\mu+\xi)^2}{4\rho^*} \left\| \nabla(\nabla^{\perp}\cdot u)\right\|^2_{L^2}
+ \frac{1}{4\rho^*} \left\| \nabla \mathcal{G} \right\|^2_{L^2}
  \\
&+ \frac{1}{4} \left\| \sqrt{\rho} \dot{w} \right\|^2_{L^2}+\frac{\mu_2^2}{6\rho^*} \|\Delta w\|^2_{L^2}+ \frac{1}{2} \int  (\operatorname{div} u)^2 \Lambda \, dx \\
\leq&
\frac{2}{\nu^2} \int \tilde{\mathcal{G}}^2\Lambda \, dx
- \frac{1}{\nu} \int  P\,u \cdot \nabla \mathcal{G} \, dx
+ \left\| \sqrt{\rho} u \cdot \nabla w \right\|^2_{L^2}\\
&+\frac{2\xi^2}{\rho^*}\|\nabla^{\perp} w\|^2_{L^2}+\frac{8\xi^2}{\rho^*}\|w\|^2_{L^2}+\frac{2\xi^2}{\rho^*}\|\nabla^{\perp}\cdot u\|^2_{L^2}+ \left\| \sqrt{\rho} u \cdot \nabla u \right\|^2_{L^2}.
\end{align*}
To estimate the convective terms, we use the decomposition
\begin{equation}\label{3.17}
\nabla u = \nabla \mathcal{P} u - \frac{1}{\nu} \nabla^2 (-\Delta)^{-1} \tilde{\mathcal{G}} - \frac{1}{\nu} \nabla^2 (-\Delta)^{-1} \tilde{P},
\end{equation}
which leads to
\begin{align}\label{3.18}
&\frac{d}{dt} \tilde{\Phi}
+ \frac{1}{4} \left\| \sqrt{\rho} \dot{u} \right\|^2_{L^2}
+ \frac{(\mu+\xi)^2}{4\rho^*} \left\| \nabla(\nabla^{\perp}\cdot u)\right\|^2_{L^2}
+ \frac{1}{4\rho^*} \left\| \nabla \mathcal{G} \right\|^2_{L^2}
 \nonumber \\
&\quad+ \frac{1}{4} \left\| \sqrt{\rho} \dot{w} \right\|^2_{L^2}+\frac{\mu_2^2}{6\rho^*} \|\Delta w\|^2_{L^2}+ \frac{1}{2} \int  (\operatorname{div} u)^2\Lambda \, dx \nonumber \\
\leq&
\frac{2}{\nu^2} \int_{\mathbb{T}^2} \tilde{\mathcal{G}}^2\Lambda \, dx
- \frac{1}{\nu} \int_{\mathbb{T}^2} P\,u \cdot \nabla \mathcal{G} \, dx
+ \left\| \sqrt{\rho} u \cdot \nabla w \right\|^2_{L^2}\nonumber  \\
&+\frac{2\xi^2}{\rho^*}\|\nabla^{\perp} w\|^2_{L^2}+\frac{8\xi^2}{\rho^*}\|w\|^2_{L^2}+\frac{2\xi^2}{\rho^*}\|\nabla^{\perp}\cdot u\|^2_{L^2}\nonumber\\
&+ 3\left(\|\sqrt{\rho}u\cdot\nabla \mathcal{P}u\|^2_{L^2}+\frac{1}{\nu^2}\|\sqrt{\rho}u\cdot\nabla^2(-\Delta)^{-1}\tilde{\mathcal{G}}\|^2_{L^2}+\frac{1}{\nu^2}\|\sqrt{\rho}u\cdot\nabla^2(-\Delta)^{-1}\tilde{P}\|^2_{L^2}\right).
\end{align}

\bigbreak

We proceed to bound the right-hand side of  \eqref{3.18}. Following an argument similar to that in \cite{R.Danchin2} and combining  Lemma \ref{GNS},  \eqref{Desjardins1997}, \eqref{A.3} and \eqref{tihuan}, we obtain the following estimates:
\begin{align*} 
    3 \int \rho \bigl| u \cdot \nabla \mathcal{P}u \bigr|^2 dx
\leq &\frac{(\mu+\xi)^2}{8 \rho^*} \|\nabla(\nabla^{\perp}\cdot u)\|_{L^2}^2 \nonumber\\
&+ \frac{C (\rho^*)^2}{(\mu+\xi)^2} \, \bigl\| \sqrt{\rho} u \bigr\|_{L^2}^2 \|\nabla u\|_{L^2}^2 \|\nabla^{\perp}\cdot u\|_{L^2}^2   \log \left( e + \|\tilde{\rho}\|_{L^2}+ \frac{\|\rho\|_{L^2} \|\nabla u\|_{L^2}^2}{\bigl\| \sqrt{\rho}u \bigr\|_{L^2}^2} \right),
\end{align*}
\begin{align*} 
     \int &\rho \bigl| u \cdot \nabla w \bigr|^2 dx\leq \frac{\mu_2^2}{12\rho^*} \|\Delta w\|_{2}^2 + \frac{C (\rho^*)^2}{\mu_2^2} \, \bigl\| \sqrt{\rho} u \bigr\|_{2}^2 \|\nabla u\|_{2}^2 \|\nabla w\|_{2}^2
\log \left( e + \|\tilde{\rho}\|_{2} + \frac{\|\rho\|_{2} \|\nabla u\|_{2}^2}{\bigl\| \sqrt{\rho}u \bigr\|_{2}^2} \right),
\end{align*}
\begin{align*} 
 \frac{3}{\nu^2} \int \rho \left| u \cdot \left[ \nabla^2(-\Delta)^{-1} \tilde{\mathcal{G}} \right] \right|^2 dx
\leq& \frac{C (\rho^*)^2}{\nu^4} \, \bigl\| \sqrt{\rho} u \bigr\|_{L^2}^2 \|\nabla u\|_{L^2}^2 \|\tilde{\mathcal{G}}\|_{L^2}^2
\log \left( e + \|\tilde{\rho}\|_{L^2} + \frac{\|\rho\|_{L^2} \|\nabla u\|_{L^2}^2}{\bigl\| \sqrt{\rho} u \bigr\|_{L^2}^2} \right)\nonumber\\
&+ \frac{1}{16 \rho^*} \|\nabla \mathcal{G}\|_{L^2}^2,
\end{align*}
\begin{align*} 
 \frac{3}{\nu^2} \int \rho \bigl| u \cdot \nabla^2 (-\Delta)^{-1} \tilde{P} \bigr|^2 dx
\leq &  \frac{C (\rho^*)^2}{\nu^3} \, \bigl\| \sqrt{\rho} u \bigr\|_{L^2}^2 \|\nabla u\|_{L^2}^2
\log \left( e + \|\tilde{\rho}\|_{L^2} + \frac{\|\rho\|_{L^2} \|\nabla u\|_{L^2}^2}{\bigl\| \sqrt{\rho} u \bigr\|_{L^2}^2} \right) \|\tilde{P}\|_{L^{\infty}}^2\nonumber\\
&+\frac{1}{4 \nu} \|\tilde{P}\|_{L^2}^2,
\end{align*}
and
\begin{equation*} 
- \frac{1}{\nu} \int  P u\cdot \nabla \mathcal{G} \, dx
\leq  C \frac{\rho^*}{\nu^2} (P^*)^2 \|\nabla u\|_{L^2}^2 \log \bigl( e + \|\tilde{\rho}\|_{L^2} \bigr)+\frac{1}{16 \rho^*} \|\nabla \mathcal{G}\|_{L^2}^2.
\end{equation*}
Therefore,    substituting the above estimates into  \eqref{3.18}, we arrive at
\begin{align}\label{3.23}
    &\frac{d}{dt} \tilde{\Phi} + \frac{1}{4} \left\| \sqrt{\rho} \dot{u} \right\|_{L^2}^2
  + \frac{1}{4} \left\| \sqrt{\rho} \dot{w} \right\|_{L^2}^2+\frac{\mu_2^2}{12\rho^*} \|\Delta w\|_{L^2}^2+ \frac{1}{2} \int  (\operatorname{div} u)^2\Lambda \, dx \nonumber \\
  &\quad+ \frac{(\mu+\xi)^2}{8\rho^*} \left\| \nabla(\nabla^{\perp}\cdot u)\right\|_{L^2}^2
+ \frac{1}{8\rho^*} \left\| \nabla \mathcal{G} \right\|_{L^2}^2 \nonumber \\
\leq&\frac{1}{4\nu}\|\tilde{P}\|_{L^2}^2
+\frac{2\xi^2}{\rho^*}\|\nabla^{\perp} w\|_{L^2}^2+\frac{8\xi^2}{\rho^*}\|w\|_{L^2}^2+\frac{2\xi^2}{\rho^*}\|\nabla^{\perp}\cdot u\|_{L^2}^2+ \frac{C\rho^*}{\nu^2}  \log \bigl( e + \|\tilde{\rho}\|_{L^2}\bigr)(P^*)^2 \|\nabla u\|_{L^2}^2\nonumber  \\
&+\frac{2}{\nu^2} \int \tilde{\mathcal{G}}^2\Lambda \, dx+C\|\sqrt{\rho}u\|_{L^2}^2\|\nabla u\|_{L^2}^2\log\left(e+\|\tilde{\rho}\|_{L^2}+\frac{\|\rho\|_{L^2}\|\nabla u\|_{L^2}^2}{\|\sqrt{\rho}u\|_{L^2}^2}\right)\nonumber\\
&\quad\times \left(\frac{\rho^*\|\tilde{P}\|_{L^{\infty}}^2}{\nu^3}+\frac{(\rho^*)^2}{\nu^4}\|\tilde{\mathcal{G}}\|_{L^2}^2
+\frac{(\rho^*)^2}{(\mu+\xi)^2}\|\nabla^{\perp}\cdot u\|_{L^2}^2+\frac{(\rho^*)^2\|\nabla w\|_{L^2}^2}{\mu_2^2}\right).
\end{align}

\bigbreak

Next, we improve the energy functional $\tilde{\Phi}$. To close the bootstrap argument, we augment $\tilde{\Phi}$ with  the basic energy $\mathcal{E}$ and the internal potential energy $e$. This extension is needed to control the pressure fluctuation $\|\tilde{P}\|_{L^2}$. From the continuity equation, the internal energy $e$ satisfies
\[
\partial_t e + \operatorname{div} (e u) + P \operatorname{div} u = 0.
\]
Integrating over   $\mathbb{T}^2$ and using  $\nu \, \text{div}\, u = \tilde{P} + \tilde{\mathcal{G}}$, we obtain
\begin{align}\label{3.24}
    \frac{d}{dt}\int e\,dx + \frac{1}{\nu} \int  \tilde{P}^2 \,dx = -\frac{1}{\nu} \int \tilde{P} \tilde{\mathcal{G}} \, dx.
\end{align}
 For all $\rho \geq 0$, in view of \eqref{norm} and $P(\rho)=\rho^\gamma$, we have
        \begin{align*}
            \int \rho(x)\left(\int_1^{\rho(x)}\frac{P^2(\tau)}{\tau^2}\mathrm{d}\tau\right)\mathrm{d}x
           &\leq  \int\rho P(\rho) \int_{1}^{\rho} \frac{P(\tau)}{\tau^2} \mathrm{d}\tau \mathrm{d}x \\
           &\leq P^*\int\bigl(e(\rho) + P(1)\rho\bigr)\mathrm{d}x- \int P(\rho)P(1)\mathrm{d}x\\
          & \leq P^*\left(\|e\|_{L^1}+P(1)\right)-\bar{P}P(1),
        \end{align*}
        which implies that
\begin{align}\label{3.26}
    \tilde{\Phi} \geq &\frac{1}{2} \int \left(
\mu |\nabla^{\perp}\cdot u|^2+\mu_2|\nabla w|^2+\xi|\nabla^{\perp}\cdot u-2w|^2 + \frac{1}{\nu} \left(
\tilde{\mathcal{G}}^2 + \tilde{P}^2
\right)
\right) dx\nonumber\\
&- \frac{1}{2\nu} \bigl(P^* \|e\|_{L^1} + P(1)(P^* - \bar{P})\bigr).
\end{align}
We now define the augmented energy functional
\begin{align*}
\Phi:=\tilde{\Phi}+\mathcal{E}+\|e\|_{L^1}+\frac{1}{2\nu}(P^*-P(1))P(1),
\end{align*}
which reads explicitly
\begin{align*}
\Phi
=&\frac{1}{2}\int \left(\rho|u|^2+\rho|w|^2+\mu|\nabla \mathcal{P}u|^2+\mu_2|\nabla w|^2+\xi|\nabla^{\perp}\cdot u-2w|^2+4e(\rho)\right)dx\\
&+\frac{1}{2\nu}\int \left(\tilde{\mathcal{G}}^2+\tilde{P}^2+\rho\int_{\rho}^1\frac{P^2(\tau)}{\tau^2}d\tau+(P^*-P(1))P(1)\right)dx.
\end{align*}
Then, using \eqref{3.26}, we arrive at
\begin{align}\label{3.27}
    \Phi\geq &\frac{1}{2}\left(\|\sqrt{\rho}u\|_{L^2}^2+\|\sqrt{\rho}w\|_{L^2}^2+\mu\|\nabla^{\perp}\cdot u\|_{L^2}^2+\mu_2\|\nabla w\|_{L^2}^2+\xi\|\nabla^{\perp}\cdot u-2w\|_{L^2}^2\right)\nonumber\\
    &+\frac{1}{2}\left(\frac{1}{\nu}(\|\tilde{\mathcal{G}}\|_{L^2}^2+\|\tilde{P}\|_{L^2}^2)+\left(4-\frac{P^*}{\nu}\right)\|e\|_{L^1}\right).
\end{align}

\bigbreak

We now combine the foregoing estimates to derive a closed differential inequality for the energy functional
$\Phi$. To control the interaction between the pressure and effective viscous flux on the right-hand side of  \eqref{3.24}, we first use Young's inequality to obtain
\[
\frac{1}{\nu} \int |\tilde{P}|\, |\tilde{\mathcal{G}}| \, dx \leq \frac{1}{\nu} \int  \tilde{\mathcal{G}}^2 \, dx + \frac{1}{4\nu} \int  \tilde{P}^2 \, dx.
\]
Next, the Poincar\'{e} inequality applied to $\tilde{\mathcal{G}}$ gives
\[
\frac{1}{\nu} \int  \left(
\frac{2}{\nu} \Lambda + 1
\right) \tilde{\mathcal{G}}^2 \, dx \leq \frac{8\rho^*}{\nu} \left(
\frac{2}{\nu} \|\Lambda\|_{L^{\infty}} + 1
\right) \frac{\|\nabla \mathcal{G}\|_{L^2}^2}{8\rho^*}.
\]
Recalling that $\|\tilde{\rho}\|_{L^2}^2 \leq(\rho^*)^2-1$ by the uniform bound \eqref{shangquejie}, we deduce from \eqref{3.23} that
\begin{align*}
    \frac{d}{dt} &\Phi
+ \frac{1}{4} \left\| \sqrt{\rho} \dot{u} \right\|_{L^2}^2
+ \frac{(\mu+\xi)^2}{8\rho^*} \left\|\nabla( \nabla^{\perp}\cdot u )\right\|_{L^2}^2
+ \frac{1}{8\rho^*}\left(1-\frac{8\rho^*}{\nu}\left(\frac{2}{\nu}\|\Lambda\|_{L^{\infty}}+1\right)\right) \left\| \nabla \mathcal{G} \right\|_{L^2}^2 \\
&+ \frac{1}{4} \left\| \sqrt{\rho} \dot{w} \right\|_{L^2}^2  +\frac{1}{4\nu}\|\tilde{P}\|_{L^2}^2+\frac{\mu_2^2}{12\rho^*}\|\Delta w\|_{L^2}^2+\xi\|\nabla^{\perp} \cdot u\|_{L^2}^2+\mu_2\|\nabla w\|_{L^2}^2 \\
&+  \int (\operatorname{div} u)^2\left(\frac{\Lambda}{2}+\nu\right)\, dx+\mu\|\nabla^{\perp} \cdot u\|_{L^2}^2 -C\frac{\rho^*\log(e+\rho^*)}{\nu^2}(P^{*})^2\|\nabla u\|_{L^2}^2\\
\leq&\frac{2\xi^2}{\rho^*}\|\nabla^{\perp} w\|_{L^2}^2+\frac{8\xi^2}{\rho^*}\|w\|_{L^2}^2+\frac{2\xi^2}{\rho^*}\|\nabla^{\perp}\cdot u\|_{L^2}^2\nonumber  \\
&+C\|\sqrt{\rho}u\|_{L^2}^2\|\nabla u\|_{L^2}^2\left(\frac{\rho^*\|\tilde{P}\|_{L^{\infty}}^2}{\nu^3}+\frac{(\rho^*)^2}{\nu^4}\|\tilde{\mathcal{G}}\|_{L^2}^2+\frac{(\rho^*)^2}{(\mu+\xi)^2}\|\nabla^{\perp} \cdot u\|_{L^2}^2+\frac{(\rho^*)^2\|\nabla w\|_{L^2}^2}{\mu_2^2}\right)\nonumber\\
&\quad\times\operatorname{log}\left(e+\|\tilde{\rho}\|_{L^2}+\frac{\|\rho\|_{L^2}\|\nabla u\|_{L^2}^2}{\|\sqrt{\rho}u\|_{L^2}^2}\right).
\end{align*}
Due to $\nu\geq \mu$, we have
\begin{align*}
    \mu\|\nabla u\|_{L^2}^2+(\lambda+\mu)\|\operatorname{div}u\|_{L^2}^2=\mu\|\nabla^{\perp} \cdot u\|_{L^2}^2+\nu \|\operatorname{div}u\|_{L^2}^2,
\end{align*}
which implies  
\begin{align*}
   \int  (\operatorname{div} u)^2\left(\frac{\Lambda}{2}+\nu\right)\, dx+\mu\|\nabla^{\perp} \cdot u\|_{L^2}^2\geq \frac{1}{2}\int  (\operatorname{div} u)^2\left(\Lambda+\nu\right)dx +\frac{\mu}{2}\|\nabla u\|_{L^2}^2  +\frac{\mu}{2}\|\nabla^{\perp} \cdot u\|_{L^2}^2.
\end{align*}
To simplify the logarithmic terms, note that for any $M\geq0$,
$$\log(e+\rho^*+\rho^*M)\leq\log (e+\rho^*)+\log(1+M)\leq \log(e+\rho^*)+M.$$
We now impose the following parameter constraints:
\begin{align}\label{3.29}
    \frac{16\rho^*}{\nu}\left(\frac{2}{\nu}\|\Lambda\|_{L^{\infty}}+1\right)\leq 1
\quad \text{and} \quad 4C\frac{\rho^*\log(e+\rho^*)(P^*)^2}{\mu\nu^2}\leq 1.
\end{align}
Using \eqref{3.29} and  the above bounds, we obtain
\begin{align}\label{3.30}
   & \frac{d}{dt}\Phi+\mathcal{D}
    \leq  C\rho^* \|\sqrt{\rho}u\|_{L^2}^2 \|\nabla u\|_{L^2}^2 \frac{\|\tilde{P}\|_{L^{\infty}}^2}{\nu^3}
\left(
\log(e + \rho^*) + \frac{\|\nabla u\|_{L^2}^2}{\|\sqrt{\rho}u\|_{L^2}^2}
\right)\nonumber\\
&+ C(\rho^*)^2 \|{\sqrt{\rho}u}\|_{L^2}^2 \|\nabla u\|_{L^2}^2
\left(
\frac{1}{(\mu+\xi)^2} \|\nabla^{\perp} \cdot u\|_{L^2}^2 + \frac{1}{\nu^4} \|\tilde{\mathcal{G}}\|_{L^2}^2+\frac{\|\nabla w\|_{L^2}^2}{\mu_2^2}
\right)\nonumber\\
&\cdot\left(
\log(e + \rho^*) + \log\left(1 + \frac{\|\nabla u\|_{L^2}^2}{\|\sqrt{\rho}u\|_{L^2}^2}\right)
\right)+\frac{2\xi^2}{\rho^*}\|\nabla^{\perp}w\|_{L^2}^2+\frac{8\xi^2}{\rho^*}\|w\|_{L^2}^2+\frac{2\xi^2}{\rho^*}\|\nabla^{\perp}\cdot u\|_{L^2}^2,
\end{align}
where the functional $\mathcal{D}$ is defined by
\begin{align*}
    \mathcal{D} := &
\frac{1}{4}\|\sqrt{\rho} \dot{u}\|_{L^2}^2 +\frac{1}{4}\|\sqrt{\rho} \dot{w}\|_{L^2}^2+ \frac{1}{12\rho^*}\mu_2^2\|\Delta w\|_{L^2}^2+\frac{(\mu+\xi)^2}{8\rho^*} \|\nabla(\nabla^{\perp}\cdot u)\|_{L^2}^2 + \frac{1}{16\rho^*} \|\nabla \mathcal{G}\|_{L^2}^2
 \nonumber \\
&+ \frac{1}{4\nu} \|\tilde{P}\|_{L^2}^2+\int \left( \frac{\nu + \Lambda}{2} \right) (\operatorname{div } u)^2dx + \frac{\mu+2\xi}{2} \|\nabla^{\perp}\cdot u\|_{L^2}^2+\frac{\mu}{4}\|\nabla u\|_{L^2}^2+\mu_2\|\nabla w\|_{L^2}^2.
\end{align*}
To close the estimates, we impose the following requirements on the bulk viscosity:
\begin{align}\label{3.31}
    \nu\geq\mu,\quad \nu^2\geq4C\mu^{-1}\rho^*\log (e+\rho^*)(P^*)^2,\quad \nu\geq 16\rho^*\left(2\nu^{-1}\|\Lambda\|_{L^{\infty}}+1\right)\quad  \text{and}\quad  \nu\geq \frac{P^*}{2}.
\end{align}
The last condition ensures the coefficient of $\|e\|_{L^1}$ in \eqref{3.27} is at least $1$, so that \eqref{3.4} holds and
\begin{align}\label{3.32}
\begin{aligned}
 &   \|\sqrt{\rho}u\|_{L^2}^2\leq 2\Phi,\quad\|\sqrt{\rho}w\|_{L^2}^2\leq 2\Phi,\quad \|\nabla^{\perp} \cdot u\|_{L^2}^2\leq \frac{2\Phi}{\mu},\\
 &\|\tilde{P}\|_{L^2}^2+ \|\tilde{\mathcal{G}}\|_{L^2}^2\leq 2\nu \Phi \quad \text{and} \quad \|\nabla w\|_{L^2}^2\leq \frac{2\Phi}{\mu_2}.
\end{aligned}
\end{align}
Substituting \eqref{3.32} into \eqref{3.30}, using
\eqref{energy} and the monotonicity of
\[
s\longmapsto s\log^{\frac12}\left(c_0+\frac{c_1}{s}\right),
\qquad c_0\geq1,\quad c_1\geq0,
\]
we obtain, 
\begin{align}\label{xiugai}
\frac{d}{dt} \Phi + \mathcal{D}
\leq& C \frac{(\rho^*)^2}{\mu^3} \mathcal{E}_0 \|\nabla u\|_{L^2}^2 \Phi \log\left(1 + \frac{\Phi}{\mu \mathcal{E}_0}\right)+\frac{2\xi^2}{\rho^*}\|\nabla^{\perp} w\|_{L^2}^2+\frac{8\xi^2}{\rho^*}\|w\|_{L^2}^2+\frac{2\xi^2}{\rho^*}\|\nabla^{\perp}\cdot u\|_{L^2}^2 \nonumber\\
&+ C \left( \frac{(\rho^*)^2}{\mu_3^3} \mathcal{E}_0 \log(e + \rho^*) + \frac{\rho^* \|\tilde{P}\|_{L^{\infty}}^2}{\mu \nu^3} + \frac{\rho^* \|\tilde{P}\|_{L^{\infty}}^2\log(e + \rho^*)}{ \nu^3} \right) \|\nabla u\|_{L^2}^2 \Phi,
\end{align}
where $\mu_3:=\min\{\mu,\mu_2\}$. From \eqref{3.31}, we  get
\[
\frac{\rho^* \|\tilde{P}\|_{L^{\infty}}^2}{\mu \nu^3} + \frac{\rho^* \|\tilde{P}\|_{L^{\infty}}^2\log(e + \rho^*)}{ \nu^3} \leq 1,
\]
which implies
\begin{align*}
\frac{d}{dt}\Phi+\mathcal D
\leq
C\frac{(\rho^*)^2}{\mu^3}\mathcal E_0
\|\nabla u\|_{L^2}^2\Phi
\log\left(1+\frac{\Phi}{\mu\mathcal E_0}\right)
+
C\left(
1+\frac{(\rho^*)^2}{\mu_3^3}\mathcal E_0
\log(e+\rho^*)
\right)
\|\nabla u\|_{L^2}^2\Phi+r(t),
\end{align*}
 where
\[
r(t):=\frac{2\xi^2}{\rho^*}
\left(
\|\nabla^\perp w\|_{L^2}^2
+4\|w\|_{L^2}^2
+\|\nabla^\perp\!\cdot u\|_{L^2}^2
\right).
\]
Since $\|\nabla^\perp w\|_{L^2}=\|\nabla w\|_{L^2}$ and
\[
4\|w\|_{L^2}^2
\leq
2\|\nabla^\perp\!\cdot u\|_{L^2}^2
+
2\|\nabla^\perp\!\cdot u-2w\|_{L^2}^2,
\]
the basic energy identity \eqref{energy} gives, for every $t\geq0$,
\begin{align}\label{xiuzheng1}
\int_0^t r(\tau)\,d\tau
\leq
\frac{2\xi^2}{\rho^*}
\left(
\frac1{\mu_2}+\frac3\mu+\frac2\xi
\right)\mathcal E_0.
\end{align}
Moreover, the periodic div--curl identity, \eqref{energy}, and
$\nu\geq\mu$ imply
\begin{align}\label{xiuzheng2}
\int_0^t\|\nabla u\|_{L^2}^2\,d\tau
&=
\int_0^t
\left(
\|\nabla^\perp\!\cdot u\|_{L^2}^2
+\|\operatorname{div}u\|_{L^2}^2
\right)d\tau
\nonumber\\
&\leq
\left(\frac1\mu+\frac1\nu\right)\mathcal E_0
\leq\frac{2\mathcal E_0}{\mu}.
\end{align}
Set
\[
\Psi(t):=\Phi(t)+\int_0^t\mathcal D(\tau)\,d\tau.
\]
Since $\Psi\geq\Phi$ and
\[
s\longmapsto
s\log\left(1+\frac{s}{\mu\mathcal E_0}\right)
\]
is increasing on $[0,\infty)$, we have
\begin{align*}
\Psi'(t)
\leq 
C\frac{(\rho^*)^2}{\mu^3}\mathcal E_0
\|\nabla u\|_{L^2}^2\Psi
\log\left(1+\frac{\Psi}{\mu\mathcal E_0}\right)
 +
C\left(
1+\frac{(\rho^*)^2}{\mu_3^3}\mathcal E_0
\log(e+\rho^*)
\right)
\|\nabla u\|_{L^2}^2\Psi+r(t).
\end{align*}
Consequently,
\begin{align*}
\frac{d}{dt}
\log\left(1+\frac{\Psi(t)}{\mu\mathcal E_0}\right)
\leq{}&
C\frac{(\rho^*)^2}{\mu^3}\mathcal E_0
\|\nabla u\|_{L^2}^2
\log\left(1+\frac{\Psi(t)}{\mu\mathcal E_0}\right)
\\
&+
C\left(
1+\frac{(\rho^*)^2}{\mu_3^3}\mathcal E_0
\log(e+\rho^*)
\right)\|\nabla u\|_{L^2}^2
+\frac{r(t)}{\mu\mathcal E_0}.
\end{align*}
Applying the usual Gronwall inequality and using \eqref{xiuzheng1} and \eqref{xiuzheng2}, we obtain
\begin{align}\label{3.5xiu}
\log\left(1+\frac{\Psi(t)}{\mu\mathcal E_0}\right)
\leq
\Bigg[ 
\log\left(1+\frac{\Phi_0}{\mu\mathcal E_0}\right)
 +
C\left(
1+\frac{(\rho^*)^2}{\mu_3^3}\mathcal E_0
\log(e+\rho^*)
\right)\frac{\mathcal E_0}{\mu}
 \nonumber\\
 +\frac{2\xi^2}{\rho^*\mu}
\left(
\frac1{\mu_2}+\frac3\mu+\frac2\xi
\right)
\Bigg]
\exp\left(
C\frac{(\rho^*)^2\mathcal E_0^2}{\mu^4}
\right).
\end{align}
Then exponentiating \eqref{3.5xiu} gives \eqref{3.5}.
In particular, the right-hand side of \eqref{3.5} is independent
of $t$ and $T$. This finishes the proof.
\end{proof}

\begin{rem}
    As a direct consequence of   \eqref{G} and   \eqref{3.4}, one obtains
    $$\nu\|\operatorname{div}u\|_{L^2}^2\leq 4\Phi.$$
\end{rem}

\bigbreak

\subsection{Uniform bounds on the density}
In this subsection, we establish a global-in-time $L^{\infty}$ estimate for the density under the assumption
that the bulk viscosity coefficient is sufficiently large.

\begin{prop}\label{prop2}
Let $(\rho, u, w)$ be a smooth solution of the Cauchy problem \eqref{0}--\eqref{eq:IC} on $[0,T]\times \mathbb{T}^2$ with initial data $(\rho_0, u_0, w_0)$ satisfying $\rho_0>0$ and  $\nu^{1/2}\|\operatorname{div}u_0\|_{L^2}\leq \digamma$. After enlarging the threshold of Proposition~\ref{prop1}, additionally let $\nu\geq \frac{16C_0^4}{\log^4 2}$. Then there exists $\nu_0>0$, depending on $\digamma$, $\gamma$, $\mu$, $\mu_2$, $\xi$, $\mathcal{E}_0$, $\|\rho_0\|_{L^{\infty}}$, $\|\nabla u_0\|_{L^2}$, and $\|\nabla w_0\|_{L^2}$, but independent of  time $T$, such that for all $\nu\geq \nu_0$,  one has

    \begin{align}\label{3.33}
        \sup_{t\in[0,T]} \|\rho(t)\|_{L^{\infty}} \leq 2e^{\frac{\gamma-1}{\gamma}\mathcal{E}_0 } \|\rho_0\|_{L^{\infty}}.
    \end{align}
\end{prop}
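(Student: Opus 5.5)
We follow the Danchin--Mucha strategy sketched in the introduction and work with the quantity
\[
\Upsilon:=\log\rho-\nu^{-1}(-\Delta)^{-1}\operatorname{div}(\rho u),
\]
which is well defined since $\rho_0>0$ and hence $\rho>0$ for the smooth solution. The plan is a continuity (bootstrap) argument. Let $T^*\le T$ be the largest time on which $\|\rho\|_{L^\infty}\le \rho^*:=4e^{\frac{\gamma-1}{\gamma}\mathcal E_0}\|\rho_0\|_{L^\infty}$. On $[0,T^*]$ Proposition~\ref{prop1} applies with this $\rho^*$. It gives a bound on $\Phi$ and $\int_0^t\mathcal D$ that is independent of $T$ and of $\nu$, once $\nu$ exceeds a threshold depending only on the listed quantities. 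Here the well-preparedness $\nu\|\operatorname{div}u_0\|_{L^2}^2\le\digamma^2$ ensures that $\Phi_0$ is bounded independently of $\nu$. We then show that actually $\|\rho\|_{L^\infty}\le 2e^{\frac{\gamma-1}{\gamma}\mathcal E_0}\|\rho_0\|_{L^\infty}$ on $[0,T^*]$, which closes the bootstrap.

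\textbf{Transport equation for $\Upsilon$.} From the continuity equation, $D_t\log\rho=-\operatorname{div}u=-\nu^{-1}(\mathcal G+P)$. Next write the momentum equation as $(\rho u)_t+\operatorname{div}(\rho u\otimes u)=\nabla\mathcal G+(\mu+\xi)(\Delta u-\nabla\operatorname{div}u)-2\xi\nabla^\perp w$. Applying $(-\Delta)^{-1}\operatorname{div}$ annihilates the solenoidal terms and gives $\partial_t(-\Delta)^{-1}\operatorname{div}(\rho u)=-\tilde{\mathcal G}+(-\Delta)^{-1}\operatorname{div}\operatorname{div}(\rho u\otimes u)$. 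Combining the two identities yields
\[
D_t\Upsilon+\frac{P(\rho)}{\nu}=-\frac{\bar{\mathcal G}}{\nu}-\frac1\nu\Bigl([u_j,\mathcal R_i\mathcal R_j](\rho u_i)\Bigr)+\frac1\nu\,u\cdot\nabla(-\Delta)^{-1}\operatorname{div}(\rho u)\text{-type terms}.
\]
The key point is that $\mathcal G$ itself cancels: only its mean $\bar{\mathcal G}=-\bar P$ survives, and this is harmless since $\bar P\ge0$. The term $P/\nu\ge0$ has the good sign. What remains is a commutator between $u$ and the double Riesz transform.

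\textbf{Integration along trajectories.} Integrating along the flow gives
\[
\log\rho(t,X(t))\le\log\|\rho_0\|_{L^\infty}+\frac{2}{\nu}\|(-\Delta)^{-1}\operatorname{div}(\rho u)\|_{L^\infty_tL^\infty}+\frac1\nu\int_0^t\|[u_j,\mathcal R_i\mathcal R_j](\rho u_i)\|_{L^\infty}d\tau+\frac{1}{\nu}\int_0^t\bar P\,d\tau .
\]
The pieces are handled as follows.
\begin{itemize}
\item The boundary term is bounded through $W^{1,4}\hookrightarrow L^\infty$ by $C\nu^{-1}\|\rho u\|_{L^4}\le C\nu^{-1}\rho^*\Phi^{1/2}$, using Lemma~\ref{lemma2.2}.
\item The $\bar P$ term is not time-integrable outright. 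Instead, one uses $\bar P=\overline{P-P(1)}$ together with $\int e\,dx$ and the energy identity. More precisely, the term $\nu^{-1}\int_0^t(P(\rho)-\bar P)$ is recombined via \eqref{3.24}. This should produce exactly the factor $e^{\frac{\gamma-1}{\gamma}\mathcal E_0}$ (for $\gamma=1$ the factor is $1$, consistent with the statement).
\item For the commutator, we use the Coifman--Lions--Meyer--Semmes estimate in $\mathrm{BMO}$/$L^\infty$ form. Its $L^\infty$ norm is controlled, via the logarithmic interpolation (Beale--Kato--Majda, \eqref{BKMineq}) and Lemma~\ref{GNS}, by $C\|\nabla u\|_{L^4}\|\rho u\|_{L^4}\log(e+\|\nabla^2u\|_{L^q}+\dots)$.
\end{itemize}

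\textbf{Main obstacle.} The hard step is the time integral of the commutator. We need $\int_0^t\|\nabla u\|_{L^4}^{\alpha}\,d\tau$ bounded uniformly in $t$, and the only available source is the dissipation: $\nabla u$ in $L^2_tL^2$ and, via \eqref{3.17}, $\nabla\mathcal P u$ and $\nabla\mathcal G/\nu$ in $L^2_tH^1$. This gives $\nabla u\in L^4_tL^4$ uniformly. The logarithm must then be absorbed; we plan to split small and large times and use the time-weighted bounds as in \cite{R.Danchin2,liu2026}. The result is an estimate of the form $\log\rho\le\log\|\rho_0\|_{L^\infty}+\frac{\gamma-1}{\gamma}\mathcal E_0+C_0\nu^{-1/4}(1+\log\rho^*)$. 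The requirement $\nu\ge 16C_0^4/\log^42$ is exactly what makes the last term at most $\log 2$, hence $\rho\le 2e^{\frac{\gamma-1}{\gamma}\mathcal E_0}\|\rho_0\|_{L^\infty}<\rho^*$. Continuity then gives $T^*=T$, and the bound is independent of $\inf\rho_0$.
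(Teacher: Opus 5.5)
\textbf{There is a genuine gap in how you treat the pressure term.} This is exactly the step that must give both uniformity in $T$ and the constant $e^{\frac{\gamma-1}{\gamma}\mathcal E_0}$.

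Your transport equation is fine. All the $u\cdot\nabla$ pieces are absorbed into the commutator, giving $D_t\Upsilon+\nu^{-1}\tilde P=-\nu^{-1}[u^j,(-\Delta)^{-1}\partial_i\partial_j]\rho u^i$. The problem comes next: you drop $P/\nu$ and keep $-\bar{\mathcal G}/\nu=\bar P/\nu$ as a source on the right. That has the bad sign for an upper bound. Since $\bar\rho=1$ and $\gamma\ge1$, Jensen gives $\bar P\ge1$, so $\nu^{-1}\int_0^t\bar P\,d\tau\ge t/\nu$, which is unbounded for fixed $\nu$.

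Appealing to \eqref{3.24} cannot repair this. That identity is integrated in space and controls $\nu^{-1}\int_0^t\|\tilde P\|_{L^2}^2\,d\tau$. Your inequality, by contrast, is along a single characteristic, and $\bar P\ge1$ holds at every time. Also, $\overline{P-P(1)}=\bar P-1$, not $\bar P$.

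\textbf{The missing idea is to use the pressure as a damping term.} By convexity,
\[
P(\rho)=e^{\gamma\log\rho}\ge 1+\gamma\log\rho=1+\gamma\Upsilon+\gamma\nu^{-1}(-\Delta)^{-1}\operatorname{div}(\rho u),
\]
and $\bar P-1=(\gamma-1)\|e\|_{L^1}\le(\gamma-1)\mathcal E_0$. Hence
\[
D_t\Upsilon+\frac{\gamma}{\nu}\Upsilon\le\frac{(\gamma-1)\mathcal E_0}{\nu}+\frac{\gamma}{\nu^2}\bigl\|(-\Delta)^{-1}\operatorname{div}(\rho u)\bigr\|_{L^\infty}+\frac1\nu\bigl\|[u^j,(-\Delta)^{-1}\partial_i\partial_j]\rho u^i\bigr\|_{L^\infty},
\]
which yields \eqref{3.36} with the kernel $e^{-\frac{\gamma}{\nu}(t-\tau)}$. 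This kernel does two jobs.

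First, integrated against the source, it gives $\frac{\gamma-1}{\gamma}\bigl(1-e^{-\gamma t/\nu}\bigr)\mathcal E_0$. That is where the factor $1/\gamma$ in the exponent comes from, and your route has no mechanism to produce it.

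Second, it makes the commutator contribution uniform in $t$. The paper bounds the commutator in $W^{1,3}$ via Coifman--Lions--Meyer--Semmes, by $\|\nabla u\|_{L^{12}}\|\rho u\|_{L^4}$; no Beale--Kato--Majda logarithm is needed. It then uses
\[
\|\nabla u\|_{L^{12}}\lesssim\mathcal D^{1/2}+\nu^{-1}\|\tilde P\|_{L^\infty},
\]
and $\|\rho u\|_{L^4}\lesssim\mathcal E_0^{1/4}\|\nabla u\|_{L^2}^{1/2}$ up to the logarithm in \eqref{Desjardins1997}. H\"older's inequality with $\int_0^te^{-\frac{4\gamma}{\nu}(t-\tau)}d\tau\le\frac{\nu}{4\gamma}$ then bounds $\nu^{-1}\int_0^te^{-\frac{\gamma}{\nu}(t-\tau)}\mathcal D^{1/2}\|\nabla u\|_{L^2}^{1/2}\,d\tau$ by $C\nu^{-3/4}\bigl(\int_0^t\mathcal D\bigr)^{1/2}\bigl(\int_0^t\|\nabla u\|_{L^2}^2\bigr)^{1/4}$, independently of $t$.

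Without the kernel, the same H\"older step leaves a factor $t^{1/4}$, and a positive power of $t$ for the $\nu^{-1}\|\tilde P\|_{L^\infty}$ part. Your proposed remedy for this ``main obstacle'' does not work: the time-weighted bounds of Proposition~\ref{prop3} carry constants $C(T)$, which is incompatible with a threshold $\nu_0$ independent of $T$.
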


\begin{proof}
 By virtue of  \eqref{Gdingyi} and the positivity of  $\rho$, the material derivative of $\log \rho$ satisfies
$$\partial_t\log \rho+u\cdot \nabla \log \rho=-\frac{1}{\nu}(\tilde{\mathcal{G}}+\tilde{P}).$$
Taking the divergence of the momentum equation yields the elliptic equation for the fluctuation
$\tilde{\mathcal{G}}$:
$$\Delta\tilde{\mathcal{G}} = \partial_t   \operatorname{div} \left( \rho u \right)  + \operatorname{div} \left( \text{div} \left( \rho u \otimes u \right) \right).$$
Following the approach introduced in \cite{R.Danchin2,Desjardins1997},  we introduce the auxiliary function
$$\Upsilon \coloneqq \log \rho - \nu^{-1} (-\Delta)^{-1} \text{div} \left( \rho u \right),$$
which, using the summation convention over repeated indices, obeys the transport equation
\begin{equation}\label{3.34}
\partial_t \Upsilon + u \cdot \nabla \Upsilon + \frac{1}{\nu} \tilde{P} =  - \frac{1}{\nu} \left[ u^j, \left(-\Delta\right)^{-1} \partial_i \partial_j \right] \rho u^i.
\end{equation}
For the isentropic pressure law $P(\rho) = \rho^\gamma$, one readily verifies the identity $ \bar{P}-1 = (\gamma - 1) \| e \|_{L^1}$, which allows us to bound the final term by $(\gamma - 1) \mathcal{E}_0$. Applying the maximum principle for  transport equations, we obtain
\begin{align}\label{3.36}
\| \Upsilon^+(t) \|_{L^{\infty}}
&\leq e^{-\frac{\gamma}{\nu} t} \| \Upsilon^+(0) \|_{L^{\infty}}
+ \frac{1}{\nu} \int_0^t e^{-\frac{\gamma}{\nu} (t - \tau)} \left\| \left[ u^j, \left(-\Delta\right)^{-1} \partial_i \partial_j \right] \rho u^i(\tau) \right\|_{L^{\infty}} d\tau\nonumber \\
&\quad + \frac{\gamma}{\nu^2} \int_0^t e^{-\frac{\gamma}{\nu} (t - \tau)} \left\| \left(-\Delta\right)^{-1} \operatorname{div} \left( \rho u \right)(\tau) \right\|_{L^{\infty}} d\tau
 + \frac{\gamma - 1}{\gamma} \left( 1 - e^{-\frac{\gamma}{\nu} t} \right) \mathcal{E}_0.
\end{align}
The zero-mean property of $\rho u$,  together with the $L^4$-boundedness of Riesz transforms, implies
\begin{align*}
\left\| \left(-\Delta\right)^{-1} \text{div} \left( \rho u \right) \right\|_{L^{\infty}}
\lesssim
\left\| \left(-\Delta\right)^{-1} \nabla \operatorname{div} \left( \rho u \right) \right\|_{L^4}
\lesssim
\left\| \rho u \right\|_{L^4}.
\end{align*}
Then, we again use  \eqref{Desjardins1997} to obtain
\begin{equation*}
\left\| \left(-\Delta\right)^{-1} \operatorname{div} \left( \rho u \right) \right\|_{L^{\infty}}
\lesssim
\|\rho_0\|_{L^{\infty}}^{\frac{3}{4}}
\left\| \sqrt{\rho} u \right\|_{L^2}^{\frac{1}{2}}
\left\| \nabla u \right\|_{L^2}^{\frac{1}{2}}
\log^{\frac{1}{4}} \left(
e +\|\rho_0\|_{L^{\infty}} + \frac{\|\rho_0\|_{L^{\infty}} \left\| \nabla u \right\|_{L^2}^2}{\left\| \sqrt{\rho} u \right\|_{L^2}^2}
\right).
\end{equation*}
Consequently, by the energy balance \eqref{energy}, the monotonicity of   $s \mapsto s^{1/4} \log^{1/4}\left(c_0+ \frac{c_1}{s}\right)$ for $c_0 \geq 1$ and $c_1 \geq 0$, and the definition of $\Phi$ (under the assumption $\nu\geq \mu$), we obtain
\begin{equation}\label{3.38}
\left\| \left(-\Delta\right)^{-1} \operatorname{div} \left( \rho u \right) \right\|_{L^{\infty}}
\leq
C \|\rho_0\|_{L^{\infty}}^{\frac{3}{4}}
\mathcal{E}_0^{\frac{1}{4}}
\left\| \nabla u \right\|_{L^2}^{\frac{1}{2}}
\log^{\frac{1}{4}} \left(
e + \|\rho_0\|_{L^{\infty}} + \frac{\|\rho_0\|_{L^{\infty}} \Phi}{\mu \mathcal{E}_0}
\right).
\end{equation}
The commutator satisfies $[ u^j, \left(-\Delta\right)^{-1} \partial_i \partial_j ] \rho u^i = [ \tilde{u}^j, (-\Delta)^{-1} \partial_i \partial_j ] \rho u^i$ with $\tilde{u} = u - \bar{u}$, so we may apply the Coifman-Lions-Meyer-Semmes inequality  \cite{Coifman1993} to estimate the second term on the right-hand side of \eqref{3.36} as follows
\begin{equation}\label{3.39}
\left\| \left[ u^j, \left(-\Delta\right)^{-1} \partial_i \partial_j \right] \rho u^i \right\|_{L^{\infty}}
\lesssim
\left\| \left[ \tilde{u}^j, \left(-\Delta\right)^{-1} \partial_i \partial_j \right] \rho u^i \right\|_{W^{1,3}}
\lesssim
\left\| \nabla u \right\|_{L^{12}} \left\| \rho u \right\|_{L^4}.
\end{equation}
To bound $\nabla u$, we use the decomposition
\begin{align*}
\left\| \nabla u \right\|_{L^{12}}
\lesssim
\left\| \nabla \mathcal{P} u\right\|_{L^{12}} + \nu^{-1} \left(
\left\| \tilde{\mathcal{G}} \right\|_{L^{12}} + \left\| \tilde{P} \right\|_{L^{12}}
\right)
\lesssim
\left\| \nabla(\nabla^{\perp}\cdot u) \right\|_{L^2} + \nu^{-1} \left(
\left\| \nabla \mathcal{G} \right\|_{L^2} + \left\| \tilde{P} \right\|_{L^{\infty}}
\right).
\end{align*}
Proceeding as in the derivation of \eqref{3.38}, we arrive at
\begin{align*}
\begin{split}
\left\| [ u^j, \left(-\Delta\right)^{-1} \partial_i \partial_j ] \rho u^i \right\|_{L^{\infty}}
&\lesssim \|\rho_0\|_{L^{\infty}}^{\frac{3}{4}}
\left(
\left\| \nabla(\nabla^{\perp}\cdot u) \right\|_{L^2} + \nu^{-1} \left\| \nabla \mathcal{G} \right\|_{L^2} + \nu^{-1} \left\| \tilde{P} \right\|_{L^{\infty}}
\right) \\
&\quad \times \left\| \sqrt{\rho} u \right\|_{L^2}^{\frac{1}{2}}
\left\| \nabla u \right\|_{L^2}^{\frac{1}{2}}
\log^{\frac{1}{4}} \left(
e + \|\rho_0\|_{L^{\infty}} + \frac{\|\rho_0\|_{L^{\infty}}\left\| \nabla u \right\|_{L^2}^2}{\left\| \sqrt{\rho} u \right\|_{L^2}^2}
\right).
\end{split}
\end{align*}
As a result, using energy conservation \eqref{energy}  and the definitions of $\Phi$ and $\mathcal{D}$,
\begin{align}\label{3.40}
\left\| [ u^j, \left(-\Delta\right)^{-1} \partial_i \partial_j ] \rho u^i \right\|_{L^{\infty}}
\lesssim&
\left(
\|\rho_0\|_{L^{\infty}}^{\frac{5}{4}} (\mu+\xi)^{-1} \mathcal{D}^{\frac{1}{2}} + \|\rho_0\|_{L^{\infty}}^{\frac{3}{4}} \nu^{-1} \left\| \tilde{P} \right\|_{L^{\infty}}
\right)\nonumber\\
&\quad\times\mathcal{E}_0^{\frac{1}{4}} \left\| \nabla u \right\|_{L^2}^{\frac{1}{2}}
\log^{\frac{1}{4}} \left(
e +\|\rho_0\|_{L^{\infty}} + \frac{\|\rho_0\|_{L^{\infty}}\Phi}{\mu  \mathcal{E}_0}
\right).
\end{align}
Substituting  \eqref{3.38} and \eqref{3.40} into \eqref{3.36} and performing elementary simplifications yields
\begin{align}\label{3.41}
\left\| \Upsilon^+(t) \right\|_{L^{\infty}}
&\leq \left\| \Upsilon^+(0) \right\|_{L^{\infty}}+ \frac{\gamma - 1}{\gamma} \mathcal{E}_0 \nonumber\\
&\quad + C \frac{\|\rho_0\|_{L^{\infty}}^{\frac{3}{4}} \mathcal{E}_0^{\frac{1}{4}}}{\nu}
\int_0^t e^{-\frac{\gamma}{\nu} (t - \tau)}
\Bigg(
\left( \sqrt{\|\rho_0\|_{L^{\infty}}} (\mu+\xi)^{-1} \mathcal{D}^{\frac{1}{2}} + \nu^{-1} \left( \left\| \tilde{P} \right\|_{L^{\infty}} + \gamma \right) \right) \nonumber\\
&\qquad \times \left\| \nabla u \right\|_{L^2}^{\frac{1}{2}}
\log^{\frac{1}{4}} \left(
e + \|\rho_0\|_{L^{\infty}} + \frac{\|\rho_0\|_{L^{\infty}}\Phi}{\mu\mathcal{E}_0}
\right)
\Bigg)\, d\tau.
\end{align}

Now consider the maximal subinterval $[0,T_0]\subset[0,T]$ on which the density bound \eqref{3.33} holds. From \eqref{3.5}, there exist $\nu_0$ depending only on $\|\rho_0\|_{L^{\infty}}$, $\xi$,   $\mu$, $\mu_2$, and $\gamma$, and $C_0>0$ (depending also on $\digamma,\ \mathcal{E}_0,\ \xi, \ \|\nabla u_0\|_{L^2}$,  $\|\nabla w_0\|_{L^2}$  and $\|\rho_0\|_{L^{\infty}}$ )  such that for all $t\in[0,T_0]$ and $\nu\geq \nu_0$,
\begin{align}\label{3.42}
    \Phi(t)+\int_0^t\mathcal{D}(\tau)d \tau\leq C_0.
\end{align}
Since $\frac{1}{\mu}\geq\frac{1}{\mu+\xi}$,   inequality \eqref{3.41} reduces to
$$
\left\| \Upsilon^+ (t) \right\|_{L^{\infty}} \leq \left\| \Upsilon^+ (0) \right\|_{L^{\infty}} +
\frac{\gamma - 1}{\gamma} \mathcal{E}_0 +
\frac{C_0}{\nu} \int_0^t
e^{-\frac{\gamma}{\nu}(t - \tau)}
\left(
    \frac{\mathcal{D}^{\frac{1}{2}}(\tau)} {\mu} + \frac{1}{\nu}
\right)
\left\| \nabla u (\tau) \right\|_{L^2}^{\frac{1}{2}}
d\tau.
$$
By H\"{o}lder's inequality,  for any $t \in [0, T_0]$,
\begin{align*}
\int_0^t e^{-\frac{\gamma}{\nu}(t - \tau)} \mathcal{D}^{\frac{1}{2}} \left\| \nabla u \right\|_{L^2}^{\frac{1}{2}} d\tau
\leq C \left( \frac{\nu}{\gamma} \right)^{\frac{1}{4}}
\left( \int_0^t \mathcal{D}(\tau) d\tau \right)^{\frac{1}{2}}
\left( \int_0^t \left\| \nabla u(\tau) \right\|_{L^2}^2 d\tau \right)^{\frac{1}{4}},
\end{align*}
and similarly
\begin{align*}
\int_0^t e^{-\frac{\gamma}{\nu}(t - \tau)} \left\| \nabla u(\tau) \right\|_{L^2}^{\frac{1}{2}} d\tau
&\leq C \left( \frac{\nu}{\gamma} \right)^{\frac{3}{4}}
\left( \int_0^t \left\| \nabla u(\tau) \right\|_{L^2}^2 d\tau \right)^{\frac{1}{4}}.
\end{align*}
The integrals on the right are bounded by initial data through the basic energy balance \eqref{energy} and \eqref{3.42}. Consequently, adjusting $C_0$ if necessary, we obtain for $\nu \geq \nu_0$,
$$
\left\| \Upsilon^+ (t) \right\|_{L^{\infty}}\leq \left\| \Upsilon^+ (0) \right\|_{L^{\infty}} + C_0 \nu^{-\frac{1}{4}} + \frac{\gamma - 1}{\gamma} \mathcal{E}_0.
$$
From the definition of $\Upsilon^+$, \eqref{3.5} and \eqref{3.38},
$\log \rho \leq \Upsilon^++\nu^{-1}\|\Delta^{-1}\operatorname{div}(\rho u)\|_{L^{\infty}}\leq \Upsilon^++\nu^{-1}C_0$,
so that
\begin{align*} 
\log \|\rho(t)\|_{L^\infty} \leq \log \|\rho_0\|_{L^\infty} +C_0\nu^{-\frac{1}{4}}+\frac{\gamma-1}{\gamma}\mathcal{E}_0.
\end{align*}
Therefore, provided  $\nu$ is large enough to satisfy also
$C_0\nu^{-\frac{1}{4}}\leq \frac{1}{2}\log 2$,
which implies that
$\|\rho(t)\|_{L^\infty}\leq \sqrt{2}\|\rho_0\|_{L^\infty}\mathrm{e}^{\frac{\gamma-1}{\gamma}\mathcal{E}_0}<2\|\rho_0\|_{L^\infty}\mathrm{e}^{\frac{\gamma-1}{\gamma}\mathcal{E}_0}$.
Consequently, we have $T_0=T$, and \eqref{3.33} holds.
\end{proof}

\bigbreak

\subsection{Weighted estimates}

We now derive time-weighted estimates for the solution, which play a key role in the proof of Theorem \ref{result1}. Hereafter,  we assume  $\nu\geq\nu_0$, where $\nu_0$ is the threshold value determined in Propositions \ref{prop1} and \ref{prop2}. To simplify notation, we let $C_i$ ($i=0,1,2,\ldots$) denote positive constants that may depend on $T$,  $\gamma$, $\mu$, $\mu_2$, $\digamma$, $\xi$,  and the initial norms, but are independent of the bulk viscosity  $\nu$.
\begin{prop}\label{prop3}
 Let $T>0$  and suppose that $(\rho,u,w)$ is a smooth solution to the Cauchy problem  \eqref{0}--\eqref{eq:IC} posed on the two-dimensional torus $\mathbb{T}^2\times(0,T)$. Then, for every $\nu\geq\nu_0$, where $\nu_0$ is given in Propositions \ref{prop1} and  \ref{prop2}, there exists a positive constant $C(T)>0$, independent of $\nu$, such that
    \begin{align}\label{3.3}
\sup_{0\leq t\leq T}t&\left (\bigl\|\sqrt{\rho}\,\dot{u}(t)\bigr\|_{L^2}^2+\bigl\|\sqrt{\rho}\,\dot{w}(t)\bigr\|_{L^2}^2\right)
\nonumber\\
&+ \int_0^T t\left(\mu\bigl\|\nabla^\perp\cdot\dot{u}\bigr\|_{L^2}^2 + \nu\bigl\|\dot{Z}\bigr\|_{L^2}^2+\mu_2\|\nabla \dot{w}\|_{L^2}^2+\xi\|\nabla^{\perp}\cdot \dot{u}-2\dot{w}\|_{L^2}^2\right)dt\leq C(T),
\end{align}
where $Z := \operatorname{div}u$ and $\dot{Z} := \operatorname{div}u_t + u\cdot\nabla\operatorname{div}u$.

\end{prop}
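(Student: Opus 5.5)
We follow the Hoff-type material-derivative scheme, in the variant of Liu--Zhang \cite{liu2026}, coupled with the microrotation. The plan is to apply $D_t=\partial_t+u\cdot\nabla$ to the momentum equation, written through \eqref{3.15} as
\[
\rho\dot u=(\mu+\xi)\nabla^\perp(\nabla^\perp\!\cdot u)+\nabla\mathcal G-2\xi\nabla^\perp w,
\]
where we used $\Delta u-\nabla\operatorname{div}u=\nabla^\perp(\nabla^\perp\!\cdot u)$. We test the result with $t\dot u$. In parallel we apply $D_t$ to $\rho\dot w+4\xi w=\mu_2\Delta w+2\xi\nabla^\perp\!\cdot u$ and test with $t\dot w$.

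Using the continuity equation, the left-hand sides produce $\frac12\frac{d}{dt}\bigl(t\|\sqrt\rho\dot u\|_{L^2}^2+t\|\sqrt\rho\dot w\|_{L^2}^2\bigr)$. The error $\frac12(\|\sqrt\rho\dot u\|_{L^2}^2+\|\sqrt\rho\dot w\|_{L^2}^2)$ is integrable by Proposition~\ref{prop1}. The key point is to treat the pressure/divergence part through $\mathcal G=\nu Z-P$, so that
\[
D_t\mathcal G=\nu\dot Z+\Lambda Z
\]
by \eqref{wanneng}. After integrating by parts, the term $\int t\,\dot u\cdot D_t\nabla\mathcal G$ then yields $-\nu t\|\dot Z\|_{L^2}^2$ plus commutators of the form $\nu\int t\,\partial_iu_j\partial_ju_i\,\dot Z$ and $\int t\,\mathcal G\,\partial_iu_j\partial_j\dot u_i$. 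This is exactly where estimating $\dot Z$ instead of $\operatorname{div}\dot u$ matters. The curl part gives $\mu t\|\nabla^\perp\!\cdot\dot u\|_{L^2}^2$ plus commutators bounded by $|\nabla u|^2|\nabla\dot u|$ and $|\nabla u||\nabla^2u||\dot u|$. In the coupling terms we rewrite $-2\xi\int t\,\dot u\cdot D_t\nabla^\perp w$ and $2\xi\int t\,\dot w\,D_t(\nabla^\perp\!\cdot u)$ modulo commutators as $2\xi\int t\,\dot w\,\nabla^\perp\!\cdot\dot u$ (twice). Combined with $4\xi t\|\dot w\|_{L^2}^2$ and the split $(\mu+\xi)=\mu+\xi$, this completes the square $\xi t\|\nabla^\perp\!\cdot\dot u-2\dot w\|_{L^2}^2$. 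The term $\mu_2\int t\,\dot w\,D_t\Delta w$ gives $\mu_2t\|\nabla\dot w\|_{L^2}^2$ plus $[D_t,\Delta]w$ commutators of the form $\int t\,\nabla\dot w\cdot\nabla u\cdot\nabla w$ and $\int t\,\dot w\,\nabla u:\nabla^2w$.

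All commutators are absorbed by Young's inequality into the dissipation, leaving terms of the type
\[
t\|\nabla u\|_{L^4}^4,\qquad t\|\nabla u\|_{L^4}^2\|\nabla w\|_{L^4}^2,\qquad t\|\nabla u\|_{L^2}\|\nabla^2 w\|_{L^2}\|\dot w\|_{L^4},\qquad \tfrac{t}{\nu}\|\mathcal G\|_{L^4}^2\|\nabla u\|_{L^4}^2.
\]
For $\dot u$ and $\dot w$ in $L^4$ we use Lemma~\ref{lemma2.2} (or \eqref{poincare}) to get $\|\dot u\|_{L^4}\lesssim\|\sqrt\rho\dot u\|_{L^2}+\|\nabla\dot u\|_{L^2}$, where $\|\nabla\dot u\|_{L^2}\lesssim\|\nabla^\perp\!\cdot\dot u\|_{L^2}+\|\dot Z\|_{L^2}+\|\nabla u\|_{L^4}^2$ by Lemma~\ref{lemma2.3}. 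The main obstacle is a $\nu$-uniform bound $\int_0^T\|\nabla u\|_{L^4}^4\,dt\le C(T)$. To get it we use the decomposition \eqref{3.17}:
\[
\|\nabla u\|_{L^4}\lesssim\|\nabla\mathcal P u\|_{L^2}^{1/2}\|\nabla(\nabla^\perp\!\cdot u)\|_{L^2}^{1/2}+\nu^{-1}\bigl(\|\tilde{\mathcal G}\|_{L^2}^{1/2}\|\nabla\mathcal G\|_{L^2}^{1/2}+\|\tilde P\|_{L^4}\bigr).
\]
The uniform bounds $\Phi\le C_0$ and $\int_0^T\mathcal D\,dt\le C_0$ from \eqref{3.42}, together with $\|\tilde{\mathcal G}\|_{L^2}^2\le2\nu\Phi$ and Proposition~\ref{prop2}, then give the $L^4_tL^4_x$ bound with constants independent of $\nu$. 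The $w$-terms are handled through the elliptic estimate for $\Delta w$ already derived in Proposition~\ref{prop1} and the bound $\int_0^T\|\Delta w\|_{L^2}^2\,dt\le C$.

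The dangerous term is $\nu\int t\,\partial_iu_j\partial_ju_i\,\dot Z$. By Young's inequality it is bounded by $\frac\nu4t\|\dot Z\|_{L^2}^2+C\nu t\|\nabla u\|_{L^4}^4$, which is \emph{not} uniform. Following \cite{liu2026}, we instead write $\partial_iu_j\partial_ju_i=Z^2+\det$-type terms. The Jacobian part is a div-curl quantity in $\mathcal H^1$, and we move a derivative onto it, using \eqref{BMO} to trade it against $\|\nabla Z\|$, which is small: $\sqrt\nu\|Z\|$ is controlled through $\nabla\mathcal G$ and $\nabla P$ divided by $\nu$. If this fails, the fallback is to integrate by parts in time. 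After collecting terms we obtain
\[
\frac{d}{dt}(tA)+tB\le C\bigl(\|\nabla u\|_{L^4}^4+\|\nabla u\|_{L^2}^2+\mathcal D\bigr)\,tA+C(\mathcal D+\|\nabla u\|_{L^4}^4),
\]
where $A=\|\sqrt\rho\dot u\|_{L^2}^2+\|\sqrt\rho\dot w\|_{L^2}^2$ and $B$ is the dissipation in \eqref{3.3}. Gronwall's inequality on $[0,T]$ then gives \eqref{3.3}.
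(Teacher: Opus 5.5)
Most of your outline follows the paper's proof. You differentiate both equations along the flow and test with $\dot u,\dot w$. You merge the two coupling terms with $4\xi\|\dot w\|_{L^2}^2$ into $\xi\|\nabla^\perp\!\cdot\dot u-2\dot w\|_{L^2}^2$. You treat the $[D_t,\Delta]w$ commutators through the elliptic bound on $\Delta w$, obtain the $\nu$-uniform $L^4_tL^4_x$ bound on $\nabla u$ from the Helmholtz decomposition, and work with $\dot Z$ rather than $\operatorname{div}\dot u$.

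\textbf{The gap.} It lies in the step that decides $\nu$-uniformity, the term $-\nu\int t\,\dot Z\,\partial_iu\cdot\nabla u_i\,dx$, and your primary plan for it cannot work.
\begin{itemize}
\item The only control on $\dot Z$ is the dissipation $\nu t\|\dot Z\|_{L^2}^2$. Any purely spatial argument (Young, or $\mathcal H^1$--BMO duality) therefore leaves a remainder of size $\nu$ times a norm of $\partial_iu\cdot\nabla u_i$.
\item The $Z^2$ part of that quantity is harmless. The Jacobian part, however, contains $\det\nabla\mathcal Pu$, which is $O(1)$ for large data, not $O(\nu^{-1/2})$.
\item Pairing the Jacobian with $\dot Z$ via \eqref{BMO} would require $\dot Z\in\mathrm{BMO}$, which is not controlled.
\item There is also no spatial derivative to move. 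Moreover $\nabla Z=\nu^{-1}(\nabla\mathcal G+\nabla P)$ involves $\nabla\rho$, which is unavailable for a merely bounded density.
\end{itemize}

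\textbf{The fix.} The fallback you mention in passing is what works, and it is exactly what the paper does. Note first that $D_t\mathcal G=\nu\dot Z+\rho P'(\rho)Z=\nu\dot Z+\gamma PZ$, not $\nu\dot Z+\Lambda Z$. With $f:=\partial_iu\cdot\nabla u_i$, integrating by parts in time gives
\[
-\nu\int\dot Z f\,dx=-\frac{d}{dt}\int\mathcal G f\,dx+2\int\mathcal G\,\partial_i\dot u_j\,\partial_ju_i\,dx-2\int\mathcal G\,\partial_iu_k\,\partial_ku_j\,\partial_ju_i\,dx+\int(\mathcal G+\gamma P)Zf\,dx .
\]
This trades the large factor $\nu$ for $\mathcal G$, which is uniformly controlled through $\|\nabla\mathcal G\|_{L^2}\le\sqrt{\rho^*}\|\sqrt\rho\dot u\|_{L^2}$ and $|\overline{\mathcal G}|\le P^*$. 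The bulk terms are then bounded by $\frac1{32}(\mu\|\nabla^\perp\!\cdot\dot u\|_{L^2}^2+\nu\|\dot Z\|_{L^2}^2)+C(1+\|\nabla u\|_{L^4}^4+\|\sqrt\rho\dot u\|_{L^2}^4)$.

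\textbf{Where your $Z^2$/Jacobian split belongs.} It is the right tool for the resulting boundary term, not for the $\dot Z$ term. One has $\int\mathcal GZ^2\,dx=\nu^{-1}\int\mathcal G(\mathcal G+P)Z\,dx$. For the Jacobian part, \eqref{BMO} gives $\bigl|\int\mathcal G\,\nabla^\perp u_2\cdot\nabla u_1\,dx\bigr|\lesssim\|\nabla\mathcal G\|_{L^2}\|\nabla u\|_{L^2}^2\lesssim\|\sqrt\rho\dot u\|_{L^2}$. Together these yield $\bigl|2\int\mathcal G f\,dx\bigr|\le C+\tfrac14\|\sqrt\rho\dot u\|_{L^2}^2$.

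\textbf{Consequence for your final inequality.} As written it is not the right one. After multiplying by $t$, its right-hand side must carry the total derivative $-2\frac{d}{dt}\bigl(t\int\mathcal G f\,dx\bigr)$, plus the integrable unweighted term $2\int\mathcal G f\,dx$. You then integrate in time and absorb $\tfrac14 t\|\sqrt\rho\dot u\|_{L^2}^2$ into the left side, and only then apply Gronwall.
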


\begin{proof}

The proof proceeds in several steps.
\bigbreak

\textbf{Step 1. Differential identities for $\|\sqrt{\rho}\dot{u}\|_{L^2}^2$ and $\|\sqrt{\rho}\dot{w}\|_{L^2}^2$.}

\bigbreak

We begin by rewriting the momentum equation $\eqref{0}_2$   in the equivalent form
\begin{equation}\label{3.4_1}
\rho \dot{u} + \nabla P(\rho) = (\mu + \xi)\nabla^{\perp}(\nabla^{\perp}\cdot u) + \nu\nabla \operatorname{div}u - 2\xi \nabla^{\perp} w.
\end{equation}
Applying   $\partial_{t} + \operatorname{div}(u\cdot)$ to both sides of \eqref{3.4_1}  and taking the  $L^{2}$ inner product with $\dot{u}$, we obtain
\begin{align}\label{3.5_1}
\frac{1}{2}\frac{d}{dt}\|\sqrt{\rho}\,\dot{u}\|_{L^{2}}^{2} = &(\mu + \xi)\int \dot{u}\cdot\Bigl[\nabla^{\perp}(\nabla^{\perp}\cdot u_{t}) + \operatorname{div}\bigl(u\nabla^{\perp}(\nabla^{\perp}\cdot u)\bigr)\Bigr]\,dx  \nonumber\\
&-\int\dot{u}\cdot\bigl(\nabla P_{t} + \operatorname{div}(u\nabla P)\bigr)\,dx
 -2\xi\int \dot{u}\cdot\Bigl(\partial_{t}(\nabla^{\perp}w) + \operatorname{div}(u\otimes\nabla^{\perp}w)\Bigr)\,dx\nonumber\\
 &+\nu\int \dot{u}\cdot\bigl(\nabla \operatorname{div} u_{t} + \operatorname{div}(u\nabla \operatorname{div}u)\bigr)\,dx.
\end{align}
We carry out an analogous procedure for the angular momentum equation  $\eqref{0}_3$. After rewriting the equation, applying the operator $\partial_{t} + \operatorname{div}(u\,\cdot)$, and testing with $\dot{w}$, we arrive at
\begin{align}\label{3.5_2}
   \frac{1}{2}\frac{d}{dt}\|\sqrt{\rho}\,\dot{w}\|_{L^{2}}^{2}+4\xi\|\dot{w}\|_{L^{2}}^{2}  = &-4\xi\int w\operatorname{div}u\,\dot{w}\,dx
    +\mu_2\int\bigl[\Delta w_t + \operatorname{div}(u\Delta w)\bigr]\dot{w}\,dx\nonumber\\
&+2\xi\int\bigl[\partial_t(\nabla^{\perp}\cdot u) + \operatorname{div}\left(u(\nabla^\perp\cdot u))\right)\bigr]\dot{w}\,dx,
\end{align}
where we have used 
$(\partial_{t} + \operatorname{div}(u\,\cdot))(4\xi w)=4\xi(\dot{w}+w\operatorname{div}u)$. 
Combining \eqref{3.5_1} with \eqref{3.5_2} yields
\begin{align}\label{3.5_3}
    &\frac{1}{2}\frac{d}{dt}\|\sqrt{\rho}\,\dot{u}\|_{L^{2}}^{2}+ \frac{1}{2}\frac{d}{dt}\|\sqrt{\rho}\,\dot{w}\|_{L^{2}}^{2}+4\xi\|\dot{w}\|_{L^{2}}^{2}  \nonumber\\
    = &(\mu + \xi)\int  \dot{u}\cdot\Bigl[\nabla^{\perp}(\nabla^{\perp}\cdot u_{t}) + \operatorname{div}\bigl(u\nabla^{\perp}(\nabla^{\perp}\cdot u)\bigr)\Bigr]\,dx +\nu\int  \dot{u}\cdot\bigl[\nabla \operatorname{div} u_{t} + \operatorname{div}(u\nabla \operatorname{div}u)\bigr]\,dx\nonumber\\
&-\int  \dot{u}\cdot\bigl[\nabla P_{t} + \operatorname{div}(u\nabla P)\bigr]\,dx-2\xi\int  \dot{u}\cdot\Bigl[\partial_{t}(\nabla^{\perp}w) + \operatorname{div}(u\otimes\nabla^{\perp}w)\Bigr]\,dx\nonumber\\
&+\mu_2\int \dot{w}\cdot\bigl[\Delta w_t + \operatorname{div}(u\Delta w)\bigr]\,dx+2\xi\int \dot{w}\cdot\bigl[\partial_t(\nabla^{\perp}\cdot u) + \operatorname{div}(u(\nabla^\perp\cdot u))\bigr]\,dx\nonumber\\
&-4\xi\int  w\operatorname{div}u\,\dot{w}\,dx:=Y_1+\cdots+Y_7.
\end{align}
We now estimate each term on the right-hand side of \eqref{3.5_3}. To this end, we repeatedly use the fact that, for all $\nu\geq \nu_0$ with $\nu_0$ given in Propositions \ref{prop1} and \ref{prop2},
\begin{align}\label{zhongyao1}
     \|\nabla u\|_{L^4(0,T;L^4(\mathbb{T}^2))}   \leq C_0.
\end{align}
This follows from the Helmholtz decomposition
$
u = \mathcal{P}u - \frac{1}{\nu} \nabla(-\Delta)^{-1}(\tilde{\mathcal{G}} + \tilde{P})$,
together with standard interpolation inequalities. Indeed, Proposition \ref{prop1} and the two-dimensional Sobolev embedding $H^1\hookrightarrow L^4$ imply
\begin{align*}
\|\nabla \mathcal{P} u\|_{L^4(0,T;L^4(\mathbb{T}^2))} \lesssim \|\nabla \mathcal{P} u\|_{L^\infty(0,T;L^2)}^{1/2} \|\nabla^2 \mathcal{P}u\|_{L^2(0,T;L^2)}^{1/2} \leq C_0.
\end{align*}
Moreover, the viscous effective flux and pressure fluctuations satisfy, respectively,
\begin{align*}
\|\tilde{\mathcal{G}}\|_{L^4(0,T;L^4(\mathbb{T}^2))} \lesssim \|\tilde{\mathcal{G}}\|_{L^\infty(0,T;L^2)}^{1/2} \|\nabla \mathcal{G}\|_{L^2(0,T;L^2)}^{1/2} \leq \nu^{1/4} C_0,
\end{align*}
and
\begin{align*}
\|\tilde{P}\|_{L^4(0,T;L^4(\mathbb{T}^2))} \leq \|\tilde{P}\|_{L^2(0,T;L^2)}^{1/2} \|\tilde{P}\|_{L^\infty(0,T;L^\infty)}^{1/2} \leq \nu^{1/4} C_0.
\end{align*}
 The terms $Y_1$-$Y_3$ are estimated similarly to the Navier-Stokes equations in \cite{liu2026}. For brevity, we omit redundant details. For $Y_1$, we have
\begin{align}\label{5.7}
Y_1 = -(\mu+\xi) \|\nabla^\perp \cdot \dot{u}\|_{L^2}^2 + (\mu+\xi) \int (\nabla^\perp \cdot u)\nabla \dot{u} : (\nabla^\perp \otimes u) \, dx,
\end{align}
where $\nabla^\perp \otimes u$ denotes the tensor product of $\nabla^\perp$ and $u$. Regarding $Y_2$ and $Y_3$, we obtain
\begin{align}\label{5.12}
Y_2 + Y_3
=& -\nu \int |\dot{Z}|^2 dx - \nu \int \dot{Z} \partial_i u \cdot \nabla u_i dx - \gamma \int P (\operatorname{div} \dot{u})Z\, dx \notag \\
&+ \int \partial_i \dot{u} \cdot \left( \mathcal{G}\nabla u_i \right) dx + \int Z \dot{u} \cdot \nabla \mathcal{G}\, dx   + \int \mathcal{G} \dot{u} \cdot \nabla Z\, dx.
\end{align}
Thus, inserting  \eqref{5.7} and \eqref{5.12} into \eqref{3.5_3}, we conclude that
\begin{align}\label{5.13}
&\frac{1}{2} \frac{d}{dt} \left\| \sqrt{\rho} \dot{u} \right\|_{L^2}^2+\frac{1}{2}\frac{d}{dt}\left\| \sqrt{\rho} \dot{w} \right\|_{L^2}^2  + ( \mu+\xi) \left\| \nabla^\perp \cdot \dot{u} \right\|_{L^2}^2 + \nu \left\| \dot{Z} \right\|_{L^2}^2 +4\xi\|\dot{w}\|_{L^2}^2 \notag \\
= &(\mu+\xi) \int (\nabla^\perp \cdot u)\nabla \dot{u} : (\nabla^\perp \otimes u) dx - \gamma \int P (\operatorname{div} \dot{u})Z dx \notag \\
&+ \int \partial_i \dot{u} \cdot \left( \mathcal{G}\nabla u_i \right) dx + \int Z \dot{u} \cdot \nabla \mathcal{G} dx
 + \int \mathcal{G} \dot{u} \cdot \nabla Z dx - \nu \int \dot{Z} \partial_i u \cdot \nabla u_i dx \notag \\
&-2\xi\int\dot{u}\cdot\left(\nabla^{\perp}w_t+\operatorname{div}(u\otimes \nabla^{\perp}w )\right)\,dx+\mu_2\int \dot{w}\cdot\bigl[\Delta w_t + \operatorname{div}(u\Delta w)\bigr]\,dx         \notag \\
&+2\xi\int \dot{w}\cdot\bigl[\partial_t(\nabla^{\perp}\cdot u) + \operatorname{div}(u(\nabla^\perp\cdot u))\bigr]\,dx-4\xi\int  w Z\,\dot{w}\,dx := L_1 + \dots + L_{10}.
\end{align}

\bigbreak

\textbf{Step 2. Estimates for  terms $L_1-L_6$.}
\bigbreak
Observing that
\begin{align}\label{5.11}
\operatorname{div} \dot{u} = (\operatorname{div} u)_t + \operatorname{div}\,(u \cdot \nabla u)= \dot{Z} + \partial_i u \cdot \nabla u_i,
\end{align}
 we start the estimation of the right-hand side of \eqref{5.13} by establishing the following key inequality:
\begin{align}\label{5.14}
    \left\| \dot{Z} \right\|_{L^2} - \left\| \nabla u \right\|_{L^4}^2 \leq \left\| \operatorname{div} \dot{u} \right\|_{L^2} \leq \left\| \dot{Z} \right\|_{L^2} + \left\| \nabla u \right\|_{L^4}^2.
\end{align}
Combining  \eqref{ding2.5} with \eqref{5.14},  we obtain
\begin{align}\label{5.15}
    |L_1| &\leq  C\|\nabla u\|_{L^4}^2 \|\nabla \dot{u}\|_{L^2}
\leq C\|\nabla u\|_{L^4}^2 \left( \|\nabla^\perp \cdot \dot{u}\|_{L^2} + \|\operatorname{div} \dot{u}\|_{L^2} \right)
\nonumber\\
&\leq  \frac{1}{32} \left( \mu\|\nabla^\perp \cdot \dot{u}\|_{L^2}^2 + \nu\|\dot{Z}\|_{L^2}^2 \right) + C\|\nabla u\|_{L^4}^4.
\end{align}
In light of \eqref{shangquejie}, \eqref{3.5} and \eqref{5.14}, it is straightforward to see that
\begin{align}\label{5.16}
    |L_2|\leq C\|\operatorname{div}\dot{u}\|_{L^2}\|Z\|_{L^2}\leq \frac{\nu}{32}\|\dot{Z}\|_{L^2}^2+C(1+\|\nabla u\|_{L^4}^4).
\end{align}
Based on elliptic regularity extracted from \eqref{3.4_1}, we first have
\begin{align} \label{elliptic_equation}
\Delta \mathcal{G} = \operatorname{div}(\rho \dot{u}) \quad \text{and} \quad \Delta(\nabla^\perp \cdot u) = \frac{1}{\mu+\xi}\nabla^\perp \cdot (\rho \dot{u})+\frac{2\xi}{\mu+\xi}\Delta w.
\end{align}
Consequently, using  \eqref{shangquejie} together with the Poincar\'{e} inequality $\|\mathcal{G}\|_{L^2} \leq C(1+\|\nabla \mathcal{G}\|_{L^2})$, we deduce
\begin{align}\label{5.18_1}
\|\mathcal{G}\|_{L^2} + \|\nabla \mathcal{G}\|_{L^2}  \leq C(1+\|\sqrt{\rho} \dot{u}\|_{L^2}),
\end{align}
as well as
\begin{align}\label{5.18_2}
\|\mathcal{G}\|_{L^2} + \|\nabla \mathcal{G}\|_{L^2} + \|\nabla(\nabla^\perp \cdot u)\|_{L^2} \leq C(1+\|\sqrt{\rho} \dot{u}\|_{L^2})+C\|\nabla w\|_{L^2}.
\end{align}
The terms $L_3$-$L_6$ are estimated analogously to the Navier-Stokes case in \cite{liu2026}, and we skip the full details for brevity. Combining
 \eqref{ding2.5}, \eqref{ding2.4}, \eqref{shangquejie}, \eqref{3.5}, \eqref{5.14} and \eqref{5.18_1}  yields 
\begin{align}\label{5.19}
\begin{cases}
    |L_3| \leq \frac{1}{32} \left( \mu\|\nabla^\perp \cdot \dot{u}\|_{L^2}^2 + \nu\|\dot{Z}\|_{L^2}^2 \right)
+ C \left( \|\nabla u\|_{L^4}^4 + \|\sqrt{\rho} \dot{u}\|_{L^2}^4 \right)\\
|L_4|  \leq \frac{1}{32} \left( \mu\|\nabla^\perp \cdot \dot{u}\|_{L^2}^2 + \nu\|\dot{Z}\|_{L^2}^2 \right)
+ C \left( 1 + \|\nabla u\|_{L^4}^4 + \|\sqrt{\rho} \dot{u}\|_{L^2}^4 \right),\\
|L_5|\leq \frac{1}{32} \left( \mu\|\nabla^\perp \cdot \dot{u}\|_{L^2}^2 + \nu\|\dot{Z}\|_{L^2}^2 \right)
+ C \left( 1 + \|\nabla u\|_{L^4}^4 + \|\sqrt{\rho} \dot{u}\|_{L^2}^4 \right),
\end{cases}
\end{align}
and
\begin{align}\label{5.31}
L_6  &\leq -\frac{d}{dt} \int \mathcal{G} \partial_i u \cdot \nabla u_i \, dx + \frac{1}{32} \left( \mu\|\nabla^\perp \cdot \dot{u}\|_{L^2}^2 + \nu\|\dot{Z}\|_{L^2}^2 \right)
  + C \left( 1 + \|\nabla u\|_{L^4}^4 + \|\sqrt{\rho} \dot{u}\|_{L^2}^4 \right).
\end{align}

\bigbreak

\textbf{Step 3. Estimates for the terms $L_7-L_{10}$.}
\bigbreak

It remains to estimate $L_7$, $L_8$, $L_9$ and $L_{10}$. 
Note that
\[
\partial_t(\nabla^\perp w)
+\operatorname{div}(u\otimes\nabla^\perp w)
=
\nabla^\perp\dot w
+(\nabla^\perp w\cdot\nabla)u,
\]
and
$\partial_t(\nabla^\perp\!\cdot u)
+\operatorname{div}\bigl(u(\nabla^\perp\!\cdot u)\bigr)
=
\nabla^\perp\!\cdot\dot u$,
it follows that
\begin{align}\label{7+9}
L_7+L_9
=&-2\xi\int\nabla^\perp\dot w\cdot\dot u\,dx
-2\xi\int
\bigl((\nabla^\perp w\cdot\nabla)u\bigr)\cdot\dot u\,dx
+2\xi\int
(\nabla^\perp\!\cdot\dot u)\dot w\,dx
\nonumber\\
=&4\xi\int
(\nabla^\perp\!\cdot\dot u)\dot w\,dx
-2\xi\int
\bigl((\nabla^\perp w\cdot\nabla)u\bigr)\cdot\dot u\,dx.
\end{align}
For the last integral in \eqref{7+9}, using \eqref{ding2.4} and \eqref{5.14} yields
\begin{align}\label{7+91}
\left|
2\xi\int
\bigl((\nabla^\perp w\cdot\nabla)u\bigr)\cdot\dot u\,dx
\right|
\leq&
C\|\nabla w\|_{L^2}
\|\nabla u\|_{L^4}
\|\dot u\|_{L^4}\nonumber\\
   \leq &C\|\nabla u\|_{L^4}(\|\sqrt{\rho}\dot{u}\|_{L^2}+\|\nabla \dot{u}\|_{L^2})\nonumber\\
   \leq & C\|\nabla u\|_{L^4} (\|\sqrt{\rho}\dot{u}\|_{L^2}+\|\operatorname{div}\dot{u}\|_{L^2}+\|\nabla^{\perp}\cdot\dot{u}\|_{L^2})\nonumber\\
   \leq &C\|\nabla u\|_{L^4}(\|\sqrt{\rho}\dot{u}\|_{L^2}+\|\dot{Z}\|_{L^2}+\|\nabla u\|_{L^4}^2+\|\nabla^{\perp}\cdot \dot{u}\|_{L^2})\nonumber\\
   \leq & \frac{1}{32}(\mu\|\nabla^{\perp}\cdot \dot{u}\|_{L^2}^2+\nu\|\dot{Z}\|_{L^2}^2)+C(1+\|\nabla u\|_{L^4}^4+\|\sqrt{\rho}\dot{u}\|_{L^2}^4).
\end{align}
Then combining \eqref{7+9} and \eqref{7+91}, one has
\begin{align}\label{7+92}
    L_7+L_9\leq 4\xi\int \nabla^{\perp}\cdot\dot{u}\cdot\dot{w}\, dx+\frac{1}{32}(\mu\|\nabla^{\perp}\cdot \dot{u}\|_{L^2}^2+\nu\|\dot{Z}\|_{L^2}^2)+C(1+\|\nabla u\|_{L^4}^4+\|\sqrt{\rho}\dot{u}\|_{L^2}^4).
\end{align}
We now estimate $L_8$. First, note the identity
\begin{align}\label{N89}
(\partial_t+\operatorname{div}(u\cdot))(\Delta w)=\Delta \dot{w}-2(\nabla u):(\nabla^2 w) -(\Delta u)\cdot (\nabla w)+(\Delta w)\operatorname{div}u.
\end{align}
It follows that
\begin{align}\label{N90}
    L_8&=\mu_2\int \Delta \dot{w}\cdot\dot{w}\,dx-\mu_2\int(2\nabla u:\nabla^2w+\Delta u\cdot\nabla w-\Delta w\operatorname{div}u )\dot{w}\,dx\nonumber\\
    &=-\mu_2\|\nabla \dot{w}\|_{L^2}^2-\mu_2\int(2\nabla u:\nabla^2w+\Delta u\cdot\nabla w-\Delta w\operatorname{div}u )\dot{w}\,dx.
\end{align}
To bound  the  integral in \eqref{N90}, we rewrite   \eqref{0}$_3$ to obtain
\begin{align}\label{N8w}
    \|\Delta w\|_{L^2}\leq C(\mu_2,\xi,\rho^*)(\|\sqrt{\rho}\dot{w}\|_{L^2} +\| w\|_{L^2}+\|\nabla^{\perp}\cdot u\|_{L^2}).
\end{align}
For the term containing $\Delta u$, we first integrate by parts:
\begin{align*}
\int \Delta u_j\,\partial_j w\,\dot w\,dx
=-\int \partial_i u_j\,\partial_{ij}w\,\dot w\,dx-\int \partial_i u_j\,\partial_jw\,
\partial_i\dot w\,dx.
\end{align*}
Then, using Lemma~\ref{lemma2.2}, Proposition~\ref{prop1},  $\|\nabla w\|_{L^4}
\leq C\|\nabla w\|_{L^2}^{1/2}
\|\Delta w\|_{L^2}^{1/2}$,
together with \eqref{N8w} and Young's inequality, we arrive at
\begin{align*} 
    &-\mu_2\int(2\nabla u:\nabla^2w+\Delta u\cdot\nabla w-\Delta w\operatorname{div}u )\dot{w}\,dx\nonumber\\
    \leq& C\left(\|\Delta w\|_{L^2}\|\dot{w}\|_{L^4}\|\nabla u\|_{L^4}+\|\nabla u\|_{L^4}
\|\nabla w\|_{L^4}
\|\nabla\dot w\|_{L^2}\right) \nonumber\\
    \leq&  \frac{\mu_2}{4}\|\nabla \dot{w}\|_{L^2}^2+C(1+\|\Delta w\|_{L^2}^4+\|\nabla u\|_{L^4}^4)\nonumber\\
    \leq& \frac{\mu_2}{4}\|\nabla \dot{w}\|_{L^2}^2+C(1+\|\sqrt{\rho}\dot{w}\|^4_{L^2}+\|w\|^4_{L^2}+\|\nabla^{\perp}\cdot u\|^4_{L^2}+\|\nabla u\|_{L^4}^4).
\end{align*}
Thus,
\begin{align}\label{N9N}
    L_8\leq-\frac{3\mu_2}{4}\|\nabla \dot{w}\|_{L^2}^2+C\left(1+\|\sqrt{\rho}\dot{w}\|^4_{L^2}+\|w\|^4_{L^2}+\|\nabla^{\perp}\cdot u\|^4_{L^2}+\|\nabla u\|_{L^4}^4\right).
\end{align}
 Analogously,
\begin{align}\label{N10}
    |L_{10}|&\leq C \|\dot{w}\|_{L^4}\|\nabla u\|_{L^4}\|w\|_{L^2}\nonumber\\
    &\leq \frac{\mu_2}{4}\|\nabla \dot{w}\|_{L^2}^2+C\left(\|\sqrt{\rho}\dot{w}\|^4_{L^2}+\|w\|^4_{L^2}+\|\nabla u\|_{L^4}^4\right).
\end{align}

\bigbreak

\textbf{Step 4. Proof of \eqref{3.3}.}
\bigbreak

Now, inserting \eqref{5.15}, \eqref{5.16}, \eqref{5.19}, \eqref{5.31}, \eqref{7+92}, \eqref{N9N}  and \eqref{N10} into \eqref{5.13}, we derive
\begin{align}\label{5.32}
 &\frac{d}{dt} \left\| \sqrt{\rho} \dot{u} \right\|_{L^2}^2+\frac{d}{dt}\left\| \sqrt{\rho} \dot{w} \right\|_{L^2}^2  + \mu \left\| \nabla^\perp \cdot \dot{u} \right\|_{L^2}^2 + \nu \left\| \dot{Z} \right\|_{L^2}^2 +\mu_2\|\nabla \dot{w}\|_{L^2}^2+2\xi\|\nabla^{\perp}\cdot \dot{u}-2\dot{w}\|_{L^2}^2 \notag \\
\leq &-2\frac{d}{dt}\int \mathcal{G}\partial_i u\cdot\nabla u_i\, dx +C\left(1+\|\nabla u\|_{L^4}^4+\|\sqrt{\rho} \dot{u}\|_{L^2}^4+\|\sqrt{\rho}\dot{w}\|_{L^2}^4\right) .
\end{align}
It remains to control the time-derivative term on the right-hand side of \eqref{5.32}. Since
$\partial_i u \cdot \nabla u_i = Z^2 + 2\nabla^\perp u_2 \cdot \nabla u_1$,
we decompose the integral as
\begin{align}\label{5.33}
2 \int \mathcal{G} \partial_i u \cdot \nabla u_i \, dx &= 2 \int \mathcal{G} Z^2 \, dx + 4 \int \mathcal{G} \nabla^\perp u_2 \cdot \nabla u_1 \, dx.
\end{align}
Recalling that $\operatorname{div} \, u = \nu^{-1}(\mathcal{G}+P)$ and using  $\nu^{-1}\|\mathcal{G}\|_{L^2} \leq C$   from \eqref{shangquejie} and \eqref{3.5}, together with Lemma \ref{GNS} and \eqref{5.18_1}, we obtain
\begin{align}\label{5.34}
\left| 2 \int \mathcal{G} Z^2 \, dx \right|  &= \left| \frac{2}{\nu} \int \mathcal{G} (\mathcal{G} + P ) Z dx \right|\nonumber\\
&\leq C \left( \nu^{-1}\|\mathcal{G}\|_{L^4}^2 + \nu^{-1}\|\mathcal{G}\|_{L^2} \right) \|\mathcal{G}\|_{L^2} \notag \\
&\leq C \left( \nu^{-1}\|\mathcal{G}\|_{L^2}\|\nabla \mathcal{G}\|_{L^2} + \nu^{-1}\|\mathcal{G}\|_{L^2} \right) \|\mathcal{G}\|_{L^2} \leq C + \frac{1}{4}\left\| \sqrt{\rho} \dot{u} \right\|^2_{L^2}.
\end{align}
Then, since $\operatorname{BMO}(\mathbb{T}^2)$ and $\mathcal{H}^1(\mathbb{T}^2)$ are dual spaces, together with $\nabla^\perp \cdot (\nabla u_1) = \operatorname{div}(\nabla^\perp u_2) = 0$, we have via  \eqref{BMO}, \eqref{3.5} and \eqref{5.18_1}
\begin{align}\label{5.35}
\left| 4 \int \mathcal{G} \nabla^\perp u_2 \cdot \nabla u_1 \, dx \right| &\leq C\|\mathcal{G}\|_{\operatorname{BMO}}\|\nabla^\perp u_2 \cdot \nabla u_1\|_{\mathcal{H}^1}
 \leq C\|\nabla \mathcal{G}\|_{L^2}\|\nabla u\|_{L^2}^2
 \leq C\left\| \sqrt{\rho} \dot{u} \right\|_{L^2}.
\end{align}
Consequently, substituting \eqref{5.34} and \eqref{5.35} into \eqref{5.33}, we conclude
\begin{align}\label{5.36}
\left| 2 \int \mathcal{G} \partial_i u \cdot \nabla u_i \, dx \right| &\leq C + \frac{1}{4}\left\|\sqrt{\rho} \dot{u} \right\|^2_{L^2}.
\end{align}
Finally, multiplying \eqref{5.32} by $t$, integrating over $(0, T)$, and applying \eqref{3.5}, \eqref{5.36} together with the Cauchy-Schwarz and Gronwall inequalities, we complete the proof of the desired estimate \eqref{3.3}.
\end{proof}

The following corollary follows immediately from Propositions \ref{prop1}--\ref{prop3}, and is fundamental to establishing Theorem \ref{result1}.
\begin{cor}\label{cor3.1}
 Suppose the conditions of Proposition \ref{prop3} are satisfied. Then, for any $2< q<\infty$ and $1<p<\frac{q}{q-1}$, there exists a constant $C(p,q, T)>0$ such that
\begin{align}\label{5.104}
\int_{0}^{T}
\biggl(
&\|\rho \dot{u}\|_{L^q}^{p} + \|\nabla \mathcal{G}\|_{L^q}^{p}
+ \|\nabla(\nabla^\perp \cdot u)\|_{L^q}^{p}+\|\rho \dot{w}\|_{L^q}^{p}
\biggr) dt\nonumber\\
&+\int_{0}^{T}
\biggl(
\|\mathcal{G}\|_{L^\infty}^{p} + \|\nabla^\perp \cdot u\|_{L^\infty}^{p} + \nu^{p}\|\mathrm{div}\,u\|_{L^\infty}^{p}+\|w\|^{p}_{L^{\infty}}
\biggr) dt \leq C(p,q, T).
\end{align}

\end{cor}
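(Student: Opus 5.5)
The plan is to combine the time-weighted bound \eqref{3.3} of Proposition~\ref{prop3} with the unweighted bounds of Proposition~\ref{prop1}, and then use elliptic regularity.

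\textbf{Step 1: $L^q$ bounds for $\rho\dot u$ and $\rho\dot w$.} By Lemma~\ref{lemma2.2} and the density bound \eqref{shangquejie},
\[
\|\rho\dot u\|_{L^q}\le \rho^*\|\dot u\|_{L^q}\le C\big(\|\sqrt\rho\dot u\|_{L^2}+\|\nabla\dot u\|_{L^2}\big).
\]
By \eqref{ding2.5} and \eqref{5.14}, $\|\nabla\dot u\|_{L^2}\le C(\|\nabla^\perp\!\cdot\dot u\|_{L^2}+\|\dot Z\|_{L^2}+\|\nabla u\|_{L^4}^2)$. This crude bound does not exploit $\nu$; it only uses $\nu\|\dot Z\|^2_{L^2}\ge \nu_0\|\dot Z\|^2_{L^2}$.

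Interpolating in $L^q$ between $L^2$ and the $H^1$ bound gives
\[
\|\rho\dot u\|_{L^q}\le C\|\sqrt\rho\dot u\|_{L^2}^{2/q}\,\big(\|\sqrt\rho\dot u\|_{L^2}+\|\nabla\dot u\|_{L^2}\big)^{1-2/q}.
\]
Here we use $\|\rho\dot u\|_{L^2}\le C\|\sqrt\rho\dot u\|_{L^2}$. The main obstacle is the time integrability near $t=0$. The factor $\|\sqrt\rho\dot u\|_{L^2}$ is only in $L^2(0,T)$ by Proposition~\ref{prop1}. The term $\|\nabla\dot u\|_{L^2}$ is controlled only as $t^{-1/2}$ times an $L^2_t$ function by \eqref{3.3}, while $\|\nabla u\|^2_{L^4}\in L^2_t$ by \eqref{zhongyao1}.

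Write $\|\nabla\dot u\|_{L^2}=t^{-1/2}\cdot(\sqrt t\|\nabla\dot u\|_{L^2})$ and apply H\"older with exponents $\frac{2}{p\cdot 2/q}$, $\frac{2}{p(1-2/q)}$, and a third exponent for $t^{-p(1-2/q)/2}$. The resulting $t$-power is integrable exactly when
\[
\frac{p}{2}\Big(1-\frac2q\Big)\cdot\frac{2}{2-p}<1,
\]
which reduces to $p<\frac{q}{q-1}$. This matches the stated range. Near $t=0$ one may also interpolate using $\sup_t t\|\sqrt\rho\dot u\|^2_{L^2}$, but the simple H\"older computation already yields the sharp constraint. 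The same argument applies verbatim to $\rho\dot w$, using $\sqrt t\,\nabla\dot w\in L^2_tL^2_x$ and Lemma~\ref{lemma2.2} for $\dot w$.

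\textbf{Step 2: elliptic bounds.} By \eqref{elliptic_equation}, $\Delta\mathcal G=\operatorname{div}(\rho\dot u)$, so Calder\'on--Zygmund gives $\|\nabla\mathcal G\|_{L^q}\le C\|\rho\dot u\|_{L^q}$. For the vorticity, $\|\nabla(\nabla^\perp\!\cdot u)\|_{L^q}\le C(\|\rho\dot u\|_{L^q}+\|\nabla w\|_{L^q})$. From the $w$ equation,
\[
\|\nabla^2 w\|_{L^q}\le C\big(\|\rho\dot w\|_{L^q}+\|w\|_{L^q}+\|\nabla^\perp\!\cdot u\|_{L^q}\big),
\]
and the last two terms are bounded by $\|w\|_{H^1}$ and $\|\nabla^\perp\!\cdot u\|_{L^2}^{2/q}\|\nabla(\nabla^\perp\!\cdot u)\|_{L^2}^{1-2/q}$. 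These lie in $L^2_t\subset L^p_t$ on $(0,T)$ by Proposition~\ref{prop1}. Hence $\|\nabla w\|_{L^q}\le C\|w\|_{H^2}$ is also controlled in $L^p_t$.

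\textbf{Step 3: $L^\infty$ bounds.} Apply \eqref{ding2.2} with $r=q$ to $\mathcal G$, $\nabla^\perp\!\cdot u$ and $w$. This gives $\|\mathcal G\|_{L^\infty}\le C\|\mathcal G\|_{L^2}^{\theta}\|\nabla\mathcal G\|_{L^q}^{1-\theta}+C\|\mathcal G\|_{L^2}$, and $\|\mathcal G\|_{L^2}\le C(1+\|\sqrt\rho\dot u\|_{L^2})\in L^2_t$ by \eqref{5.18_1}. H\"older then places it in $L^p_t$. The cases $\nabla^\perp\!\cdot u$ (zero mean, bounded in $L^\infty_tL^2$) and $w$ (via $H^2$ or $W^{1,q}$) are similar.

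Finally, $\nu\operatorname{div}u=\mathcal G+P$ and $0\le P\le P^*$, so $\nu\|\operatorname{div}u\|_{L^\infty}\le\|\mathcal G\|_{L^\infty}+P^*$. All constants are independent of $\nu$, since every ingredient (Propositions~\ref{prop1}--\ref{prop3} and \eqref{zhongyao1}) is uniform for $\nu\ge\nu_0$.
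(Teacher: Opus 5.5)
Your proposal is correct and takes essentially the paper's route. You interpolate $\rho\dot u$ and $\rho\dot w$ between $L^2$ and $L^{q_*}$, bound the $L^{q_*}$ norm via Lemma~\ref{lemma2.2}, apply H\"older in time against $\sqrt t\,\|\nabla\dot w\|_{L^2}\in L^2(0,T)$ (and its analogue for $\dot u$), and get the remaining quantities from \eqref{elliptic_equation} and \eqref{ding2.2}.

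The one correction is that the exponent exactly $2/q$ would require $q_*=\infty$, which $H^1$ does not reach in two dimensions. Take $q_*<\infty$ large, so that $\theta<2/q$ is close to $2/q$. Since your condition $p(2-\theta)<2$ (the paper's $\theta>\frac{2(p-1)}{p}$) is open, this still covers every $p<\frac{q}{q-1}$.
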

\begin{proof}

First, for any $2 < q < \infty$ and  $1<p<\frac{q}{q-1}$, we can choose a constant $C=C(p,q,T)$ such that 
\begin{align}\label{5.108}
&\int_{0}^{T} \biggl(\|\rho \dot{u}\|_{L^q}^{p} + \|\nabla \mathcal{G}\|_{L^q}^{p}
+ \|\nabla(\nabla^\perp \cdot u)\|_{L^q}^{p} \nonumber \\
& \quad + \|\mathcal{G}\|_{L^\infty}^{p} + \|\nabla^\perp \cdot u\|_{L^\infty}^{p} +  \nu^{p}\|\mathrm{div}\,u\|_{L^\infty}^{p}
\biggr)   \mathrm{d}t \leq C(q, p,T).
\end{align}
Since the estimate of \eqref{5.108}  follows from the argument in \cite{liu2026}, we omit the details here for brevity.

In what follows, we focus on the estimates of $\|\rho \dot{w}\|_{L^q}^{p}$  and $\|w\|^{p}_{L^{\infty}}$. Since $2 < q < \infty$, a combination of  the interpolation inequality and \eqref{shangquejie} ensures the existence of $0 < \theta < 1$ and $q_{*}> q$, where $q_{*}$ depends on $q$ and $\theta$, such that
\begin{align}
\|\rho \dot{w}\|_{L^q}
\leq C\|\rho \dot{w}\|_{L^2}^{\theta}\|\rho\dot{w}\|_{L^{q_*}}^{1-\theta} \leq C\|\sqrt{\rho} \dot{w}\|_{L^2}^{\theta}\|\dot{w}\|_{L^{q_*}}^{1-\theta},
\end{align}
which, combined with  Lemma \ref{lemma2.2}, implies
\begin{align*}
\|\rho\dot w\|_{L^q}
 \le C\|\sqrt\rho\dot w\|_{L^2}^{\theta}
\Bigl(\|\sqrt\rho\dot w\|_{L^2}
+\|\nabla\dot w\|_{L^2}\Bigr)^{1-\theta}.
\end{align*}
Set
\[
A(t)=\|\sqrt\rho\dot w(t)\|_2,
\qquad B(t)=\|\nabla\dot w(t)\|_2.
\]
Using Proposition~\ref{prop1} and \eqref{3.3},
\[
A\in L^2(0,T),\qquad
\sup_{0<t\le T}tA(t)^2\le C_T,
\qquad
\int_0^T tB(t)^2\,dt\le C_T.
\]
If $\theta>\frac{2(p-1)}{p}$,
then H\"{o}lder's inequality yields
\[
\int_0^T
\bigl(A^\theta B^{1-\theta}\bigr)^{p}\,dt
\le C_T.
\]
Consequently,
\begin{equation}\label{5.108xin}
\int_0^T\|\rho\dot w\|_{L^q}^{p}\,dt
\le C(p,q,T).
\end{equation}
 Next, invoking the Sobolev embedding $W^{1,q}(\mathbb{T}^2) \hookrightarrow L^\infty(\mathbb{T}^2)$  for  $q>2$ together with \eqref{ding2.2}, \eqref{3.5}, \eqref{5.18_2} and \eqref{N8w},  one deduces that for any $2 < q < \infty$ and $1<p<\frac{q}{q-1}$,
\begin{align}\label{5.110}
\int_{0}^{T}
\|w\|^{p}_{L^{\infty}}dt
\leq C \int_{0}^{T}
(
\|w\|^{p}_{L^2}+\|\nabla w\|^{p}_{L^q}) dt
\leq C.
\end{align}
Combining   \eqref{5.108}, \eqref{5.108xin} and \eqref{5.110}, we  complete   the proof of \eqref{5.104}.
\end{proof}

\section{Proof of Theorems \ref{result1} and \ref{result2} }

With the complete set of a priori estimates established in Section 3 at hand, we are now in a position to prove Theorems \ref{result1}--\ref{result2}. At the beginning of this section, we introduce an approximate sequence $(\rho^{\delta},u^{\delta}, w^{\delta})$ corresponding to the regularized initial data $(\rho^{\delta}_0,u^{\delta}_0, w^{\delta}_0)$.

The original initial data satisfy the following regularity assumptions:
\begin{align*}
    \rho_0\in L^{\infty}(\mathbb{T}^2)\quad \text{and} \quad u_0,w_0\in H^1(\mathbb{T}^2).
\end{align*}
To ensure strict positivity of the density while preserving the unit total mass, we proceed as follows. For $\delta \in (0, 1)$, define
\begin{align*}
\tilde{\rho}_0^\delta = \max\{\rho_0, \delta\} \quad \text{and then set} \quad \check{\rho}_0^\delta = \min\{\zeta^\delta, \tilde{\rho}_0^\delta\},
\end{align*}
where the normalizing constant $\zeta^\delta \geq 1$ is chosen such that
\begin{align*}
\int_{\mathbb{T}^2} \check{\rho}_0^\delta \, dx = 1.
\end{align*}
It is straightforward to verify that $\zeta^\delta \to   \|\rho_0\|_{L^\infty}$ as $\delta \to 0$. This construction immediately yields uniform bounds and pointwise convergence:
\begin{align*}
\delta \leq \check{\rho}_0^\delta \leq  \|\rho_0\|_{L^\infty}, \quad   \quad \check{\rho}_0^\delta \to \rho_0 \ \text{a.e.}
\end{align*}
The bounded approximate density $\check{\rho}_0^\delta$ and the original velocity fields are subsequently mollified via convolution with a family of positive smooth mollifiers  $(\pi^\delta)_{\delta>0}$:
\begin{align*}
\rho_0^\delta = \pi^\delta * \check{\rho}_0^\delta \quad  \quad u_0^\delta = \pi^\delta * u_0 \quad \text{and} \quad w_0^\delta = \pi^\delta * w_0.
\end{align*}
For any finite $T > 0$, the approximate solutions constructed above satisfy the a priori estimates \eqref{3.5}, \eqref{3.33}, and \eqref{3.3}, with a constant $C$ independent of $\delta$. Passing to the limit $\delta \to 0$ and employing classical compactness arguments from \cite{R.Danchin5,R.Danchin2}, we conclude that the Cauchy problem \eqref{0}--\eqref{eq:IC} admits a global  solution $(\rho, u, w)$ satisfying \eqref{1.11}.

We now establish the uniqueness of the solution $(\rho, u, w)$ constructed in Theorem \ref{result1} under the linear pressure law $P= \rho$, i.e.  for the case $\gamma=1$.  

Suppose  $(\rho,u,w)$ and $(\bar{\rho},\bar{u},\bar{w})$ are two   solutions to the Cauchy problem \eqref{0}--\eqref{eq:IC} on $[0,T_0]\times\mathbb{T}^2$ for any $T_0 \in (0,T]$ satisfying \eqref{1.11} with the same initial data. Combining Propositions \ref{prop1}--\ref{prop3} and Corollary \ref{cor3.1}, we obtain 
$u,w,\bar{u},\bar{w}\in L^{\infty}(0,T_0;H^1(\mathbb{T}^2))$,  $\sqrt{t}\nabla \dot{u}, \sqrt{t}\nabla \dot{w}, \sqrt{t}\nabla \dot{\bar{u}}, \sqrt{t}\nabla \dot{\bar{w}}\in L^2(0,T_0;L^2(\mathbb{T}^2))$, $\sqrt{t\rho}\dot{u}, \sqrt{t\rho} \dot{w}, \sqrt{t\bar{\rho}} \dot{\bar{u}}$ and $\sqrt{t\bar{\rho}} \dot{\bar{w}}$   in $L^{\infty}(0,T_0;L^2(\mathbb{T}^2))$,
\begin{align*}
 \nabla \bar{u},\nabla \bar{w} \in L^2(0,T_0;L^3(\mathbb{T}^2))\quad \text{and}\quad \int_0^{T_0}(1+|\log t|)\|\nabla \bar{u}\|_{\mathrm{BMO}}\,d t< \infty.
\end{align*}

 We first define the difference quantities
$\delta\rho:=\rho-\bar{\rho}, \quad \delta u:=u-\bar{u}\quad \text{and} \quad \delta w:=w-\bar{w}$. These satisfy the error system
\begin{align}\label{5.2}
\begin{cases}
\delta \rho_t + \operatorname{div}\left( \delta \rho \bar{u} + \rho \delta u \right) = 0, \\
\rho \dot{\delta u}- (\mu+\xi) \Delta \delta u - (\lambda + \mu-\xi) \nabla \operatorname{div} \delta u + \nabla \delta \rho+2\xi \nabla^{\perp}(\delta w)= -(\delta \rho \dot{\bar{u}} + \rho \delta u \cdot \nabla \bar{u}),\\
\rho\dot{\delta w}-\mu_2\Delta \delta w+4\xi\delta w-2\xi \nabla^{\perp}\cdot(\delta u)=-(\delta \rho \dot{\bar{w}} + \rho \delta u \cdot \nabla \bar{w}),
\end{cases}
\end{align}
where $\dot{\delta u}:=\delta u_t+u\cdot \nabla \delta u$ and analogously for $\dot{\delta w}$.

We aim to show that $\delta \rho \equiv 0$, $\delta u \equiv 0$ and $\delta w \equiv 0$ by establishing estimates for $\delta \rho$ in $L^\infty(0, T; \dot{H}^{-1})$, and for $\sqrt{\rho} \delta u$ and $\sqrt{\rho} \delta w$ in $L^2(0,T \times \mathbb{T}^2)$. To this end, introduce $\kappa \coloneqq -(-\Delta)^{-1} \delta \rho$, which  is well-defined since $\int  \delta \rho \, dx = 0$. Observe the norm equivalence
\begin{align}\label{7.3}
\|\nabla \kappa\|_{L^2} = \|\delta \rho\|_{\dot{H}^{-1}} \simeq \|\delta \rho\|_{H^{-1}}.
\end{align}
 Multiplying  the first equation of \eqref{5.2} by $\kappa$, we derive
\begin{align*}
\frac{1}{2} \frac{d}{dt} \|\nabla \kappa\|_{L^2}^2 \leq \left| \int  \left( \bar{u} \cdot \nabla \kappa \, \delta \rho + \rho \, \delta u \cdot \nabla \kappa \right) dx \right|.
\end{align*}
We bound the two integrals separately. For the second term, H\"{o}lder's inequality gives
\begin{align*}
\left| \int  \rho \delta u \cdot \nabla \kappa \, dx \right| \leq \sqrt{\rho^*} \, \|\sqrt{\rho} \delta u\|_{L^2} \|\nabla \kappa\|_{L^2}.
\end{align*}
For the first term, using Einstein summation and integration by parts,
\begin{align*}
\int  \bar{u} \cdot \nabla \kappa \, \delta \rho \, dx &= \int  \bar{u}^j \partial_j \kappa \Delta \kappa \, dx
 = -\int  \partial_k \bar{u}^j \, \partial_j \kappa \partial_k \kappa \, dx + \frac{1}{2} \int \operatorname{div} \bar{u} \, |\nabla \kappa|^2 \, dx.
\end{align*}
By  the div-curl structure and   $\mathcal{H}^1$-$\text{BMO}$ duality,
\begin{align}\label{10.4}
\left| \int  \bar{u} \cdot \nabla \kappa \, \delta \rho \, dx \right| \leq C \|\nabla \bar{u}\|_{\text{BMO}} \|\nabla \kappa \otimes \nabla \kappa\|_{\mathcal{H}^1}.
\end{align}
Applying the logarithmic interpolation inequality for Hardy spaces (cf. \cite{Mucha2010}, Theorem D)
\begin{align}\label{7.4}   \|f\|_{\mathcal{H}^1}\leq C\|f\|_{L^1}(|\log\|f\|_{L^1}|+\log \left(e+\|\nabla \kappa\|_{L^{\infty}}^2)\right),
\end{align}
we infer from \eqref{10.4} that
\begin{align}\label{7.5}
    \left|\int \bar{u}\cdot\nabla \kappa\delta \rho\, dx\right|\leq C\|\nabla \bar{u}\|_{\mathrm{BMO}}\|\nabla \kappa\|_{L^2}^2\left(|\log \|\nabla \kappa\|_{L^2}^2|+\log(e+\|\nabla \kappa\|^2_{L^{\infty}})\right).
\end{align}
From the uniform bound $\rho\leq\rho^*$, we have
$\|\nabla \kappa(t)\|_{L^{\infty}}\leq C\rho^*$.
Substituting this into \eqref{7.5},  there exists a constant $C$ depending only on $\rho^*$ such that
\begin{align}\label{7.6}
\frac{1}{2} \frac{d}{dt} \|\nabla \kappa\|_{L^2}^2 \leq C \left( \|\sqrt{\rho} \delta u\|_{L^2} + \|\nabla \bar{u}\|_{\text{BMO}} \|\nabla \kappa\|_{L^2} \left(1 + \left| \log \|\nabla \kappa\|_{L^2} \right| \right) \right) \|\nabla \kappa\|_{L^2}.
\end{align}
Combined with  $\|\nabla \kappa(0)\|_{L^2} = 0$, integrating \eqref{7.6} over time yields for all $t \in [0, T_0]$,
\begin{align*}
\|\nabla \kappa(t)\|_{L^2} \leq C  \int_0^t \|\sqrt{\rho} \delta u\|_{L^2} \, d\tau + C\int_0^t \|\nabla \bar{u}\|_{\text{BMO}} \|\nabla \kappa\|_{L^2} \left(1 + \left| \log \|\nabla \kappa\|_{L^2} \right| \right) d\tau.
\end{align*}
Define $\mathcal{J}(t):=\sup_{\tau\leq t}\tau^{-\frac{1}{2}}\|\delta \rho(\tau)\|_{\dot{H}^{-1}(\mathbb{T}^2)}$. Using
\eqref{7.3} and Cauchy-Schwarz inequality for any $T\in[0,T_0]$,
\begin{equation}\label{7.7}
\mathcal{J}(T) \leq C \left(
\int_0^T \|\nabla \bar{u}\|_{\text{BMO}} \, \mathcal{J}(\tau) \bigl(1 + |\log \tau| + |\log \mathcal{J}(\tau)|\bigr) d\tau +
\bigl\| \sqrt{\rho} \delta u \bigr\|_{L^2(0,T \times \mathbb{T}^2)}
\right).
\end{equation}

Unlike the Navier--Stokes equations \cite{R.Danchin2}, the micropolar fluid model needs to control both the velocity difference $\delta u$ and the micro-rotation velocity difference $\delta w$. To this end, we introduce the following backward parabolic system:
\begin{equation}\label{7.8}
\begin{cases}
\rho \psi_t + \rho u \cdot \nabla \psi + (\mu + \xi) \Delta \psi + (\mu + \lambda - \xi) \nabla \operatorname{div} \psi - 2\xi \nabla^{\perp} \chi = -\rho \delta u, \\
\rho \chi_t + \rho u \cdot \nabla \chi - 4\xi \chi +\mu_2 \Delta \chi + 2\xi \nabla^\perp \cdot \psi = -\rho \delta w, \\
\psi \big|_{t=T} = 0,\quad \chi \big|_{t=T} = 0,
\end{cases}
\end{equation}
where $ \nabla^{\perp} \chi:= (-\partial_2 \chi, \partial_1 \chi)$ and $
\nabla^\perp \cdot \psi := \partial_1 \psi_2 - \partial_2 \psi_1$.  Testing the first equation in
\eqref{7.8} against $\psi$ and the second against $\chi$, summing the resulting identities yields
\begin{align}\label{5.123}
    \sup_{t\in(0,T)}\int (\rho|\psi|^2 +\rho|\chi|^2)(t) dx + &\int_{0}^{T}\int \left( \mu|\nabla^{\perp} \cdot \psi|^2 + \nu(\text{div}\ \psi)^2+\mu_2|\nabla \chi|^2+\xi|\nabla^{\perp}\cdot\psi-2\chi|^2 \right) dx dt\nonumber \\
    &\leq \int_{0}^{T}\left(\| \sqrt{\rho}\delta u\|_{L^2}^2+\|\sqrt{\rho}\delta w\|_{L^2}^2\right)\, dt.
\end{align}
Rewriting  \eqref{7.8}, we obtain
\begin{align*}
    (\mu+\xi)\Delta \psi +(\mu+\lambda-\xi)\nabla\operatorname{div}\psi =-\rho \delta u-\rho \dot{\psi}+2\xi \nabla^{\perp}  \chi
\end{align*}
and
   \begin{align*}
    &\mu_2\Delta \chi=-\rho \delta w-\rho \dot{\chi}+4\xi \chi-2\xi \nabla^{\perp}\cdot\psi.
\end{align*}
Then, applying standard elliptic estimates for the Hodge decomposition, we derive
\begin{align}\label{phi5}
(\mu+\xi)^2\|\nabla(\nabla^{\perp}\cdot\psi)\|_{L^2}^2+\nu^2\|\nabla \operatorname{div} \psi\|_{L^2}^2&= \|\rho \delta u+\rho \dot{\psi}-2\xi \nabla^{\perp}\chi\|_{L^2}^2\nonumber\\
  &  \leq 2\rho^*\|\sqrt{\rho}(\dot{\psi}+\delta u)\|_{L^2}^2+8\xi^2\|\nabla^{\perp} \chi\|_{L^2}^2,
\end{align}
and
\begin{align}\label{chi5}
    \mu_2^2\|\Delta \chi\|_{L^2}^2\leq3(\rho^*\|\sqrt{\rho}(\delta w+\dot{\chi})\|_{L^2}^2+16\xi^2\|\chi\|_{L^2}^2+4\xi^2\|\nabla^{\perp}\cdot \psi\|_{L^2}^2).
\end{align}
Next,  testing \eqref{7.8}$_{1}$ by $\psi_t$ and \eqref{7.8}$_{2}$ by $\chi_t$, and combining  with \eqref{phi5}-\eqref{chi5}, we obtain
\begin{align}\label{7.9}
&\sup_{0\leq t\leq T}
\left(
\mu\|\nabla ^{\perp}\cdot\psi\|_{L^2}^2+\mu_2\|\nabla\chi\|_{L^2}^2 + \nu\|\mathrm{div}\, \psi\|_{L^2}^2
+\|\sqrt{\rho}\psi\|^2_{L^2}+\|\sqrt{\rho}\chi\|^2_{L^2}+\xi\|2\chi-\nabla^{\perp}\cdot\psi\|_{L^2}^2\right)\nonumber\\
&\quad+
\int_0^T
\left(
\mu^2\|\nabla(\nabla^{\perp} \cdot\psi)\|_{L^2}^2 + \nu^2\|\nabla\mathrm{div}\, \psi\|_{L^2}^2 + \|\sqrt{\rho}\psi_t\|_{L^2}^2+  \|\sqrt{\rho}\chi_t\|_{L^2}^2+\mu_2^2\|\Delta \chi\|_{L^2}^2
\right)
dt\nonumber\\
\leq& C_T
\int_0^T
\int
\left(\rho|\delta u|^2 + \rho|u \cdot \nabla \psi|^2+\rho|\delta w|^2 + \rho|u \cdot \nabla \chi|^2\right) dx dt,
\end{align}
with $C_T$ depending only on the solution norms.
By direct calculations, we have
\begin{align*}
    \int \rho|u\cdot \nabla \psi|^2\,dx&\leq \sqrt{\rho^*}\|\rho^{\frac{1}{4}}u\|_{L^4}^2\|\nabla \psi\|_{L^4}^2\leq \sqrt{\rho^*}\|\rho^{\frac{1}{4}}u\|_{L^4}^2\|\nabla \psi\|_{L^2}\|\nabla^2 \psi\|_{L^2}\\
    &\leq \epsilon \|\nabla^2 \psi\|_{L^2}^2+C\|\rho^{\frac{1}{4}}u\|_{L^4}^4\|\nabla \psi\|_{L^2}^2,
\end{align*}
and analogously for $\chi$,
    \begin{align*}
    \int \rho|u\cdot \nabla \chi|^2\,dx\leq \epsilon \|\nabla^2 \chi\|_{L^2}^2+C\|\rho^{\frac{1}{4}}u\|_{L^4}^4\|\nabla \chi\|_{L^2}^2.
\end{align*}
Consequently, by using \eqref{5.123}, \eqref{7.9} and the regularity properties of $u,w$, we have  
\begin{align}\label{7.10}
&\sup_{0\leq t\leq T}
\left(
\mu\|\nabla ^{\perp}\cdot\psi\|_{L^2}^2 + \mu_2\|\nabla\chi\|_{L^2}^2+\nu\|\mathrm{div}\, \psi\|_{L^2}^2
+\|\sqrt{\rho}\psi\|^2_{L^2}+\|\sqrt{\rho}\chi\|^2_{L^2}\right)\nonumber\\
&\quad+
\int_0^T
\left(
\mu^2\|\nabla(\nabla^{\perp}\cdot \psi)\|_{L^2}^2 + \nu^2\|\nabla\mathrm{div}\, \psi\|_{L^2}^2 + \|\sqrt{\rho}\psi_t\|_{L^2}^2+  \|\sqrt{\rho}\chi_t\|_{L^2}^2+\mu_2^2\|\Delta \chi\|_{L^2}^2
\right)
dt\nonumber\\
&\quad+\int_{0}^{T} \left( \mu\|\nabla^{\perp} \cdot \psi\|_{L^2}^2 + \nu\|\text{div}\ \psi\|_{L^2}^2+\mu_2\|\nabla \chi\|_{L^2}^2+\xi\|\nabla^{\perp}\cdot\psi-2\chi\|_{L^2}^2 \right) dt\nonumber\\
\leq& C_T
\int_0^T
\int
\left(\rho|\delta u|^2 +\rho|\delta w|^2\right) dx dt.
\end{align}
Next, we  estimate all terms on the right-hand side of \eqref{7.10}. Testing  \eqref{5.2}$_2$ by $\psi$ and \eqref{5.2}$_3$ by $\chi$, and then summing the resulting equalities, we arrive at
\begin{align}\label{7.11}
 &\int_0^T \int(
\rho |\delta u|^2+\rho |\delta w|^2 )dx dt -
\int_0^T \int
\delta\rho \, \mathrm{div}\, \psi \, dx dt  \nonumber\\
\leq&
\int_0^T \int
\left(
\delta\rho  \dot{\bar{u}} \cdot   \psi + \rho (\delta u \cdot \nabla \bar{u}) \cdot \psi
\right) dx dt
 +\int_0^T \int
\left(
\chi\delta\rho \, \dot{\bar{w}}  + \chi\rho (\delta u \cdot \nabla \bar{w})   
\right) dx dt.
\end{align}
We may treat each term in \eqref{7.11} using the same argument employed in \cite{R.Danchin2}. Consequently, we  deduce 
\begin{align}\label{5.1235}
&\left\| \sqrt{\rho}\delta u \right\|_{L^2(0,T; L^2)}^2+\left\| \sqrt{\rho}\delta w \right\|_{L^2(0,T; L^2)}^2
\leq C T^{1/3}
\left(
 \left\| \sqrt{\rho}\delta u \right\|_{L^2(0,T; L^2)}^2
+ \left\| \sqrt{\rho}\delta w \right\|_{L^2(0,T; L^2)}^2
\right)\nonumber\\
&\qquad+C T^{1/3}
\left(
\left\| t^{-1/2}\delta\rho \right\|_{L^\infty(0,T; \dot{H}^{-1})}\left(
\left\| \sqrt{\rho}\delta u \right\|_{L^2(0,T; L^2)}+\left\| \sqrt{\rho}\delta w \right\|_{L^2(0,T; L^2)}\right)
\right).
\end{align}
By definition of $\mathcal{J}(T)$, for sufficiently small $T$, \eqref{5.1235}  reduces to
\begin{align}\label{7.17}
    \left\| \sqrt{\rho}\delta u \right\|_{L^2(0,T; L^2)} +&\left\| \sqrt{\rho}\delta w \right\|_{L^2(0,T; L^2)} \leq CT^{1/3}\mathcal{J}(T).
\end{align}
Inserting  \eqref{7.17} into \eqref{7.7} yields the logarithmic integral inequality
\begin{align*}
    \mathcal{J}(t) \leq C_T \int_0^t
\left(1 + \|\nabla \bar{u}(\tau)\|_{\mathrm{BMO}} \left(1 + |\log \tau| + |\log \mathcal{J}(\tau)|\right)\right) \mathcal{J}(\tau) d\tau, \qquad\forall\,t\in[0,T].
\end{align*}
 By the classical Osgood's lemma (\cite{Bahouri2011}, Lemma 3.4), we conclude that $\mathcal{J}\equiv 0$ on $[0,T]$,
 whence $\delta \rho \equiv 0$. In view of \eqref{7.17}, this further implies $\sqrt{\rho}\delta u \equiv0$ and $\sqrt{\rho}\delta w \equiv0$ on $[0,T]$.

Now, since $\sqrt{\rho}\delta u =0$, $\sqrt{\rho}\delta w =0$ and $\delta \rho=0$, the  second and third equations in \eqref{5.2} simplify  to
\begin{align*}
\begin{cases}
\rho \dot{\delta u}- (\mu+\xi) \Delta \delta u - (\lambda + \mu-\xi) \nabla \operatorname{div} \delta u +2\xi \nabla^{\perp}(\delta w)= 0,\\
\rho \dot{\delta w}+4\xi\delta w-\mu_2\Delta \delta w-2\xi \nabla^{\perp}\cdot(\delta u)=0.
\end{cases}
\end{align*}
The direct energy estimate then implies
\begin{align*}
  &  \frac{1}{2}\|\sqrt{\rho}\delta u\|^2_{L^{\infty}(0,T;L^2)}+\frac{1}{2}\|\sqrt{\rho}\delta w\|^2_{L^{\infty}(0,T;L^2)}\\
   &\quad +\int_0^T(\mu\|\nabla^{\perp}\cdot \delta u\|_{L^2}^2+\nu\|\operatorname{div}\delta u\|_{L^2}^2+\mu_2\|\nabla \delta w\|_{L^2}^2+\xi\|2\delta w-\nabla^{\perp}\cdot \delta u\|_{L^2}^2)\, dt= 0,
\end{align*}
which in turn yields $\delta u=0$ and $\delta w=0$ on $[0,T]$. The uniqueness of the solution thus follows.

\bigbreak

The proof of Theorem \ref{result2} follows a compactness argument similar to Theorem \ref{result1}. For this reason, we only sketch the key steps.

\bigbreak

\noindent \textbf{Proof of Theorem \ref{result2}.}
We now outline the convergence of solutions to  \eqref{0} toward those of the incompressible micropolar system \eqref{Incom} as $\nu \to \infty$, which completes the proof of  Theorem~\ref{result2}. The argument relies on the compactness method developed in the proof of Theorem 2.3 in \cite{R.Danchin2}.
 The whole analysis is based on the uniform estimates derived from Proposition \ref{prop1}.  In particular,
    Proposition \ref{prop1} implies that the sequence $(\nabla \mathcal{G}^\nu)$ is bounded in $L^2(\mathbb{R}_+ \times \mathbb{T}^2)$, while the velocity and micro-rotation fields $(u^\nu,w^{\nu})$ are uniformly bounded in $L^\infty(\mathbb{R}_+; H^1(\mathbb{T}^2))$. Meanwhile, Proposition \ref{prop2} ensures that the density sequence  $(\rho^\nu)$ is bounded in
$L^\infty(\mathbb{R}_+ \times \mathbb{T}^2)$. Consequently, there exists a subsequence $(\rho^{\nu_n}, u^{\nu_n},w^{\nu_n})$ of $(\rho^\nu,u^\nu,w^{\nu})$  and limit functions $(\eta,v,w)$ such that
\[
\rho^{\nu_n} \overset{*}{\rightharpoonup} \eta \quad \text{in} \ L^\infty(\mathbb{R}_+ \times \mathbb{T}^2),\quad  u^{\nu_n} \overset{*}{\rightharpoonup} v \quad \text{in} \ L^\infty(\mathbb{R}_+; H^1(\mathbb{T}^2))
\quad \text{and} \quad
w^{\nu_n} \overset{*}{\rightharpoonup} w \quad \text{in} \ L^\infty(\mathbb{R}_+; H^1(\mathbb{T}^2)).
\]
To upgrade weak convergence to strong convergence, we use the time-weighted estimates from Proposition \ref{prop3}. More precisely, for $\nu\geq \nu_0$ and any $T>0$,
 \begin{align*}
\sup_{0\leq t\leq T}t &\left (\bigl\|\sqrt{\rho^{\nu}}\,\dot{u}^{\nu}(t)\bigr\|_{L^2}^2+\bigl\|\sqrt{\rho^{\nu}}\,\dot{w}^{\nu}(t)\bigr\|_{L^2}^2\right)
\\
&+ \int_0^T t\left(\mu\bigl\|\nabla^\perp\cdot\dot{u}^{\nu}\bigr\|_{L^2}^2 + \nu\bigl\|\dot{Z}^{\nu}\bigr\|_{L^2}^2+\mu_2\|\nabla \dot{w}^{\nu}\|_{L^2}^2\right)dt\leq C(T),
\end{align*}
where $Z := \operatorname{div}u$ and $\dot{Z} := \operatorname{div}u_t + u\cdot\nabla\operatorname{div}u$. This implies that  $(\sqrt{t}u^{\nu}_t)$ and $(\sqrt{t}w^{\nu}_t)$ are bounded in $L^2(0,T;L^2)$. 
Fix $0<\delta<T$. By Proposition~\ref{prop3} and Lemma~\ref{lemma2.2},
\[
\dot u^\nu,\ \dot w^\nu
\quad\text{are bounded in }L^2(\delta,T;L^2(\mathbb T^2)),
\]
and the uniform $H^1$ bounds imply
\[
u_t^\nu=\dot u^\nu-u^\nu\cdot\nabla u^\nu,
\qquad
w_t^\nu=\dot w^\nu-u^\nu\cdot\nabla w^\nu
\]
are bounded in $L^{4/3}(\delta,T;H^{-1}(\mathbb T^2))$.
Since $u^\nu,w^\nu$ are bounded in
$L^2(\delta,T;H^1(\mathbb T^2))$, the Aubin--Lions lemma yields,
after extraction,
\[
(u^{\nu_n},w^{\nu_n})\to(v,w)
\quad\text{in }L^2(\delta,T;L^2(\mathbb T^2)).
\]
Moreover, the uniform $L^\infty(0,T;L^2)$ bounds give
\[
\int_0^\delta\|(u^{\nu_n}-v,w^{\nu_n}-w)\|_{L^2}^2\,dt\le C\delta.
\]
Letting $\delta\downarrow0$ and using a diagonal argument, we obtain
\[
(u^{\nu_n},w^{\nu_n})\to(v,w)
\quad\text{in }L^2_{\rm loc}(\mathbb R_+;L^2(\mathbb T^2)).
\]
Since
\[
\overline{\mathcal G^\nu}=-\overline{P(\rho^\nu)},
\qquad
|\overline{\mathcal G^\nu}|\le P^*,
\]
and Proposition~\ref{prop1} gives
$\nabla\mathcal G^\nu$ bounded in $L^2(0,T;L^2)$,
Poincar\'e inequality yields
\[
\|\mathcal G^\nu\|_{L^2(0,T;H^1)}\le C_T.
\]
Hence, after extraction,
\[
\nabla\mathcal G^{\nu_n}\rightharpoonup-\nabla\Pi
\quad\text{in }L^2(0,T;L^2(\mathbb T^2)).
\]
Combining all the above convergence results, we pass to the limit in the system and obtain
\begin{align*}
&\partial_t\eta +\operatorname{div}(\eta v)=0,\\
&\partial_t(\eta v)+\operatorname{div}(\eta v\otimes v)-(\mu+\xi)\Delta v+\nabla \Pi = -2\xi\nabla^{\perp}w, \quad\text{and}\\
&\partial_t(\eta w)  + \operatorname{div}(\eta v   w) + 4\xi w = \mu_2\Delta w + 2\xi\nabla^{\perp}\cdot v.
\end{align*}
Since $\operatorname{div}u^{\nu_n}\to0$ in
$L^2(0,T;L^2(\mathbb T^2))$, the limit velocity satisfies
$\operatorname{div}v=0$. Passing to the limit in the weak
formulation and using
$\nabla\mathcal G^{\nu_n}\rightharpoonup-\nabla\Pi$, we conclude
that $(\eta,v,w,\Pi)$ solves \eqref{Incom} with the prescribed
initial data. By the regularization techniques of \cite{R.Danchin5}, every sequence $\nu_n\to\infty$ contains a
subsequence for which all the convergences stated in Theorem~\ref{result2} hold. This proves Theorem~\ref{result2}. \qed

 \section*{Acknowledgement}
The authors declare that no funds, grants, or other support were received
during the preparation of this manuscript.

\section*{Declarations}

\noindent {\textbf{ Conflict of interest.} } The authors declare that they have no conflict of interest.

\noindent {\textbf{Data availability statement.} }No datasets were generated or analyzed during the current study.

\noindent\textbf{Clinical trial number.}
Not applicable. This article does not involve a clinical trial.

\vspace{3ex}

\vspace{3ex}

(Zijian Du)\par\nopagebreak
\noindent\textsc{School of Mathematics and Statistics, Northwestern Polytechnical University, Xi'an 710129, P.R. China}

Email address: \texttt{duzijian@mail.nwpu.edu.cn}

\vspace{3ex}

(Sen Yang)\par\nopagebreak
\noindent\textsc{School of Mathematics and Statistics, Northwestern Polytechnical University, Xi'an 710129, P.R. China}

Email address: \texttt{senyang@mail.nwpu.edu.cn}

 \vspace{3ex}

(Xinglong You)\par\nopagebreak
\noindent\textsc{School of Mathematics and Statistics, Northwestern Polytechnical University, Xi'an 710129, P.R. China}

 Email address: \texttt{xinglongyou@mail.nwpu.edu.cn}

\end{document}